\numberwithin{equation}{section}
\renewenvironment{proof}{\noindent{\sffamily{\textbf{Proof :}}}}{\begin{flushright}$\square$\end{flushright}}
\newcommand{\IN}{\mathbb{N}}
\newcommand{\IZ}{\mathbb{Z}}
\newcommand{\IR}{\mathbb{R}}
\newcommand{\IC}{\mathbb{C}}
\newcommand{\IT}{\mathbb{T}}
\newcommand{\drm}{\mathrm d}
\newcommand{\CS}{\mathcal S}
\newcommand{\CT}{\mathcal T}
\newcommand{\CE}{\mathcal E}
\newcommand{\Id}{\mathrm{Id}}
\renewcommand{\Re}{\mathrm{Re}}
\newcommand{\enstq}[2]{\left\{#1~\middle|~#2\right\}}
\newcommand{\eps}{\varepsilon}
\tikzset{baseline,tdot/.style={circle,draw,inner sep=1pt,fill},ndot/.style={circle,draw,inner sep=1pt},ndotd/.style={circle,draw,inner sep=0.5pt},cdot/.style={circle split,draw,inner sep=0.7pt},ddot/.style={circle split,draw,inner sep=0.7pt,rotate=90},tree/.style={inner sep=1pt},odot/.style={shape=coordinate,circle,draw,inner sep=0.05pt,fill},rdot/.style={shape=coordinate,rectangle,toon,draw,inner sep=1pt,outer sep=1.5pt,fill},gdot/.style={shape=coordinate,circle,draw,inner sep=1pt,outer sep=1.5pt,fill},ngdot/.style={shape=coordinate,circle,draw,inner sep=1pt,outer sep=1.5pt},pi/.style={decorate,decoration={zigzag,amplitude=2pt,pre length=4pt, post length=4pt,segment length=3pt}}}
\newcommand{\cut}{t}
\definecolor{ocre}{RGB}{64,123,121}
\newcounter{item}
\numberwithin{item}{section}
\newtheorem{theorem}[item]{\sffamily Theorem}
\newtheorem{definition}[item]{\sffamily Definition}
\newtheorem{proposition}[item]{\sffamily Proposition}
\newtheorem{lemma}[item]{\sffamily Lemma}
\newtheorem{corollary}[item]{\sffamily Corollary}
\newtheorem*{theorem*}{\sffamily Theorem}
\newtheorem*{definition*}{\sffamily Definition}
\newtheorem*{proposition*}{\sffamily Proposition}
\newtheorem*{lemma*}{\sffamily Lemma}
\newtheorem*{corollary*}{\sffamily Corollary}
\titleformat{\section}{\centering\Large\bfseries}{\thesection \ --}{0.7em}{\Large\bfseries #1}
\titleformat{\subsection}{\centering\large\bfseries}{\thesubsection \ --}{0.4em}{\large\bfseries #1}
\titleformat{\subsubsection}{\centering\bfseries}{\thesubsubsection \ --}{0.4em}{\bfseries #1}
\let\emph\relax
\DeclareTextFontCommand{\emph}{\bfseries\em}
\title{\bfseries High order uniform in time schemes for weakly nonlinear Schrödinger equation and wave turbulence}
\author{\textsc{Quentin CHAULEUR} and \textsc{Antoine MOUZARD}}
\date{}
\begin{document}

\maketitle
\abstract{We introduce two multiscale numerical schemes for the time integration of weakly nonlinear Schrödinger equations, built upon the discretization of Picard iterates of the solution. These high-order schemes are designed to achieve high precision with respect to the small nonlinearity parameter under particular CFL condition. By exploiting the scattering properties of these schemes thanks to a low-frequency projected linear flow, we also establish its uniform accuracy over long time horizons. Numerical simulations are provided to illustrate the theoretical results, and these schemes are further applied to investigate dynamics in the framework of wave turbulence.}
\vspace{0.5cm}

\section{Introduction}

We consider the nonlinear Schrödinger equation
\begin{equation}\tag{NLS}\label{NLS}
i\partial_t u+\Delta u=\eps|u|^{p-1} u
\end{equation}
with initial data $u(0)=\varphi$ on the full space $\IR^d$ for $1 \leq d \leq 3$. We restrict our attention to defocusing nonlinearity with odd exponents $p\in 2\IN +1$, with furthermore assumption $p\ge5$ in dimension $d=1$ and only the cubic case $p=3$ for $d=3$. In our framework, the nonlinearity strength $\eps>0$ is considered small, putting ourselves in the so-called \textit{weakly nonlinear} regime. Our goal is to investigate and design semi-discrete in time schemes which can capture the multiscale behavior of equation \eqref{NLS} with respect to this small nonlinear strength. Moreover, such schemes will prove to have uniform in time error, that is independent of $T>0$ the horizon time.

\smallskip

The nonlinear Schrödinger equation is a well-studied fundamental model, with a lot of physical applications including Bose-Einstein condensation or nonlinear optics. Our motivation here lies in the simulation of wave turbulence phenomenon, which can be observed when a large number of nonlinearly interacting waves of varying wavelengths propagate in multiple directions, such as in oceanography for which equation \eqref{NLS} stands as a toy model. For the past years, this topic has been the subject of intense mathematical activites with different directions, see for instance the work of \textsc{Deng} and \textsc{Hani} \cite{DengHani2023} and references therein. The usual setting is to consider equation~\eqref{NLS} on a large torus $\IT_L^d$ in the limit $L\gg1$ and $\eps\ll1$ with appropriate scaling laws. On the other hand, our framework is motivated by the recent work of \textsc{Faou} and \textsc{Mouzard}~\cite{FaouMouzard2024} which consider the full space $\IR^2$ with small initial data in the weighted space
\begin{equation}
\Sigma \coloneqq \enstq{\varphi(x)\in H^1(\IR^2)}{\ |x|\varphi(x)\in L^2(\IR^2)}
\end{equation}
with a structure that mimics the large torus $\IT_L^2$. This space is natural in the context of the nonlinear Schrödinger equation to obtain scattering results, which roughly states that the solution behaves in large times as a solution of the free equation. 

\smallskip 

In the different approaches for wave turbulence problems, the main idea is to iterate the Duhamel formulation of the equation to get an expansion of the solution with respect to $\eps>0$, which gives the formal series
\begin{equation}\label{expansioneps}
u(t,x)=\sum_{n\ge0}\eps^nU_n(t,x)
\end{equation}
where $U_n$ is a $(p+(p-1)n)$-linear functional of the initial data. For instance, \textsc{Deng} and \textsc{Hani} considered in \cite{DengHani2023} such arbitrary high-order expansion for the cubic equation $p=3$ in dimension $d\ge3$. They identify in the large number of terms the main contributions that lead to a kinetic description of the covariances of the Fourier modes for random initial data. In a different direction, \textsc{Faou} and \textsc{Mouzard} consider in \cite{FaouMouzard2024} first and second order expansions to identify a kinetic operator for both deterministic and random initial data. In this work, we propose a numerical scheme that captures this multiscale structure with respect to $\eps>0$ for initial data in weighted Sobolev spaces~$\Sigma\cap H^\sigma$ with $\sigma$ large enough. In particular, the terms $U_n$ are defined by a recursive formula and have a tree-like structure that we exploit to construct arbitrary high order schemes. Plugging the series \eqref{expansioneps} into equation \eqref{NLS}, the first order iterate corresponds to the linear equation
\[ i\partial_t U_0 + \Delta U_0 = 0  \]
with initial condition $U_0(0)=\varphi$, which solution is explicitly given by the linear flow $U_0(t)=e^{it\Delta}\varphi$. Following iterates then satisfy the cascades of equations
\begin{equation} \label{eq_Un}
i\partial_t U_{n}+\Delta U_{n}=F_{n}
\end{equation}
with $U_{n}(0)=0$ and
\begin{equation}\label{def_Fn}
F_{n} \coloneqq \sum_{n_1+\ldots+n_p=n-1} U_{n_1} \overline{U_{n_2}}U_{n_3}  \ldots \overline{U_{n_{p-1}}} U_{n_p}
\end{equation}
for all $n\geq1$. Since the sum only involves terms of strictly lower order, $U_{n}$ is given by a linear Schrödinger equation with a forcing that depends on all lower order terms. The main contribution of this work is to design uniform in time numerical schemes for the truncated family $(U_n)_{0\le n\le N-1}$ of order $\eps^N$ for any $N\ge1$.

\smallskip

This work strongly relies on the scattering result from \textsc{Carles} and \textsc{Gallagher} \cite{Carles2009} which controls the error between the solution to \eqref{NLS} and the expansion \eqref{expansioneps} in the space $\Sigma$. Namely, it ensures the bound
\begin{equation}\label{ScatteringBound}
\big\|u(t)-\sum_{n=0}^{N-1}\eps^nU_n(t)\big\|_\Sigma\lesssim \eps^N\|\varphi\|_{\Sigma}^{p+N(p-1)}
\end{equation}
hence a precise time discretization for each $U_n$ for $n<N$ is enough to obtain a numerical scheme with error $\eps^N$ for the solution $u$. For example, the linear flow is an approximation of the solution up to an error of order $\eps$ while the next terms improve more and more the description of the nonlinear behavior. Moreover, due to the expansion in the small parameter $\eps$, each term $U_n$ can be computed with decreasing accuracy as $n$ increases, specifically with an error of order $\eps^{N-n}$. Given that each $U_n$  satisfies a linear Schrödinger equation with a forcing term dependent on the preceding iterates, it is essential to ensure a consistent discretization across different values of $n$. This leads us to introduce the concept of \textit{nested} schemes, further developed in Section \ref{sec:nested}, which ensures such coherence. The mild formulation of \eqref{eq_Un} gives
\begin{equation}
U_n(t)=-i\int_0^te^{i(t-s)\Delta}F_n(s)\drm s
\end{equation}
for $n\ge1$ since $U_n(0)=0$ which we approximate with a time discretization. For fixed $T>0$, we consider a discretization mesh $\tau>0$ such that $J\tau=T$ for $J\in\IN$ and the grid $t_j=j\tau$ for $0\le j\le J$. We then write
\begin{equation}
U_n(t_j)=-i\sum_{\alpha=0}^{j-1}\int_{t_\alpha}^{t_{\alpha+1}}e^{i(t_j-s)\Delta}F_n(s)\drm s
\end{equation}
and use two different methods to approximate the time integral on each subinterval $[t_\alpha,t_{\alpha+1}]$. Since each $F_n(s)$ depends on all lower order terms $U_{n'}$ with $n'<n$, a careful propagation of error is required. At this stage, we also observe that the small parameter  $\eps>0$ no longer appears explicitly, which in turn imposes a CFL-type condition on the time step $\tau$. Our objective is to ensure an error of order $\tau^{N-n}$ for each $U_n$, which motivates the nested structure of our numerical schemes.

\smallskip

Another significant challenge in our work is to construct uniform in time numerical scheme using scattering, as done with the recent approach of \textsc{Carles} and \textsc{Su} \cite{Carles2024}. A key aspect is the formulation of the dispersive equation \eqref{NLS} in the scattering space $\varphi \in \Sigma$. This perspective is particularly relevant given the physical motivation of our study, which is rooted in wave turbulence theory. Indeed, in the absence of dissipation or external forcing, solutions of \eqref{NLS} are expected to exhibit interesting transient dynamics over the so-called \textit{kinetic time}, which grows as $\eps$ tends to~$0$. Additionally, \cite{Carles2024} establishes that a first order filtered Lie splitting method satisfies such uniform estimates, whereas extending this property to higher order schemes is far from straightforward. In this context, our work can be viewed as a natural continuation of  \cite{Carles2024} within a weakly nonlinear framework.

\smallskip

We stress out that computing long time behaviors of nonlinear Schrödinger equations has been an intensive 	field of research in the past decades, and we refer to the book of \textsc{Faou} \cite{Faou2012} and references within. A lot of methods based on Birkhoff normal form techniques and modulated Fourier expansions \cite{FaouGrebert2010,FaouGrebert2010bis,Gauckler2010,GaucklerLubich2010} have proven to be very efficient on time scales of order~$T=\mathcal{O}(\eps^{-N})$ on periodic domains. Note that regularity compensation oscillation technique have also recently been introduced in \cite{BaoCaiFeng2023} for the same purposes. We also point that recent numerical studies have been performed in \cite{DuBuhler2023} and \cite{YangGroomsJulien2021} for wave turbulence problems.

\smallskip

While we consider the case of NLS equation with small non-linearity, we believe that the multiscale numerical schemes introduced here could be useful for many other problems. A first example also motivated by turbulence is the linear Schrödinger equation
\begin{equation}
i\partial_tu+\Delta u=\eps Vu
\end{equation}
with a potential $V:\IR^d\to\IR$, possibly random or time dependent, see for example the works of \textsc{Erdos} and \textsc{Yau} \cite{ErdosYau2000} in the first case or \textsc{Maspero} and \textsc{Robert} \cite{MasperoRobert2017} in the second one. Finally, the decorated tree structure of the iterates of the NLS equation also naturally appears for numerical scheme in the different direction of low regularity initial data performed by \textsc{Bruned} and \textsc{Schratz}~\cite{BrunedSchratz2022} and following works.

\medskip

This paper is organized as follows. In Section \ref{sec:nested}, we introduce our two numerical schemes, relying respectively on high order Newton-Cotes quadrature methods for integration with a coherent families of grids to ensure different precisions levels for each $U_n$, and high-order Taylor expansions of the solutions around temporal grid points. We also state our convergence results for both schemes, namely Theorem \ref{theorem_convergence_NQS} and Theorem \ref{theorem_convergence_NTS}. In Section \ref{section_dispersive_estimates}, we give the continuous and discrete dispersive estimates needed to prove the convergence results in respectively Section \ref{sec:convergence_NQS} for the nested quadrature scheme~\eqref{NQS} and Section \ref{sec:convergence_NTS} for the nested Taylor scheme~\eqref{NTS}. Finally, in Section~\ref{sec:numerical_experiments}, we implement and illustrate the convergence of both schemes and apply them in the context of wave turbulence in Section \ref{sec:wave_turbulence}. Note that all codes are available on the Gitlab page \url{https://plmlab.math.cnrs.fr/chauleur/codes/}.

\smallskip

For clarity and brevity, we denote space norms associated to Lebesgue spaces $L^p(\IR^d)$ and Sobolev spaces $W^{\sigma,p}(\IR^d)$ by $\|\cdot\|_{L^p_x}$ and $\|\cdot\|_{W^{\sigma,p}_x}$ in mathematical mode, respectively. Similarly, time norms are written as $\|\cdot\|_{L^p_t}$ for continous time and $\|\cdot\|_{\ell^p_{\tau}}$ for discrete time. Note that we may indicate the dependence on either continuous time $t$ or discrete times $t_j$ within these norms, in order to clarify the context and make the notation more transparent. The symbol $\lesssim$ denotes an inequality up to a constant that may depend on various parameters of the analysis, but remains uniform with respect to the time horizon $T$ and nonlinear strength $\eps$.

\medskip

\textbf{Acknowledgments:} Q.C. acknowledges the support of the CDP C2EMPI, together with the French State under the France-2030 programme, the University of Lille, the Initiative of Excellence of the University of Lille, the European Metropolis of Lille for their funding and support of the R-CDP-24-004-C2EMPI project. The authors wish to thank Geoffrey Beck, Laurent Chevillard and Giorgio Krstulovic for enlightening discussions on wave turbulence theory and especially about power-law solutions of the Wave Kinetic Equation. The authors are also grateful to Rémi Carles for highlighting the use of the operator $J(t)$, which allowed us to broaden the scope of our results.

\section{Nested formulas and main results} \label{sec:nested}

We adopt the Fourier transformation convention
\begin{equation}  
\widehat{\phi}(\xi)=\int_{\IR^d} \phi(x) e^{-i x \cdot \xi} \drm x
\end{equation}
for all $\xi \in \IR^d$ and we denote by 
\begin{equation}
S(t)\phi \coloneqq e^{it\Delta}\phi
\end{equation}
the linear flow of the Schrödinger equation for any $\phi \in L^2(\IR^d)$ and $t \in \IR$. We also denote by
\begin{equation} 
S_{\tau}(t) \phi \coloneqq S(t) \Pi_{\tau} \phi
\end{equation}
the filtered linear flow with low-frequency projector
\begin{equation}
\widehat{\Pi_{\tau} \phi} (\xi) = \widehat{\phi}(\xi) \chi(\sqrt{\tau}\xi)
\end{equation}
for any $\tau>0$ and for a given cut-off function $\chi \in \mathcal{C}^{\infty}(\IR^d)$ supported on $B^d(0,2)$ such that $\chi \equiv 1$ on~$B^d(0,1)$. Note that low frequency projected scheme has proven to be very efficient in the context of low regularity schemes \cite{IgnatZuazua2009,Ignat2011,ChoiKoh2021,OstermannRoussetSchratz2021}, and more recently for uniform in time scheme \cite{Carles2024}. For any~$n\ge1$, Duhamel's formulation of equation \eqref{eq_Un} gives
\begin{equation}\label{eq_Un_integrated}
U_n(t)=-i\int_0^t S(t-s) F_n(s)\drm s
\end{equation}
for any $t\in\IR$. To lighten the notation, we might omit complex conjugation in the following of the terms where it has no impact on the computations. 

\medskip

We now fix an horizon time $T>0$ and an order of convergence $N\in\IN^*$. We denote by $\tau>0$ the time step and $J\in\IN$ such that $T=J\tau$ and consider the discretization
\begin{equation}
t_j=j\tau
\end{equation}
of the time interval $[0,T]$ for $0\le j\le J$. In the next sections, we will construct two families of numerical scheme that approximate $U_n(t_j)$ for $0\le n\le N-1$ and $0\le j\le J$ with an error of order~$\tau^{N-n}$. These two schemes will be respectively based on a quadrature discretization of the integral appearing in the Duhamel equations \eqref{eq_Un_integrated}, and on a high-order Taylor expansion of $F_n$ around the discretization points $t_j$.

\subsection{Nested Quadrature Scheme}

We first introduce the Nested Quadrature Scheme \eqref{NQS} with nested discretization grids. Given a smooth function $f:[0,T]\to\IR$, the idea of quadrature formulas is to approximate $f$ by a polynomial~$P_j^m$ of degree $m\in\IN$ on each time interval $[t_j,t_{j+1}]$. To do that, introduce the finer grid
\begin{equation}
t_{j,\beta}^{(m)} \coloneqq t_j+\frac{\beta}{m}\tau
\end{equation}
for $0\le \beta\le m$. The Newton-Cotes quadrature method of order $m$ is the approximation
\begin{equation}
\int_0^T f(t)\drm t\simeq\sum_{j=0}^{J-1}\int_{t_j}^{t_{j+1}}P_j^m(t)\drm t
\end{equation}
with $P_j^m$ the unique polynomial of degree $m$ such that $P_j^m(t_j+\frac{\beta}{m}\tau)=f(t_j+\frac{\beta}{m}\tau)$ for $0\le \beta \le m$. Using Lagrange polynomials, one can prove that there exists weights $(\omega_\beta^{(m)})_{0\le\beta\le m}$ such that
\begin{equation}
\int_{t_j}^{t_{j+1}}P_j^m(t)\drm t=\sum_{\beta=0}^m\omega_\beta^{(m)}f(t_{j,\beta}^{(m)})
\end{equation}
for any $0\le j\le J$. Then we get
\begin{equation}
\Big|\int_0^Tf(t)\drm t-\sum_{j=0}^{J-1}\sum_{\beta=0}^m\omega_{\beta}^{(m)}f(t_{j,\beta}^{(m)})\Big|\lesssim\tau^{m+2}
\end{equation}
for smooth functions $f$ and even integer $m\ge2$, see for example Chapter $6$ from \cite{Hoffman2001}. Of course, the above error depends a priori on the final time $T$, so in our case we need to carefully exploit the dispersive properties of the numerical scheme to eliminate this dependence.

\medskip

To obtain an error of order $\tau^{N-n}$ for $U_n$, a natural idea is to consider a Newton-Cotes quadrature formula of order $m=N-n-2$ for the time integral in \eqref{eq_Un_integrated}. Since this requires to work on a grid that depends on $m$, one needs to have the discrete lower order terms on the same grid. This imposes a strong condition on the discretization and we consider dyadic partitions of each interval~$[t_j,t_{j+1}]$. Due to its structure, this method yields numerical schemes of order at most $N\le4$ as higher-order accuracy is obstructed by error propagation. Nonetheless, we include this scheme here, as it remains significantly simpler than the one introduced next, while still achieving fourth-order accuracy, which is sufficient for the physical applications we consider. We present the case $N=4$, noting that lower-order cases $N<4$ can be straightforwardly obtained by adjusting the Newton–Cotes rule used for each term. Moreover, our other scheme \eqref{NTS} is only applicable in dimension 3 (so for the cubic case $p=3$) up to order $N=3$, whereas the \eqref{NQS} scheme allows for higher-order accuracy (namely $N=4$) in this setting.
 
We now define the \eqref{NQS} scheme as follows: for any $0 \leq j \leq J$, 
\begin{equation} \label{NQS} \tag{NQS}
\left|
\begin{aligned}
\ \mathfrak{U}_3^j& \coloneqq -i\sum_{\alpha=0}^{j-1}S_\tau(t_j-t_\alpha) \mathfrak{F}_3^{\alpha}\\
   & \text{where} \quad  \mathfrak{F}_3^{\alpha} \coloneqq  \sum_{n_1+\ldots+n_p=2} \mathfrak{U}_{n_1}^{\alpha} \overline{\mathfrak{U}_{n_2}^{\alpha}} \ldots \mathfrak{U}_{n_p}^{\alpha}, \\
\mathfrak{U}_2^j& \coloneqq -i\sum_{\alpha=0}^{j-1}\frac{\tau}{2} \left(S_\tau(t_j-t_\alpha) \mathfrak{F}_2^{\alpha} +  S_\tau(t_j-t_{\alpha+1}) \mathfrak{F}_2^{\alpha+1} \right) \\
& =- i \tau \sum_{\alpha=0}^{j-1}\sum_{\beta=0}^1 S_{\tau}(t_j-t_{\alpha,\beta}^{(1)}) (\omega_{\beta}^{(1)} \mathfrak{F}_2^{\alpha+\beta} )  \\
& \text{where} \quad \mathfrak{F}_2^{\alpha} \coloneqq \sum_{n_1+\ldots+n_p=1} \mathfrak{U}_{n_1}^{\alpha} \overline{\mathfrak{U}_{n_2}^{\alpha}} \ldots \mathfrak{U}_{n_p}^{\alpha} = \left(\frac{p+1}{2}|\mathfrak{U}_0^\alpha|^{p-1} \mathfrak{U}_1^\alpha+\frac{p-1}{2}|\mathfrak{U}_0^\alpha|^{p-3}(\mathfrak{U}_0^\alpha)^2 \overline{\mathfrak{U}_1^\alpha}\right)\\
\mathfrak{U}_1^j& \coloneqq  -i \sum_{\alpha=0}^{j-1} \tau \left( \frac{1}{6} S_{\tau}(t_j-t_{\alpha}) \mathfrak{F}_1^{\alpha} + \frac{2}{3} S_{\tau}(t_j-t_{\alpha+\frac12}) \mathfrak{F}_1^{\alpha+\frac12} + \frac{1}{6} S_{\tau}(t_j-t_{\alpha+1}) \mathfrak{F}_1^{\alpha+1} \right) \\
& =- i \tau \sum_{\alpha=0}^{j-1}\sum_{\beta=0}^2 S_{\tau}(t_j-t_{\alpha,\beta}^{(2)}) (\omega_{\beta}^{(2)} \mathfrak{F}_1^{\alpha+\frac{\beta}{2}} )  \\
& \text{where} \quad \mathfrak{F}_1^{\alpha} \coloneqq \sum_{n_1+\ldots+n_p=0} \mathfrak{U}_{n_1}^{\alpha} \overline{\mathfrak{U}_{n_2}^{\alpha}} \ldots \mathfrak{U}_{n_p}^{\alpha} = |\mathfrak{U}_0^{\alpha}|^{p-1} \mathfrak{U}_0^{\alpha}, \\
 \ \mathfrak{U}_0^{\alpha+\frac{1}{2}}& \coloneqq S_{\tau} (\tau/2 ) \mathfrak{U}_0^{\alpha} \ \text{for all} \ \alpha \in \frac{\tau}{2}\IZ, \quad \text{with} \quad \mathfrak{U}_0^0=\varphi,
\end{aligned}
\right.
\end{equation}
where we have respectively used a left rectangle rule to discretize $U_3$, a trapezoidal rule for $U_2$ and and Simpson’s rule for $U_1$ (which requires the discretization of $U_0$ on a finer grid). We now state our first convergence result, the proof of which will be provided in Section~\ref{sec:convergence_NQS}.

\medskip

\begin{theorem}\label{theorem_convergence_NQS}
For $1 \leq N \leq 4$ and $\varphi\in\Sigma\cap H^{2N}(\IR^d)$, let $u$ be the solution to equation \eqref{NLS} with initial data $u(0)=\varphi$. Fix $T,\tau>0$ and $J\in\IN$ such that $T=J\tau$ and consider the \eqref{NQS} numerical scheme $(\mathfrak{U}_n^j)_{n,j}$ for $0\le n\le N-1$ and $0\le j\le J$ defined previously. Then there exists a constant~$C=C(N,d,\|\varphi\|_{\Sigma},\|\varphi\|_{H^{2N}})>0$ independent of $T$ such that
\begin{equation}
\sup_{0\le j\le J}\|u(t_j)-\sum_{n=0}^{N-1}\eps^n \mathfrak{U}_n^j\|_{L^2_x}\le C\sum_{n=0}^{N}\eps^n\tau^{N-n}.
\end{equation}
In particular, we get
\begin{equation}
\sup_{0\le j\le J}\|u(t_j)-\sum_{n=0}^{N-1}\eps^n \mathfrak{U}_n^j\|_{L^2_x}\le C\eps^N
\end{equation}
for $\tau\le\eps$.
\end{theorem}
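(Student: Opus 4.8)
The overall strategy is to combine the scattering bound \eqref{ScatteringBound} from \cite{Carles2009}, which reduces the problem to approximating each continuous iterate $U_n$ by its discrete counterpart $\mathfrak{U}_n^j$ with error $\tau^{N-n}$, together with the discrete dispersive estimates of Section \ref{section_dispersive_estimates} to make the error constants uniform in $T$. By the triangle inequality,
\[
\big\|u(t_j)-\sum_{n=0}^{N-1}\eps^n\mathfrak{U}_n^j\big\|_{L^2_x}
\le \big\|u(t_j)-\sum_{n=0}^{N-1}\eps^nU_n(t_j)\big\|_{L^2_x}
+\sum_{n=0}^{N-1}\eps^n\big\|U_n(t_j)-\mathfrak{U}_n^j\big\|_{L^2_x},
\]
and \eqref{ScatteringBound} controls the first term by $C\eps^N$ (this is the $n=N$ contribution in the claimed bound, and it is uniform in $T$ since the $\Sigma$-norms of the $U_n$ are). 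So the crux is the second term: I will prove by induction on $n$ that $\sup_{0\le j\le J}\|U_n(t_j)-\mathfrak{U}_n^j\|_{L^2_x}\le C_n\tau^{N-n}$, uniformly in $T$.

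For the base case $n=0$ one has $U_0(t_j)=S(t_j)\varphi$ and $\mathfrak{U}_0^j=S_\tau(t_j)\varphi=S(t_j)\Pi_\tau\varphi$, so $\|U_0(t_j)-\mathfrak{U}_0^j\|_{L^2_x}=\|(\Id-\Pi_\tau)S(t_j)\varphi\|_{L^2_x}\lesssim \tau^{N}\|\varphi\|_{H^{2N}}$, using that $1-\chi(\sqrt\tau\xi)$ is supported on $|\xi|\gtrsim \tau^{-1/2}$ and bounded by $(\sqrt\tau|\xi|)^{2N}$ there; note this is genuinely uniform in $t_j$ because $S(t_j)$ is an $L^2$-isometry commuting with the Fourier multiplier. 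For the inductive step, write the exact identity
\[
U_n(t_j)=-i\sum_{\alpha=0}^{j-1}\int_{t_\alpha}^{t_{\alpha+1}}S(t_j-s)F_n(s)\,\drm s,
\qquad
\mathfrak{U}_n^j=-i\sum_{\alpha=0}^{j-1}\sum_{\beta} S_\tau(t_j-t_{\alpha,\beta}^{(m_n)})\,\omega_\beta^{(m_n)}\mathfrak{F}_n^{\alpha+\beta/m_n},
\]
where $m_n=N-n-2$ (with the rectangle rule for $n=N-1$). Split the error into three pieces: (i) replacing $F_n(s)$ inside the integral by the Newton--Cotes quadrature of the function $s\mapsto S(t_j-s)F_n(s)$ on each $[t_\alpha,t_{\alpha+1}]$; (ii) replacing $S$ by the filtered flow $S_\tau$; (iii) replacing the exact sampled values $F_n(t_{\alpha,\beta}^{(m_n)})$ by the discrete forcing $\mathfrak{F}_n^{\alpha+\beta/m_n}$ built from the lower-order discrete iterates. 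Piece (iii) is handled by the induction hypothesis applied to each factor in the $p$-linear expression \eqref{def_Fn}, using that the discrete iterates stay bounded in the relevant Sobolev norm (a separate uniform-in-$T$ a priori bound on $\|\mathfrak{U}_{n'}^j\|_{H^s}$, itself proved by induction using the discrete dispersive estimates) and a multilinear Leibniz/Hölder argument; since each $n'<n$, the gain is at least $\tau^{N-n+1}$, better than needed. Piece (ii) is the same Fourier-support argument as the base case, producing $\tau^{2N}$-type gains, harmless. Piece (i) is the core: the Newton--Cotes error on $[t_\alpha,t_{\alpha+1}]$ for a $\mathcal{C}^{m_n+2}$ integrand is $\lesssim \tau^{m_n+3}\sup_{[t_\alpha,t_{\alpha+1}]}\|\partial_s^{m_n+2}(S(t_j-s)F_n(s))\|$; summing $J=T/\tau$ such terms would naively give $T\tau^{m_n+2}=T\tau^{N-n}$, which is not uniform in $T$ — this is exactly the obstruction flagged in the paper.

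The main obstacle, and the only genuinely delicate point, is removing the factor of $T$ in piece (i). The resolution is the same mechanism that makes filtered splitting schemes uniform in time in \cite{Carles2024}: one does not estimate the summand in $L^2_x$ directly, but exploits the dispersive decay of $S(t_j-t_\alpha)$ (and its discrete/filtered analogue $S_\tau(t_j-t_\alpha)$) acting on the quadrature-error terms, which live in $\Sigma$. Concretely, the $m_n+2$ time-derivatives hitting $S(t_j-s)F_n(s)$ either fall on $F_n$ (which, being multilinear in free-flow-type solutions lying in $\Sigma$, enjoys the $L^1_x\to L^\infty_x$–style decay of the $U_{n'}$) or fall on $S(t_j-s)$, producing powers of $\Delta$ that are absorbed by the $H^{2N}$ regularity of $\varphi$; then the continuous and discrete Strichartz / weighted dispersive estimates from Section \ref{section_dispersive_estimates} convert the sum $\sum_\alpha \|S_\tau(t_j-t_\alpha)(\cdots)\|_{L^2_x}$ into a discrete-in-time $\ell^{q'}_\tau$ norm against an $\ell^q_\tau$-summable dispersive weight $\langle t_j-t_\alpha\rangle^{-\delta}$ with $\delta>1$, whose sum is bounded independently of $J$. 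This is precisely where the hypothesis $p\ge 5$ in $d=1$ and $p=3$ in $d=3$ enters, guaranteeing enough dispersive decay for the relevant admissible exponents. Carrying out (i) thus amounts to: (a) expand $\partial_s^{m_n+2}$ by Leibniz over the $p$ factors of $F_n$ and the flow; (b) bound each resulting term in a weighted space uniformly in $j,\alpha$ using the a priori $\Sigma\cap H^{2N}$ control of the exact iterates from \cite{Carles2009,FaouMouzard2024}; (c) sum over $\alpha$ using the summable dispersive weight. Once piece (i) is uniform in $T$, assembling (i)+(ii)+(iii) closes the induction at level $n$ with a constant depending only on $N,d,\|\varphi\|_\Sigma,\|\varphi\|_{H^{2N}}$, and summing $\eps^n\tau^{N-n}$ over $0\le n\le N$ gives the stated bound; specializing to $\tau\le\eps$ gives $\sum_{n=0}^N\eps^n\tau^{N-n}\le (N+1)\eps^N$, i.e.\ the final inequality.
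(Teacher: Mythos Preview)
Your overall architecture is correct and matches the paper: the triangle inequality against the truncated Picard series, the scattering bound \eqref{ScatteringBound} for the $\eps^N$ term, and an induction on $n$ splitting the error into a quadrature contribution and a propagation-of-error contribution, with uniformity in $T$ coming from dispersive estimates. Two points where your plan diverges from (or is looser than) the paper are worth flagging.

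First, your mechanism for piece (i) is not the one that actually works cleanly. You propose to control $\sum_\alpha\|S_\tau(t_j-t_\alpha)(\cdots)\|_{L^2_x}$ against a summable pointwise weight $\langle t_j-t_\alpha\rangle^{-\delta}$; but Strichartz estimates do not produce such weights, and bounding the sum of $L^2$ norms (rather than the $L^2$ norm of the sum) gives away too much. What the paper does instead is to rewrite the accumulated local quadrature errors via the Peano kernel as a \emph{single} Duhamel integral
\[
\mathfrak{B}_n^j=\int_0^{t_j}S_\tau(t_j-s)\,K(s)\,\Lambda_n(s)\,\drm s,
\qquad |K(s)|\lesssim \tau^{2^{\mathfrak m_n}},
\]
and then apply the mixed discrete--continuous inhomogeneous Strichartz estimate (Lemma~\ref{lemma_discrete_continuous_Strichartz}) to bound $\|\mathfrak{B}_n^j\|_{\ell^q_\tau L^r_x}\lesssim \tau^{2^{\mathfrak m_n}}\|\Lambda_n\|_{L^{q_2'}_tL^{r_2'}_x}$, with the dual Strichartz norm of $\Lambda_n$ controlled by H\"older and the continuous bounds of Proposition~\ref{prop_unif_space_time_bounds_Un^tau}. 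This is where uniformity in $T$ comes from --- not from a pointwise-in-$\alpha$ decay.

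Second, the paper does your piece (ii) \emph{globally and first}: it introduces the filtered iterates $U_n^\tau$ (built with $S_\tau$ and $F_n^\tau$) and proves $\|U_n-U_n^\tau\|_{L^q_tL^r_x}\lesssim\tau^N$ once and for all (Proposition~\ref{prop:Unstarbound}) via continuous Strichartz. The induction is then run entirely between $U_n^\tau$ and $\mathfrak U_n^j$, which is necessary because the unfiltered flow $S(t)$ does \emph{not} satisfy discrete-time Strichartz estimates; only $S_\tau$ does. Relatedly, the induction hypothesis must be stated in $\ell^q_\tau L^r_x$ for all admissible $(q,r)$, not merely in $\ell^\infty_\tau L^2_x$, so that H\"older inequality can close on the multilinear propagation term $\mathfrak C_n^j$ (and one needs the a priori discrete bounds of Proposition~\ref{prop_unif_discrete_space_time_bounds_NQS} on $\mathfrak U_{n'}^j$, including for the half-grid values used by Simpson's rule). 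Finally, your ``$\tau^{2N}$'' in piece (ii) should read $\tau^{N}$.
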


\begin{remark}
One could, in principle, define an analogous scheme for arbitrarily high order. However, such schemes do not converge a priori, as it becomes impossible to control the local error
\[ U_n^{\tau} \left(t_j+\frac{1}{2} \right)-\mathfrak{U}_n^{j+\frac{1}{2}}\]
for $n\ge1$. This issue comes from the fact that a Newton–Cotes quadrature of order $N\ge1$ typically degrades to first order if even a single point is removed from the discretization. While one might hope to use the equation itself to infer values on a coarser grid from already constructed approximations, such a strategy appears unfeasible in this context. This highlights the sensitivity of high-order schemes to local errors. Similar difficulties also arise with more traditional exponential integrators, such as Runge–Kutta methods (which may fail to satisfy dispersive estimates) or Lawson-type methods. Moreover, our approach relies crucially on discrete Strichartz estimates, which are not known to hold on non-uniform temporal grids.
\end{remark}

\subsection{Nested Taylor Scheme}\label{subsec:NTS}

We now construct the Nested Taylor Scheme \eqref{NTS} based on a high-order Taylor expansion in time. Rather than introducing a finer temporal grid, we discretize the time derivatives of the solution directly. Exploiting the underlying equation, we can recursively propagate the error, enabling the construction of arbitrarily high-order methods. To approximate $U_n$ for $n\ge1$, we apply the Taylor formula with an explicit remainder to the nonlinear term. In the following, we first fix a $p$-uplet $(n_1,\ldots,n_p)\in\llbracket0,N-1\rrbracket^p$ such that $n_1+\ldots+n_p=n-1$ and consider
\begin{align}
\int_0^{t_j}S(t_j-s)&U_{n_1}(s)\overline{U_{n_2}(s)}\ldots U_{n_p}(s)\drm s=\sum_{\alpha=0}^{j-1}\int_{t_\alpha}^{t_{\alpha+1}}S(t_j-s)U_{n_1}(s)\overline{U_{n_2}(s)}\ldots U_{n_p}(s)\drm s\\
&=\sum_{\alpha=0}^{j-1}\sum_{\beta=0}^{m_n}\int_{t_\alpha}^{t_{\alpha+1}}\frac{(s-t_\alpha)^\beta}{\beta!}\partial_s^\beta\big(S(t_j-s)U_{n_1}(s)\overline{U_{n_2}(s)}\ldots U_{n_p}(s)\big)(t_\alpha)\drm s\\
&\quad+\sum_{\alpha=0}^{j-1}\int_{t_\alpha}^{t_{\alpha+1}}\int_{t_\alpha}^{s_1}\frac{(s_1-s_2)^{m_n}}{m_n!}\partial_s^{\beta+1}\big(S(t_j-s)U_{n_1}(s)\overline{U_{n_2}(s)}\ldots U_{n_p}(s)\big)(s_2)\drm s_2\drm s_1
\end{align}
where the order $m_n$ is defined by
\begin{equation}
m_n\coloneqq N-n-1
\end{equation} 
decreases as $n$ grows from $0$ to $N-1$. Indeed, the smaller $n$ is, the higher the required accuracy, that is an error of order $\tau^{N-n}$. Using Leibniz rule, the time derivative of order $\beta$ of~$F_n$ can be expressed as a combination of time derivatives of $U_{n'}$ (with $n'<n$) up to order $\beta$. Since each~$U_n$ satisfies a linear Schrödinger equation driven by lower-order terms, we can recursively use the equation to convert time derivatives into spatial derivatives and products involving lower-order components. Unlike the previous method, this allows us to propagate fine control on the quantities~$\nabla^k U_n$ which themselves solve linear Schrödinger equations, making it possible to design numerical schemes of arbitrarily high order. Following the approach introduced by \textsc{Butcher} in his seminal work \cite{Butcher1972} on high-order Runge–Kutta methods, we now introduce the decorated tree notation, which provides the formal framework for defining our high-order scheme. Consider
\begin{equation}
U_0=\begin{tikzpicture}[baseline]
\node (1) at (0,0) [inner sep=0pt] {};
\node (2) at (0,3/8) [tdot] {};
\draw (1) -- (2);
\end{tikzpicture}
\end{equation}
the free propagation of the initial data $\varphi$. Here the dot represents the initial data while the edge stands for the Schrödinger propagator. When considering a conjugation, we will use dotted edge with
\begin{equation}
\overline{U_0}=\begin{tikzpicture}[baseline]
\node (2) at (0,3/8) [tdot] {};
\draw[densely dotted] (0,0) -- (2);
\end{tikzpicture}
\end{equation}
and for product we just link trees thus for instance for cubic interactions
\begin{equation}
|U_0|^2U_0=\begin{tikzpicture}[baseline]
\node (2) at (0,3/8) [tdot] {};
\node (3) at (-1/4,3/8) [tdot] {};
\node (4) at (1/4,3/8) [tdot] {};
\draw[densely dotted] (0,0) -- (2);
\draw (0,0) -- (3);
\draw (0,0) -- (4);
\end{tikzpicture}
\end{equation}
where the trees are not planar thus the choice of the position of the dotted line is not important. Since we will need to compute spatial derivative $\nabla^k$ which commute with the free propagator, we adopt the notation
\begin{equation}
\nabla^kU_0=\begin{tikzpicture}[baseline]
\node (2) at (0,3/8) [tdot] {};
\node at (1/6,3/16) [] {\tiny$k$};
\draw (0,0) -- (2);
\end{tikzpicture}
\end{equation}
where $k=0$ and no index denote the same quantity with analog notation for the conjugate. In the following, we shall call decorations the fact that a line is dotted and that there is an integer $k$ on edges. A tree without decoration is called a \textit{bare tree}. Note that since a black node represents~$\varphi$, a tree with $q$ leaves is a $q$-linear functional of the initial data. We fix $p=3$ in the following examples of trees to keep the notation lighter while still explaining the general case. The recursive definition
\begin{equation}
(i\partial_t-\Delta)U_n=\sum_{n_1+n_2+n_3=n-1}U_{n_1}\overline{U_{n_2}}U_{n_3}
\end{equation}
comes done to adding new trees in our collection. Since each $U_n$ is defined first via an integration~$(i\partial_t-\Delta)^{-1}$ applied to a product of previously constructed terms, they are represented with a planted tree of the form
\begin{equation}
a=\begin{tikzpicture}[baseline]
\node (2) at (0,1/2) [] {b};
\draw (0,0) -- (2);
\end{tikzpicture}
\end{equation}
where $b$ is a product of $p$ trees. In the following we denote by $a^t=b$ a planted tree without its first unique edge. This gives the construction rule
\begin{equation}\label{eq:recursivetree}
a=\begin{tikzpicture}[baseline]
\node (2) at (0,1/2+6/8) [] {$a_2^t$};
\node (3) at (-1/2,1/2+6/8) [] {$a_1^t$};
\node (4) at (1/2,1/2+6/8) [] {$a_3^t$};
\draw[densely dotted] (0,1/2) -- (2);
\draw (0,1/2) -- (3);
\draw (0,1/2) -- (4);
\draw (0,1/2) -- (0,0);
\end{tikzpicture}
\end{equation}
with trees $a_1,a_2,a_3$ previously constructed. In particular, this provides a very efficient way to represent $F_n$ as a sum of $p$ rooted trees with $1+(p-1)n$ leaves for general $p\ge1$. 

\medskip 

\begin{definition}
For any integer $n\ge1$, we define $\CS_n$ as the set of planted trees with $p+(p-1)(n-1)$ leaves and $\CT_n$ the set of decorated trees from $\CT_n$ where each nodes except the root has $p$ upgoing edges among which exactly $\frac{p-1}{2}$ are dotted.
\end{definition}

\medskip 

With this new notation, we can then write that
\begin{equation}
U_n=\sum_{a\in\CT_n}c(a)a
\end{equation}
with $c(a)\in\IN$ coefficients coming from the symetries of the trees that we do not carefuly track here. In particular, the number of leaves of a tree determines its index $n$. For $p=3$, we have
\begin{equation}
\CT_1=\Big\{\ \begin{tikzpicture}[baseline]
\node (2) at (0,-1/4+6/8) [tdot] {};
\node (3) at (-1/4,-1/4+6/8) [tdot] {};
\node (4) at (1/4,-1/4+6/8) [tdot] {};
\draw[densely dotted] (0,0) -- (2);
\draw (0,0) -- (3);
\draw (0,0) -- (4);
\draw (0,0) -- (0,-1/4);
\end{tikzpicture}\ \Big\}
\quad\text{and}\quad
\CT_2=\Big\{\ \begin{tikzpicture}[baseline]
\node (2) at (-1/4,1/4+6/8) [tdot] {};
\node (3) at (-1/2,1/4+6/8) [tdot] {};
\node (4) at (0,1/4+6/8) [tdot] {};
\node (5) at (-1/2,1/2) [tdot] {};
\node (6) at (-3/4,1/2) [tdot] {};
\draw[densely dotted] (-1/4,1/2) -- (2);
\draw (-1/4,1/2) -- (3);
\draw (-1/4,1/2) -- (4);
\draw (-1/4,1/2) -- (-1/2,0);
\draw[densely dotted] (5) -- (-1/2,0);
\draw (6) -- (-1/2,0);
\draw (-1/2,-1/4) -- (-1/2,0);
\end{tikzpicture}
\quad,
\begin{tikzpicture}[baseline]
\node (2) at (-1/4,1/4+6/8) [tdot] {};
\node (3) at (-1/2,1/4+6/8) [tdot] {};
\node (4) at (0,1/4+6/8) [tdot] {};
\node (5) at (-1/2,1/2) [tdot] {};
\node (6) at (-3/4,1/2) [tdot] {};
\draw[densely dotted] (-1/4,1/2) -- (2);
\draw (-1/4,1/2) -- (3);
\draw (-1/4,1/2) -- (4);
\draw[densely dotted] (-1/4,1/2) -- (-1/2,0);
\draw (5) -- (-1/2,0);
\draw (6) -- (-1/2,0);
\draw (-1/2,-1/4) -- (-1/2,0);
\end{tikzpicture}\ \Big\}
\end{equation}
where the first tree of $\CT_2$ has coefficient $2$ since there are two full edges where one can graft the tree of $\CT_1$. For $\CT_3$, one has to consider the growing mechanism given by \eqref{eq:recursivetree} with either two trees from $\CT_1$ and one leaf, or one tree from $\CT_2$ with two leaves. We then get
\begin{equation}
\CT_3=\Big\{\ \begin{tikzpicture}[baseline]
\node (2) at (-1/4+1/10,1/4+6/8) [tdot] {};
\node (3) at (-1/2+1/10,1/4+6/8) [tdot] {};
\node (4) at (0+1/10,1/4+6/8) [tdot] {};
\node (5) at (-1/2,1/2) [tdot] {};
\draw[densely dotted] (-1/4+1/10,1/2) -- (2);
\draw (-1/4+1/10,1/2) -- (3);
\draw (-1/4+1/10,1/2) -- (4);
\draw (-1/4+1/10,1/2) -- (-1/2,0);
\draw[densely dotted] (5) -- (-1/2,0);
\draw (-3/4-1/10,1/2) -- (-1/2,0);
\node (6) at (-1-1/10,1/4+6/8) [tdot] {};
\node (7) at (-1/2-1/10,1/4+6/8) [tdot] {};
\node (8) at (-3/4-1/10,1/4+6/8) [tdot] {};
\draw[densely dotted] (-3/4-1/10,1/2) -- (6);
\draw (-3/4-1/10,1/2) -- (7);
\draw (-3/4-1/10,1/2) -- (8);
\draw (-1/2,0) -- (-1/2,-1/4);
\end{tikzpicture}
\ ,
\begin{tikzpicture}[baseline]
\node (2) at (-1/4+1/10,1/4+6/8) [tdot] {};
\node (3) at (-1/2+1/10,1/4+6/8) [tdot] {};
\node (4) at (0+1/10,1/4+6/8) [tdot] {};
\node (5) at (-1/2,1/2) [tdot] {};
\draw[densely dotted] (-1/4+1/10,1/2) -- (2);
\draw (-1/4+1/10,1/2) -- (3);
\draw (-1/4+1/10,1/2) -- (4);
\draw (-1/4+1/10,1/2) -- (-1/2,0);
\draw (5) -- (-1/2,0);
\draw[densely dotted] (-3/4-1/10,1/2) -- (-1/2,0);
\node (6) at (-1-1/10,1/4+6/8) [tdot] {};
\node (7) at (-1/2-1/10,1/4+6/8) [tdot] {};
\node (8) at (-3/4-1/10,1/4+6/8) [tdot] {};
\draw[densely dotted] (-3/4-1/10,1/2) -- (6);
\draw (-3/4-1/10,1/2) -- (7);
\draw (-3/4-1/10,1/2) -- (8);
\draw (-1/2,0) -- (-1/2,-1/4);
\end{tikzpicture}
\ ,\ 
\begin{tikzpicture}[baseline]
\node (2) at (-1/4,1/4+6/8) [tdot] {};
\node (3) at (-1/2,1/4+6/8) [tdot] {};
\node (4) at (0,1/4+6/8) [tdot] {};
\node (5) at (-1/2,1/2) [tdot] {};
\node (6) at (-3/4,1/2) [tdot] {};
\draw[densely dotted] (-1/4,1/2) -- (2);
\draw (-1/4,1/2) -- (3);
\draw (-1/4,1/2) -- (4);
\draw (-1/4,1/2) -- (-1/2,0);
\draw[densely dotted] (5) -- (-1/2,0);
\draw (6) -- (-1/2,0);
\draw (-1/2,0) -- (-3/4,-1/2);
\node (7) at (-3/4,0) [tdot] {};
\node (8) at (-1,0) [tdot] {};
\draw[densely dotted] (7) -- (-3/4,-1/2);
\draw (8) -- (-3/4,-1/2);
\draw (-3/4,-3/4) -- (-3/4,-1/2);
\end{tikzpicture}
\ ,\ 
\begin{tikzpicture}[baseline]
\node (2) at (-1/4,1/4+6/8) [tdot] {};
\node (3) at (-1/2,1/4+6/8) [tdot] {};
\node (4) at (0,1/4+6/8) [tdot] {};
\node (5) at (-1/2,1/2) [tdot] {};
\node (6) at (-3/4,1/2) [tdot] {};
\draw[densely dotted] (-1/4,1/2) -- (2);
\draw (-1/4,1/2) -- (3);
\draw (-1/4,1/2) -- (4);
\draw (-1/4,1/2) -- (-1/2,0);
\draw[densely dotted] (5) -- (-1/2,0);
\draw (6) -- (-1/2,0);
\draw[densely dotted] (-1/2,0) -- (-3/4,-1/2);
\node (7) at (-3/4,0) [tdot] {};
\node (8) at (-1,0) [tdot] {};
\draw (7) -- (-3/4,-1/2);
\draw (8) -- (-3/4,-1/2);
\draw (-3/4,-3/4) -- (-3/4,-1/2);
\end{tikzpicture}
\ ,\ 
\begin{tikzpicture}[baseline]
\node (2) at (-1/4,1/4+6/8) [tdot] {};
\node (3) at (-1/2,1/4+6/8) [tdot] {};
\node (4) at (0,1/4+6/8) [tdot] {};
\node (5) at (-1/2,1/2) [tdot] {};
\node (6) at (-3/4,1/2) [tdot] {};
\draw[densely dotted] (-1/4,1/2) -- (2);
\draw (-1/4,1/2) -- (3);
\draw (-1/4,1/2) -- (4);
\draw[densely dotted] (-1/4,1/2) -- (-1/2,0);
\draw (5) -- (-1/2,0);
\draw (6) -- (-1/2,0);
\draw (-1/2,0) -- (-3/4,-1/2);
\node (7) at (-3/4,0) [tdot] {};
\node (8) at (-1,0) [tdot] {};
\draw[densely dotted] (7) -- (-3/4,-1/2);
\draw (8) -- (-3/4,-1/2);
\draw (-3/4,-3/4) -- (-3/4,-1/2);
\end{tikzpicture}
\ ,\ 
\begin{tikzpicture}[baseline]
\node (2) at (-1/4,1/4+6/8) [tdot] {};
\node (3) at (-1/2,1/4+6/8) [tdot] {};
\node (4) at (0,1/4+6/8) [tdot] {};
\node (5) at (-1/2,1/2) [tdot] {};
\node (6) at (-3/4,1/2) [tdot] {};
\draw[densely dotted] (-1/4,1/2) -- (2);
\draw (-1/4,1/2) -- (3);
\draw (-1/4,1/2) -- (4);
\draw[densely dotted] (-1/4,1/2) -- (-1/2,0);
\draw (5) -- (-1/2,0);
\draw (6) -- (-1/2,0);
\draw[densely dotted] (-1/2,0) -- (-3/4,-1/2);
\node (7) at (-3/4,0) [tdot] {};
\node (8) at (-1,0) [tdot] {};
\draw (7) -- (-3/4,-1/2);
\draw (8) -- (-3/4,-1/2);
\draw (-3/4,-3/4) -- (-3/4,-1/2);
\end{tikzpicture}\ \Big\}
\end{equation}
where one can observe two distinct tree structures, each corresponding to a different configuration of dotted edges. As with the example of $\CT_2$, the associated coefficients can be computed by counting the number of graftings that produce a given tree, though we do not provide the general formula here. Since $a^t$ denotes the tree $a$ with its root removed, we can also write that
\begin{equation}
F_n=\sum_{a\in\CT_n}c(a)a^t
\end{equation}
with the same coefficients and the convention $a^t=0$ for $a=\begin{tikzpicture}[baseline]
\node (1) at (0,0) [inner sep=0pt] {};
\node (2) at (0,3/8) [tdot] {};
\draw (1) -- (2);
\end{tikzpicture}$ . Thanks to the recursive construction from \eqref{eq:recursivetree}, each $a^t$ for $a\in\CT_n$ is a product of $p$ trees 
\begin{equation}
a^t=b_1\ldots b_p
\end{equation}
with $b_k\in\{a_k,\overline{a_k}\}$ and $a_k\in \CT_{n_k}$ such that $n_1+\ldots+n_p=n-1$ where exactly $\frac{p-1}{2}$ are conjugated. Then Taylor expansions of $U_n$ requires the computation of
\begin{align}
\partial_s\big(S(t-s) b_1\ldots b_p\big)&=\sum_{k=1}^p S(t-s)(\partial_s-i\Delta)b_k\prod_{k'\neq k}b_{k'}\\ 
&\quad-2i\sum_{1\le k_1<k_2\le p}S(t-s)\nabla b_{k_1}\cdot\nabla b_{k_2}\prod_{k'\neq k_1,k_2}b_{k'}
\end{align}
using the Leibniz rule for the Laplacian $\Delta$ which arises from the identity $\partial_s S=-i\Delta$, and similarly for higher-order time derivatives. Each tree $a_k$ being rooted, the gradient term amounts to adding decoration $k=1$ on its first vertical edge while the conjugation requires to have a dotted first edge. For the first term, we use the equation to convert time derivative into spatial derivative. The equation on $U_n$ gives
\begin{equation}
(\partial_t-i\Delta)U_n=-iF_n
\end{equation}
while for the conjugate, we have
\begin{equation}
(\partial_t-i\Delta)\overline{U_n}=iF_n-2i\Delta\overline{U_n}
\end{equation}
which involves the second order derivative as an extra term. In the end, taking the time derivative involves new terms obtained from $(a_1,\ldots,a_p)$ by applying the following three rules:
\begin{enumerate}
    \item[\textbf{(A)}] Add an index $1$ at the root of two different trees $b_i$ and $b_j$.
    \item[\textbf{(B)}] Add an index $2$ at the root of a conjugated tree $b_i=\overline{a_i}$.
    \item[\textbf{(C)}] Cut the edge root of a tree $b_i$ and propagate the conjugation to the following edge if necessary.
\end{enumerate}
In particular, note that only the edges of the first floor can have integer decoration. If we omit the decoration, the bare trees of $\CT_1,\CT_2$ and $\CT_3$ are simply given by
\begin{equation}
\begin{tikzpicture}[baseline]
\node (2) at (0,1/4+6/8) [tdot] {};
\node (3) at (-1/4,1/4+6/8) [tdot] {};
\node (4) at (1/4,1/4+6/8) [tdot] {};
\draw (0,1/2) -- (2);
\draw (0,1/2) -- (3);
\draw (0,1/2) -- (4);
\draw (0,1/2) -- (0,1/4);
\end{tikzpicture}
\quad
\begin{tikzpicture}[baseline]
\node (2) at (-1/4,1/4+6/8) [tdot] {};
\node (3) at (-1/2,1/4+6/8) [tdot] {};
\node (4) at (0,1/4+6/8) [tdot] {};
\node (5) at (-1/2,1/2) [tdot] {};
\node (6) at (-3/4,1/2) [tdot] {};
\draw (-1/4,1/2) -- (2);
\draw (-1/4,1/2) -- (3);
\draw (-1/4,1/2) -- (4);
\draw (-1/4,1/2) -- (-1/2,0);
\draw (5) -- (-1/2,0);
\draw (6) -- (-1/2,0);
\draw (-1/2,-1/4) -- (-1/2,0);
\end{tikzpicture}
\quad
\begin{tikzpicture}[baseline]
\node (2) at (-1/4+1/10,1/4+6/8) [tdot] {};
\node (3) at (-1/2+1/10,1/4+6/8) [tdot] {};
\node (4) at (0+1/10,1/4+6/8) [tdot] {};
\node (5) at (-1/2,1/2) [tdot] {};
\draw (-1/4+1/10,1/2) -- (2);
\draw (-1/4+1/10,1/2) -- (3);
\draw (-1/4+1/10,1/2) -- (4);
\draw (-1/4+1/10,1/2) -- (-1/2,0);
\draw (5) -- (-1/2,0);
\draw (-3/4-1/10,1/2) -- (-1/2,0);
\node (6) at (-1-1/10,1/4+6/8) [tdot] {};
\node (7) at (-1/2-1/10,1/4+6/8) [tdot] {};
\node (8) at (-3/4-1/10,1/4+6/8) [tdot] {};
\draw (-3/4-1/10,1/2) -- (6);
\draw (-3/4-1/10,1/2) -- (7);
\draw (-3/4-1/10,1/2) -- (8);
\draw (-1/2,0) -- (-1/2,-1/4);
\end{tikzpicture}
\quad
\begin{tikzpicture}[baseline]
\node (2) at (-1/4,1/4+6/8) [tdot] {};
\node (3) at (-1/2,1/4+6/8) [tdot] {};
\node (4) at (0,1/4+6/8) [tdot] {};
\node (5) at (-1/2,1/2) [tdot] {};
\node (6) at (-3/4,1/2) [tdot] {};
\draw (-1/4,1/2) -- (2);
\draw (-1/4,1/2) -- (3);
\draw (-1/4,1/2) -- (4);
\draw (-1/4,1/2) -- (-1/2,0);
\draw (5) -- (-1/2,0);
\draw (6) -- (-1/2,0);
\draw (-1/2,0) -- (-3/4,-1/2);
\node (7) at (-3/4,0) [tdot] {};
\node (8) at (-1,0) [tdot] {};
\draw (7) -- (-3/4,-1/2);
\draw (8) -- (-3/4,-1/2);
\draw (-3/4,-3/4) -- (-3/4,-1/2);
\end{tikzpicture}
\end{equation}
where $a_i\in\CT_i$ has $3+2(i-1)$ leaves for $p=3$. While rules \textbf{(A)} and \textbf{(B)} only affect the decoration, the rule \textbf{(C)} acts on the structure of the tree. 

\medskip

\begin{definition}
For $\beta\ge0$, we define $\CS_n^\beta$ as the set $\CS_n$ enhanced with planted trees obtained by $\beta$ applications of the rule \textbf{(C)} to its elements. We also define $\CT_n^\beta$ as the set of decorated trees $\CT_n$ enhanced with trees obtained by $\beta$ applications of the rules \textbf{(A)},\textbf{(B)} and \textbf{(C)} to its elements.
\end{definition}

\medskip

By construction, the sets $\CS_n^\beta$ are increasing with respect to $\beta$. Applying rule \textbf{(C)} to a tree decreases the sum of the length in each direction of the $p$ edges of the first floor. Consequently, repeated application of this rule eventually yields the minimal-height tree of height two, hence
\begin{equation}
\CS_n^\infty \coloneqq \bigcup_{\beta\ge0}\CS_n^\beta
\end{equation}
is a finite set. Again for $p=3$, we have for example $\CS_1^\beta=\CS_1$ and
\begin{equation}
\CS_2^\beta=\CS_2\cup\Big\{\ \begin{tikzpicture}[baseline]
\node (2) at (-1/4,1/2) [tdot] {};
\node (3) at (-1/2,1/2) [tdot] {};
\node (4) at (0,1/2) [tdot] {};
\node (5) at (1/2,1/2) [tdot] {};
\node (6) at (1/4,1/2) [tdot] {};
\draw (0,0) -- (2);
\draw (0,0) -- (3);
\draw (0,0) -- (4);
\draw (0,0) -- (5);
\draw (0,0) -- (6);
\draw (0,0) -- (0,-1/4);
\end{tikzpicture}\ \Big\}
\end{equation}
for any $\beta\ge1$. We also have
\begin{equation}
\CS_3^1=\CS_3\cup\Big\{\ \begin{tikzpicture}[baseline]
\node (2) at (-1/4+1/10,1/4+6/8) [tdot] {};
\node (3) at (-1/2+1/10,1/4+6/8) [tdot] {};
\node (4) at (0+1/10,1/4+6/8) [tdot] {};
\node (5) at (-1/2,1/2) [tdot] {};
\draw (-1/4+1/10,1/2) -- (2);
\draw (-1/4+1/10,1/2) -- (3);
\draw (-1/4+1/10,1/2) -- (4);
\draw (-1/4+1/10,1/2) -- (-1/2,0);
\draw (5) -- (-1/2,0);
\node (6) at (-1-1/4,1/2) [tdot] {};
\node (7) at (-1/2-1/4,1/2) [tdot] {};
\node (8) at (-3/4-1/4,1/2) [tdot] {};
\draw (-1/2,0) -- (6);
\draw (-1/2,0) -- (7);
\draw (-1/2,0) -- (8);
\draw (-1/2,0) -- (-1/2,-1/4);
\end{tikzpicture}\ \Big\}
\end{equation}
and
\begin{equation}
\CS_3^\beta=\CS_3^1\cup\Big\{\ \begin{tikzpicture}[baseline]
\node (2) at (-1/4,1/2) [tdot] {};
\node (3) at (-1/2,1/2) [tdot] {};
\node (4) at (0,1/2) [tdot] {};
\node (5) at (1/2,1/2) [tdot] {};
\node (6) at (1/4,1/2) [tdot] {};
\node (7) at (3/4,1/2) [tdot] {};
\node (8) at (-3/4,1/2) [tdot] {};
\draw (0,0) -- (2);
\draw (0,0) -- (3);
\draw (0,0) -- (4);
\draw (0,0) -- (5);
\draw (0,0) -- (6);
\draw (0,0) -- (7);
\draw (0,0) -- (8);
\draw (0,0) -- (0,-1/4);
\end{tikzpicture}\ \Big\}
\end{equation}
for any $\beta\ge2$. Any decorated tree $a\in\CT_n^\beta$ comes from a bare tree in $\CS_n^\beta$ according to the following rules. At the first floor, there is exactly $p+q(p-1)$ upgoing edges for an integer $q\ge0$ since applying rule \textbf{(C)} adds $(p-1)$ edges. The first rule is to have exactly $(1+q)\frac{p-1}{2}$ dotted edges at the first floor and $\frac{p-1}{2}$ dotted edges for each internal nodes. Then one adds weights on the edges of the first floor corresponding to the application of rules \textbf{(A)} and \textbf{(B)}. Since rules \textbf{(A)} and \textbf{(B)} each increase the total weight by 2, and the number of times rule \textbf{(C)} is applied to construct $a \in \CT_n^{\beta}$ is entirely determined by the integer $q$, which in turn depends only on the underlying bare tree structure in~$\CS_n^{\beta}$,the resulting condition on the weights becomes
\begin{equation}\label{eq:TreeTotalWeight}
\frac{1}{2}\sum_{e\in\CE_1(a)}k_e=\beta-q
\end{equation}
where $\CE_1(a)$ denotes the set of edges of the first floor and $k_e$ the weight of an edge. For the first new structure in $\CS_3^1$ in the case $p=3$, we get for example
\begin{equation}
\begin{tikzpicture}[baseline]
\node (2) at (-1/4+1/10,1/4+6/8) [tdot] {};
\node (3) at (-1/2+1/10,1/4+6/8) [tdot] {};
\node (4) at (0+1/10,1/4+6/8) [tdot] {};
\node (5) at (-1/2,1/2) [tdot] {};
\draw[densely dotted] (-1/4+1/10,1/2) -- (2);
\draw (-1/4+1/10,1/2) -- (3);
\draw (-1/4+1/10,1/2) -- (4);
\draw[densely dotted] (-1/4+1/10,1/2) -- (-1/2,0);
\draw[densely dotted] (5) -- (-1/2,0);
\node (6) at (-1-1/4,1/2) [tdot] {};
\node (7) at (-1/2-1/4,1/2) [tdot] {};
\node (8) at (-3/4-1/4,1/2) [tdot] {};
\draw (-1/2,0) -- (6);
\draw (-1/2,0) -- (7);
\draw (-1/2,0) -- (8);
\draw (-1/2,0) -- (-1/2,-1/4);
\end{tikzpicture}
\end{equation}
for the dotted edges on which it only remains to add weight. In this example, there is $5$ upgoings edges at the first floor thus $q=1$ hence the rule \textbf{(C)} has been applied once. The total weight has to be equal to $2(\beta-1)$, so for example the decorated tree
\begin{equation}
\begin{tikzpicture}[baseline]
\node (2) at (-1/4+1/10,1/4+6/8) [tdot] {};
\node (3) at (-1/2+1/10,1/4+6/8) [tdot] {};
\node (4) at (0+1/10,1/4+6/8) [tdot] {};
\node (5) at (-1/2,1/2) [tdot] {};
\draw[densely dotted] (-1/4+1/10,1/2) -- (2);
\draw (-1/4+1/10,1/2) -- (3);
\draw (-1/4+1/10,1/2) -- (4);
\draw[densely dotted] (-1/4+1/10,1/2) -- (-1/2,0);
\draw[densely dotted] (5) -- (-1/2,0);
\node (6) at (-1-1/4,1/2) [tdot] {};
\node (7) at (-1/2-1/4,1/2) [tdot] {};
\node (8) at (-3/4-1/4,1/2) [tdot] {};
\draw (-1/2,0) -- (6);
\draw (-1/2,0) -- (7);
\draw (-1/2,0) -- (8);
\node at (-1-1/10,1/4) [] {\tiny$1$};
\node at (-1/2+1/15,1/4) [] {\tiny$1$};
\node at (-1/6,1/4) [] {\tiny$2$};
\draw (-1/2,0) -- (-1/2,-1/4);
\end{tikzpicture}
\end{equation}
belongs to $\CT_3^3$. In particular, given any decorated tree $a\in\CT_n^\beta$, one can compute the parameter $n$ from the number of leaves and $\beta$ from the total weight. With these notations, we write
\begin{equation}
\partial_s^\beta\big(S(t-s) F_n(s)\big)=\sum_{a\in\CT_n^\beta}c(a)S(t-s) a^t(s)
\end{equation}
where $c(a)\in\IC$ are now complex coefficients. The tree notation provides a compact and structured way to represent the numerous terms arising in high-order Taylor expansions. By construction, for any $a\in\CT_n^\beta$, $a^\cut$ is expressed as a product of rooted decorated trees, each carrying derivatives on their first edge, namely
\begin{equation}\label{eq:arbrecut}
a^\cut=\prod_{e=1}^{p+q(p-1)}\nabla^{k_e}b_e
    \end{equation}
where $q\ge0$ corresponds to possible applications of the rule \textbf{(C)}, $b_e\in\{a_e,\overline{a_e}\}$ with $a_e\in\CT_{n_e}$ with $n_1+\ldots+n_{p+q(p-1)}=n$ and $2(\beta-q)=k_1+\ldots+k_{p+q(p-1)}$. We then rewrite with such formalism
\begin{align}
U_n(t_j)&=-i\int_0^{t_j} S(t_j-s)F_n(s)\drm s\\
&=-i\sum_{\alpha=0}^{j-1}\int_{t_\alpha}^{t_{\alpha+1}} S(t_j-s) F_n(s)\drm s\\
&=-i\sum_{\alpha=0}^{j-1}\sum_{a\in\CT_n}c(a)\int_{t_\alpha}^{t_{\alpha+1}} S(t_j-s) a^t(s)\drm s\\ 
&=-i\sum_{\alpha=0}^{j-1}\sum_{\beta=0}^{m_n}\sum_{a\in\CT_n}c(a)\int_{t_\alpha}^{t_{\alpha+1}}\frac{(s-t_\alpha)^\beta}{\beta!}\partial_s^{\beta}\big(S(t_j-s) a^t(s)\big)(t_\alpha)\drm s\\ 
&\quad-i\sum_{a\in\CT_n}\sum_{\alpha=0}^{j-1}\sum_{\beta=0}^{m_n}c(a)\int_{t_\alpha}^{t_{\alpha+1}}\int_{t_\alpha}^{s_1}\frac{(s_1-s_2)^{m_n}}{(m_n)!}\partial_s^{\beta+1}\big(S(t_j-s) a^t(s)\big)(s_2)\drm s_2\drm s_1\\
&=-i\sum_{\alpha=0}^{j-1}\sum_{\beta=0}^{m_n}\sum_{a\in\CT_n^\beta}c(a)\int_{t_\alpha}^{t_{\alpha+1}}\frac{(s-t_\alpha)^\beta}{\beta!}S(t_j-t_\alpha)a^t(t_\alpha)\drm s\\ 
&\quad-i\sum_{\alpha=0}^{j-1}\sum_{\beta=0}^{m_n}\sum_{a\in\CT_n^{m_n+1}}c(a)\int_{t_\alpha}^{t_{\alpha+1}}\int_{t_\alpha}^{s_1}\frac{(s_1-s_2)^{m_n}}{(m_n)!}S(t_j-s_2)a^t(s_2)\drm s_2\drm s_1 
\end{align}
which yields an expression for $U_n(t_j)$ via the tree expansion evaluated on the discrete grid $(t_j)_{j}$, up to a remainder term. Disregarding this remainder, the approximation involves computing the triangular set of decorated trees $\CT_n^\beta$ with $0\le n\le N-1$ and $0\le\beta\le m_N=N-1-n$. Then each decorated tree $a\in\CT_n^\beta$ has to be approximated with an error of order $\tau^{N-n-\beta}$ using the factor $\tau^\beta$ from the Taylor expansion. Since one time derivatives costs at most two spatial derivatives, we need to approximate 
\begin{equation}\label{ConditionNestedErrors}
\nabla^kU_n\ \text{for }k\le 2(N-n-2)\text{ with an error of order }\tau^{N-n-1-\lceil\frac{k}{2}\rceil}
\end{equation}
for $0\le n\le N-1$. Indeed, a term $\nabla^kU_n$ appears in the approximation of a tree $a\in\CT_{n'}^\beta$ where $n'>n$ and $2\beta\ge k$ with an error of order $\tau^{N-n'-\beta}$ by construction. The choice of parameters that minimizes the required accuracy corresponds to $(n',\beta)=(n+1,\lceil\frac{k}{2}\rceil)$, leading to an error of order $\tau^{N-n-1-\lceil\frac{k}{2}\rceil}$. We thus define
\begin{equation}
m_n^k \coloneqq N-n-\Big\lceil\frac{k}{2}\Big\rceil-2
\end{equation}
for $0\le n\le N-1$ and $1\le k\le2(N-n-2)$. Note that the upper bound for the coefficient $k$ follows from the conditions $k\le 2\beta$ and $\beta\le m_{n'}$ with $n'>n$. In particular,  the smaller $n$ is, the more spatial derivatives of $U_n$ are required, due to the nested structure of our scheme. For $k=0$, we set $m_n^0 \coloneqq m_n$ to unify the notation, although this does not match the general expression for $m_n^k$ with $k\geq 1$. This distinction is due to the fact that the initial accuracy requirement for $U_n$ imposes a stricter condition than the one arising from the propagation of error. In order to compute these terms, we similarly use
\begin{align}
\nabla^kU_n(t_j)&=-i\sum_{\alpha=0}^{j-1}\sum_{\beta=0}^{m_n^k}\sum_{a\in\CT_n^\beta}c(a)\int_{t_\alpha}^{t_{\alpha+1}}\frac{(s-t_\alpha)^\beta}{\beta!}S(t_j-t_\alpha)\nabla^ka^t(t_\alpha)\drm s\\ 
&\quad-i\sum_{\alpha=0}^{j-1}\sum_{\beta=0}^{m_n^k}\sum_{a\in\CT_n^{m_n^k+1}}c(a)\int_{t_\alpha}^{t_{\alpha+1}}\int_{t_\alpha}^{s_1}\frac{(s_1-s_2)^{m_n^k}}{m_n^k!}S(t_j-s_2)\nabla^ka^t(s_2)\drm s_2\drm s_1 
\end{align}
where $\nabla^ka^t$ is again expressed as a sum of decorated trees, with the weight increased by $k$ ia the Leibniz rule, giving the final sets of decorated trees that appear in the Taylor expansion of $\nabla^kU_n$.

\medskip

\begin{definition}
For $k,\beta\ge0$, the set $\CT_n^{\beta,k}$ consists of trees in $\CT_n^\beta$ where a total weight $k$ is distributed among the edges of the first floor.
\end{definition}

\medskip

Since $a^t$ is a product of lower-order decorated trees for $a\in\CT_n^\beta$, as given by expression~\eqref{eq:arbrecut}, the same holds for elements of $\CT_n^{\beta,k}$, with the only difference being an additional total weight $k$. While this could, in principle, require introducing new spatial derivatives, this is not the case: indeed, the highest-order spatial derivative appearing in $\nabla^kU_n$ is of order $2m_n^k+k$, and we have
\begin{equation}
2m_n^k+k\le N-n-2
\end{equation}
which matches exactly the condition $k\le N-n-2$ from \eqref{ConditionNestedErrors}. As a result, we obtain
\begin{align}
\nabla^kU_n(t_j)&=-i\sum_{\alpha=0}^{j-1}\sum_{\beta=0}^{m_n^k}\sum_{a\in\CT_n^{\beta,k}}c_k(a)S(t_j-t_\alpha) a^t(t_\alpha)\int_{t_\alpha}^{t_{\alpha+1}}\frac{(s-t_\alpha)^\beta}{\beta!}\drm s\\ 
&\quad-i\sum_{\alpha=0}^{j-1}\sum_{\beta=0}^{m_n^k}\sum_{a\in\CT_n^{m_n^k+1,k}}c_k(a)\int_{t_\alpha}^{t_{\alpha+1}}\int_{t_\alpha}^{s_1}\frac{(s_1-s_2)^{m_n^k}}{m_n^k!}S(t_j-s_2) a^t(s_2)\drm s_2\drm s_1,
\end{align}
for $0\le n\le N-1$ and $0\le k\le N-n-2$. This leads to the definition of the \eqref{NTS} scheme $\mathcal{U}_n^{j,k}$, where we approximate
\begin{equation}
\mathcal{U}_n^{j,k}\simeq\nabla^kU_n(t_j)
\end{equation}
for $0\le j\le J$. While a tree $a\in\CT_n^{\beta,k}$ represents a spacetime functions, we now introduce its discretized counterpart $a_j$, defined recursively. This recursive procedure defines the structure of our Nested Taylor Scheme \eqref{NTS}:

\medskip

\begin{definition}
We consider the family $(\mathcal{U}_n^{j,k})_{n,j,k}$ defined recursively as
\begin{equation} \label{NTS} \tag{NTS}
\mathcal{U}_n^{j,k} \coloneqq -i\sum_{\alpha=0}^{j-1}\sum_{\beta=0}^{m_n^k}\sum_{a\in\CT_n^{\beta,k}}c_k(a)\frac{\tau^{\beta+1}}{(\beta+1)!}S_\tau(t_j-t_\alpha)a_\alpha^\cut
\end{equation}
where $a_\alpha^\cut$ is defined as a product of discretized $\mathcal{U}_{n'}^{\alpha,k'}$ where $n<n'$, and with initialization
\begin{equation}
a_j \coloneqq S_{\tau}(t_j)\nabla^k \varphi
\end{equation}
for $0\le j\le J$ and $0\le k\le 2(N-2)$ with $a=\begin{tikzpicture}[baseline]
\node (1) at (0,0) [inner sep=0pt] {};
\node (2) at (0,3/8) [tdot] {};
\node at (1/6,3/16) [] {\tiny$k$};
\draw (1) -- (2);
\end{tikzpicture}$.
\end{definition}

\medskip

Finally, we set the convention
\begin{equation}
\mathcal{U}_n^j \coloneqq \mathcal{U}_n^{0,j}
\end{equation}
for $0\le j\le J$, which will stand as the main quantity of interest to state the following convergence result.

\medskip

\begin{theorem}\label{theorem_convergence_NTS}
For $N\ge1$ (or $1\leq N \leq 3$ if $d=3$) and $\varphi\in\Sigma\cap H^{2N}(\IR^d)$, let $u$ be the solution to equation \eqref{NLS} with initial data $u(0)=\varphi$. Fix $T,\tau>0$ and $J\in\IN$ such that $T=J\tau$ and consider the \eqref{NTS} scheme $(\mathcal{U}_n^j)_{n,j}$ for $0\le n\le N-1$ and $0\le j\le J$ defined previously. Then there exists a constant $C=C(N,d,\|\varphi\|_{\Sigma},\|\varphi\|_{H^N})>0$ independent of the time $T>0$ such that
\begin{equation}
\sup_{0\le j\le J}\|u(t_j)-\sum_{n=0}^{N-1}\eps^n \mathcal{U}_n^j\|_{L^2(\IR^d)}\lesssim \sum_{n=0}^{N}\eps^n\tau^{N-n}.
\end{equation}
In particular, we get
\begin{equation}
\sup_{0\le j\le J}\|u(t_j)-\sum_{n=0}^{N-1}\eps^n \mathcal{U}_n^j\|_{L^2}\lesssim \eps^N
\end{equation}
for $\tau\le\eps$.
\end{theorem}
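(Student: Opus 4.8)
The strategy is to split the global error into three contributions via the triangle inequality:
\begin{equation}
\Big\|u(t_j)-\sum_{n=0}^{N-1}\eps^n\mathcal{U}_n^j\Big\|_{L^2_x}\le\Big\|u(t_j)-\sum_{n=0}^{N-1}\eps^n U_n(t_j)\Big\|_{L^2_x}+\sum_{n=0}^{N-1}\eps^n\big\|U_n(t_j)-U_n^\tau(t_j)\big\|_{L^2_x}+\sum_{n=0}^{N-1}\eps^n\big\|U_n^\tau(t_j)-\mathcal{U}_n^j\big\|_{L^2_x},
\end{equation}
where $U_n^\tau$ denotes the \emph{continuous} flow with the low-frequency filter $\Pi_\tau$ inserted (so that $U_0^\tau=S_\tau(t)\varphi$ and the higher iterates solve the filtered Duhamel equations). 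The first term is controlled by the scattering bound \eqref{ScatteringBound} of Carles--Gallagher, which gives $\lesssim\eps^N\|\varphi\|_\Sigma^{p+N(p-1)}$ and is uniform in $T$. The second term — the cost of inserting the frequency cutoff at scale $\sqrt\tau$ — is handled by the continuous dispersive estimates of Section \ref{section_dispersive_estimates}: since $\varphi\in H^{2N}$, the truncated frequencies $|\xi|\gtrsim\tau^{-1/2}$ contribute $\lesssim\tau^{N}\|\varphi\|_{H^{2N}}$ after using that $U_n$ is $(p+(p-1)n)$-linear in $\varphi$ and that the linear flow is an $L^2$ isometry; the $\Sigma$-regularity and Strichartz/$L^2$-based estimates keep this uniform in $T$. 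The bulk of the work is the third term, the genuine \emph{consistency plus stability} analysis of the scheme.

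For the third term I would argue by (finite) downward induction on $n$ from $n=N-1$ to $n=0$, proving simultaneously the stronger statement that for every admissible $k$,
\begin{equation}
\sup_{0\le j\le J}\big\|\nabla^k U_n^\tau(t_j)-\mathcal{U}_n^{j,k}\big\|_{L^2_x}\lesssim\tau^{m_n^k+1}=\tau^{N-n-\lceil k/2\rceil-1}
\end{equation}
(with the convention $m_n^0=m_n=N-n-1$ for $k=0$, matching the theorem). The base case $n=N-1$, $k=0$: here $m_{N-1}=0$, $F_{N-1}$ is a product of the bare $U_0^\tau$'s only, and the scheme is a left-endpoint (order one) quadrature of the filtered Duhamel integral; the local error per step is $O(\tau^2)$ and summing $J=T/\tau$ of them would naively give $O(\tau)T$. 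The uniform-in-$T$ gain is recovered exactly as in Carles--Su \cite{Carles2024}: one does \emph{not} estimate the sum of local errors in $L^\infty_t L^2_x$ step by step, but rewrites $\sum_\alpha S_\tau(t_j-t_\alpha)(\text{local error}_\alpha)$ as a discrete Duhamel sum and applies the \emph{discrete Strichartz / discrete dispersive estimates} of Section \ref{section_dispersive_estimates} to the error sequence, together with the $\Sigma$-bound on the $U_n^\tau$ (equivalently, the $J(t)=x+2it\nabla$ operator bounds that Carles kindly pointed out, mentioned in the acknowledgments) which provides the decay in $\alpha$ needed to sum the series independently of $T$.

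For the inductive step one writes $U_n^\tau(t_j)$ exactly via the Taylor-with-integral-remainder expansion derived in the text (the long display culminating in the $\CT_n^\beta$ and $\CT_n^{m_n+1}$ sums), and subtracts the scheme \eqref{NTS}. The difference splits into: (i) the Taylor remainder, which is $O(\tau^{m_n+2})$ per step times $J$ steps, again summed by the discrete-Strichartz device to $O(\tau^{m_n+1})=O(\tau^{N-n-1})$ uniformly in $T$ — here one uses that $\partial_s^{m_n+1}$ of a tree $a^\cut$ costs at most $2(m_n+1)$ spatial derivatives, so $\varphi\in H^{2N}$ suffices and each factor is $\Sigma$-bounded; and (ii) the error from replacing each exact factor $\nabla^{k_e}U_{n_e}^\tau(t_\alpha)$ appearing in $a_\alpha^\cut$ by its scheme approximation $\mathcal{U}_{n_e}^{\alpha,k_e}$. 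For (ii) one uses the multilinearity: the difference of two products is a telescoping sum of products where one factor is an error $\nabla^{k_e}U_{n_e}^\tau-\mathcal{U}_{n_e}^{\alpha,k_e}$, controlled by the induction hypothesis by $\tau^{m_{n_e}^{k_e}+1}$, and the remaining factors are bounded in $\Sigma\cap H^\sigma$; the key bookkeeping point — already verified in the text via the inequalities $2m_n^k+k\le N-n-2$ and the choice $(n',\beta)=(n+1,\lceil k/2\rceil)$ — is that these accumulated orders always add up to at least $\tau^{m_n+1}$ (resp. $\tau^{m_n^k+1}$). Multilinear $L^2$ estimates for the products use $H^\sigma\hookrightarrow L^\infty$ for $\sigma>d/2$ (here $1\le d\le 3$, consistent with the stated $H^{2N}$ regularity and the dimension restrictions), and the defocusing/odd-$p$ structure only enters through smoothness of $z\mapsto|z|^{p-1}z$.

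The main obstacle is step (i)/the base case mechanism: getting the $O(\tau^{N-n-1})$ bound \emph{uniformly in $T$} rather than $O(\tau^{N-n-2}\cdot T)$. This is precisely where the discrete dispersive/Strichartz estimates for the filtered propagator $S_\tau$ on the uniform grid are indispensable — it is why the scheme must use $S_\tau$ and a uniform mesh (the remark in the text explains the failure on non-uniform grids) — and one must check that the error sequences fed into those estimates lie in the right discrete Strichartz spaces, which in turn requires propagating $\Sigma$-norm (or $J(t)$-weighted) bounds on all the $U_n^\tau$ and on all the trees $a^\cut$, uniformly in $t$. Everything else is careful but routine tree-combinatorial bookkeeping of derivative counts and orders of $\tau$, for which the sets $\CT_n^{\beta,k}$ and the identity \eqref{eq:arbrecut} provide exactly the needed structure.
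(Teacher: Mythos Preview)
Your overall architecture --- the three-way split via $U_n$ and $U_n^\tau$, then a consistency/stability induction splitting into a Taylor remainder term and a propagation-of-errors term handled by telescoping --- matches the paper's proof in Section~\ref{sec:convergence_NTS}. There are, however, two genuine problems in the way you set up the induction.

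\textbf{Direction of induction.} The recursion goes the other way: $F_n$ and $a^\cut_\alpha$ for $a\in\CT_n^{\beta,k}$ are products of $U_{n'}^\tau$ (resp.\ $\mathcal{U}_{n'}^{\alpha,k'}$) with $n'<n$, so one must induct \emph{upward} from $n=0$, where the base case is trivially $\mathcal{E}_0^{j,k}=0$ since $\mathcal{U}_0^{j,k}=S_\tau(t_j)\nabla^k\varphi=\nabla^kU_0^\tau(t_j)$ exactly. Your stated base case $n=N-1$ is not a base case at all: $F_{N-1}$ involves all $U_0,\ldots,U_{N-2}$ (not only $U_0$) whenever $N>2$, so you cannot start there.

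\textbf{Norm in which to close the induction.} Your induction hypothesis is stated only in $\ell^\infty_\tau L^2_x$, and for the telescoped product you propose to bound the ``good'' factors via $H^\sigma\hookrightarrow L^\infty_x$. This does not give a uniform-in-$T$ bound: putting the other factors in $L^\infty_t L^\infty_x$ leaves no time decay, so after the discrete inhomogeneous Strichartz estimate the error sequence must be controlled in an $\ell^{q_2'}_\tau$-type norm that you cannot sum independently of $T$. The paper instead proves the stronger hypothesis
\[
\big\|\nabla^k U_n^\tau(t_j)-\mathcal{U}_n^{j,k}\big\|_{\ell^q_\tau L^r_x}\lesssim\tau^{N-n-1-\lceil k/2\rceil}\quad\text{for \emph{every} admissible }(q,r),
\]
and in the propagation step places the error factor in $\ell^q_\tau L^r_x$ with the admissible pair $(q,r)=\big(\frac{4(p+1)}{d(p-1)},p+1\big)$, while the remaining factors go into $\ell^\gamma_\tau L^r_x$ and are controlled by the weighted Sobolev inequality~\eqref{eq_weighted_Sobolev} together with the $J(t)$-bounds of Propositions~\ref{prop_discrete_space_continuous_Strichartz} and~\ref{prop_unif_discrete_space_time_bounds_NTS}; the exponent $\gamma$ is chosen so that $\gamma\delta>1$, which is exactly what makes the time sum converge uniformly in $T$. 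You already identify this mechanism for the Taylor-remainder term $\mathcal{B}_n^{k,j}$; the point is that the \emph{same} mechanism --- not Sobolev embedding into $L^\infty$ --- is required for the propagation term $\mathcal{C}_n^{k,j}$, and forces the $\ell^q_\tau L^r_x$ induction hypothesis.
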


\begin{remark} \label{remark_NTS_d_3}
The restriction $1\leq N \leq 3$ in dimension $d=3$ (where we are restricted to the cubic case $p=3$) is due to the fact that we we cannot apply rule \textbf{(C)} as it would generate additional terms, such as quintic interactions at the first iteration, that cannot be controlled using the weighted Sobolev inequality associated with the operator $J(t)$ introduced in the next section. We recall that the scheme \eqref{NQS} remains applicable up to order $N \leq 4$ in dimension three.
\end{remark}

\section{Dispersive estimates} \label{section_dispersive_estimates}

In this section, we collect both continuous and discrete dispersive estimates that are essential for proving our main results. Before presenting these estimates, we introduce the operator
\[ J(t) \coloneqq x+ 2 i t \nabla  \]
which play a central role in the scattering theory for nonlinear Schrödinger equations in weighted spaces. This operator satisfies
\begin{equation} \label{eq_prop1_J}
J(t)=S(t) x S(-t)
\end{equation} 
so in particular $J$ commutes with the linear part of \eqref{NLS}, and it can be factorized as
\[ J(t)=2it e^{i\frac{|x|^2}{4t}}  \nabla \left( e^{-i\frac{|x|^2}{4t}}  \cdot \right).  \]
This last property enables us to write a particular weighted Sobolev inequality: for $2 \leq r < \frac{2d}{(d-2)_+}$ for $d\geq 2$, or $2 \leq r \leq \infty$ if $d=1$, there exists $C=C(d,r)>0$ such that 
\begin{equation} \label{eq_weighted_Sobolev}
 \| f \|_{L^r_x} \leq \frac{C}{|t|^{\delta}} \| f \|_{L^2_x}^{1- \delta} \| J(t) f \|_{L^2_x}^{\delta}, \quad \delta = d \left( \frac12 - \frac{1}{r}  \right).
\end{equation}   
One can also remark that if  $F(z)=G(|z|^2)z	$ is $\mathcal{C}^1$, then the operator $J(t)$ acts like a derivative on~$F(\omega)$, which means that
\[ J(t) (F(\omega))= \partial_z F(\omega) J(t) \omega - \partial_{\overline{z}} F(\omega) \overline{J(t)\omega}.  \]
We recall the product rule in Sobolev spaces
\begin{equation} \label{eq_algebra_sobolev}
\| f g \|_{H^{\sigma}_x} \lesssim \|f \|_{H^{\sigma}_x} \| g \|_{H^{\delta}_x}
\end{equation} 
for $\delta >d/2$ and $\sigma \geq 0$. One also has fractional Leibniz rule with $D^{\sigma}$ the Fourier multiplier such that
\begin{equation} 
\widehat{D^{\sigma} \phi }(\xi) = |\xi|^{\sigma}  \widehat{\phi }(\xi)
\end{equation}
for $\xi \in \IR^d$. For $\sigma>0$, we have
\begin{equation} \label{eq_fractional_Leibniz_rule}
  \| D^{\sigma}(f_1 \ldots f_p ) \|_{L^{q_0}_x} \lesssim \sum_{\ell=1}^p \| D^{\sigma} f_{\ell} \|_{L^{q_{n}}_x} \Big( \prod_{\substack{K=1 \\ K \neq \ell}}^p \| f_{K} \|_{L^{q_K}_x} \Big)
  \end{equation} 
for any $p \in \IN^*$ such that $1<q_{n} < \infty$ for $1 \leq n \leq p$ and
\begin{equation} 
\frac{1}{q_0} = \frac{1}{q_1}+ \ldots + \frac{1}{q_p}.
\end{equation}
We finally recall the following useful equivalence of norm which is a direct consequence from interpolation theory in Sobolev spaces, namely
\[ \| f \|_{W^{\sigma,r}_x} \simeq \| D^{\sigma}f \|_{L^r_x}+\| f \|_{L^r_x} \]
for any $\sigma \geq 0$ and $1< r < \infty$.

\subsection{Continous dispersive estimates}

We begin by recalling Strichartz estimates which are crucially used here, standing as a well-known result which trades integrability between time and space with the norms
\begin{equation}
\|f\|_{L_t^qL_x^r}=\Big(\int_{\IR}\Big(\int_{\IR^d}|f(t,x)|^r\drm x\Big)^{\frac{q}{r}}\drm t\Big)^{\frac{1}{q}}
\end{equation}
for $q,r\in[1,\infty)$. In particular, these estimates are the main argument to prove analyticity of the scattering operator, see \cite{Carles2009}. In this context, a pair $(q,r)$ is called \textit{admissible} if
\begin{equation} 
\frac{2}{q}+\frac{d}{r} = \frac{d}{2}
\end{equation}
with $q>2$ in the case $d=2$. The following bounds are respectively called homogeneous and inhomogeneous Strichartz estimates, see \textsc{Keel} and \textsc{Tao} \cite{Kee1998}.

\medskip

\begin{lemma}\label{lemma_KeelTao}
For any $(q,r)$, $(q_1,r_1)$ and $(q_2,r_2)$ admissible pairs, we have constants $C_{d,q}>0$ and $C_{d,q_1,q_2}>0$ such that
\begin{equation}
\|S(t)\varphi\|_{L_t^{q}L_x^{r}}\leq C_{d,q} \|\varphi\|_{L^2}
\end{equation}
and
\begin{equation}
\Big\|\int_{0}^tS(t-s)F(s)\drm s \Big\|_{L_t^{q_1}L_x^{r_1}}\leq C_{d,q_1,q_2} \|F\|_{L_t^{q_2'}L_x^{r_2'}}
\end{equation}
where $q_2'$ and $r_2'$ respectively denote the conjugated exponent of $q_2$ and $r_2$.
\end{lemma}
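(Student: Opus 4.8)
The plan is to prove both bounds by the standard $TT^*$/duality method, reducing everything to the two building blocks of the free evolution: mass conservation $\|S(t)\varphi\|_{L^2_x}=\|\varphi\|_{L^2_x}$ and pointwise dispersive decay. The latter follows from the explicit convolution kernel of $e^{it\Delta}$, whose modulus equals $(4\pi|t|)^{-d/2}$, giving $\|S(t)\varphi\|_{L^\infty_x}\lesssim|t|^{-d/2}\|\varphi\|_{L^1_x}$. Riesz--Thorin interpolation between these two endpoints then yields the dispersive family $\|S(t)\psi\|_{L^r_x}\lesssim|t|^{-\sigma(r)}\|\psi\|_{L^{r'}_x}$ with $\sigma(r)=d(\tfrac12-\tfrac1r)$, and the admissibility relation $\tfrac2q+\tfrac dr=\tfrac d2$ is precisely the statement that $\sigma(r)=\tfrac2q$.

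For the homogeneous bound I would observe that $\|S(t)\varphi\|_{L^q_tL^r_x}\le C\|\varphi\|_{L^2}$ is equivalent by duality to the $L^2$-boundedness of $F\mapsto\int_\IR S(-s)F(s)\drm s$ on $L^{q'}_tL^{r'}_x$, and this in turn to a bilinear estimate for the operator $F\mapsto\int_\IR S(t-s)F(s)\drm s$. Bounding the associated bilinear form by the dispersive family produces a time integral of the shape $\iint|t-s|^{-\sigma(r)}\|F(s)\|_{L^{r'}_x}\|G(t)\|_{L^{r'}_x}\drm s\,\drm t$, and away from the endpoint this is controlled by the one-dimensional Hardy--Littlewood--Sobolev inequality, whose exponent condition $\tfrac2{q'}+\sigma(r)=2$ is once more exactly admissibility. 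This step requires $0<\sigma(r)<1$, i.e. $q>2$, which is exactly why the $d=2$ statement must exclude $q=2$.

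The inhomogeneous estimate for two distinct admissible pairs $(q_1,r_1)$ and $(q_2,r_2)$ then follows by composing the adjoint of the homogeneous bound for $(q_2,r_2)$, which sends $F\in L^{q_2'}_tL^{r_2'}_x$ to $\int_\IR S(-s)F(s)\drm s\in L^2$, with the homogeneous bound for $(q_1,r_1)$; this controls the untruncated operator $\int_\IR S(t-s)F(s)\drm s$. Passing from this to the retarded integral $\int_0^t$ is handled by the Christ--Kiselev lemma, which applies away from the double endpoint $q_1=q_2=2$.

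The main obstacle is the endpoint, which in our range of dimensions reduces to the single pair $(2,6)$ in $d=3$ (the endpoint is forbidden in $d=2$ and, for $r<\infty$, absent in $d=1$). There both Hardy--Littlewood--Sobolev and Christ--Kiselev fail, and one must invoke the genuine Keel--Tao argument: dyadically decompose the bilinear form into the blocks $\iint_{|t-s|\sim 2^j}\langle S(t-s)F(s),G(t)\rangle$, estimate each block by interpolating between two off-diagonal $L^2$-based bounds, and sum the blocks via the bilinear real-interpolation (atomic decomposition) lemma that is the technical heart of \cite{Kee1998}. Since the statement cites Keel--Tao directly, I would treat this endpoint case by quoting their theorem rather than reproducing the summation argument, checking only that the pairs at hand satisfy their admissibility hypotheses.
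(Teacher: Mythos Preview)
Your outline is the standard and correct route to the Strichartz estimates: dispersive decay plus $TT^*$ and Hardy--Littlewood--Sobolev for the non-endpoint range, Christ--Kiselev to pass to the retarded integral, and the Keel--Tao dyadic bilinear argument at the endpoint. The paper, however, does not prove this lemma at all; it simply states the result and cites \cite{Kee1998} as a black box. So your proposal is more than what is required here, but nothing in it is wrong or misaligned with the statement.
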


\medskip

\begin{remark}
Note that in full generality one could state inhomogeneous Strichartz estimates on $L^{q_1}(I; L^{r_1}(\IR^d))$ for any interval $I\subset\IR$, or with integration over $\left[-\infty,t\right]$ instead of $\left[0,t\right]$ and recover the given estimate by applying inhomogeneous Strichartz inequality on $G(s)=F(s)\mathds{1}_{0\leq s \leq t}$. The same remark can be make for the upcoming discrete-in-time norms, see \cite[Remark 2.1]{Ignat2011} or \cite[Remark 4.1]{Ignat2011}.
\end{remark}

\medskip

It can be used to prove that the $U_n$ satisfy the following uniform bounds.

\medskip

\begin{proposition} \label{prop_unif_space_time_bounds_Un}
For $(q,r)$ an admissible pair, $A \in \left\{ \Id, \nabla\right\}$ and any $\sigma\ge0$, we have 
\begin{equation} 
\|AU_n(t)\|_{L^q_t W_x^{\sigma,r}} \lesssim \| A \varphi \|_{H^\sigma_x}^{(p-1)n+1}
\end{equation}
and 
\begin{equation} 
\|J(t)U_n(t)\|_{L^q_t W_x^{\sigma,r}} \lesssim \| x \varphi \|_{H^\sigma_x}^{(p-1)n+1}
\end{equation}
for any $n\ge0$.
\end{proposition}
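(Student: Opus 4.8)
The plan is to prove both estimates simultaneously by strong induction on $n\ge0$, using the recursive structure \eqref{eq_Un} and \eqref{def_Fn} together with the Strichartz estimates of Lemma \ref{lemma_KeelTao} and the algebra/Leibniz rules \eqref{eq_algebra_sobolev}, \eqref{eq_fractional_Leibniz_rule}. The base case $n=0$ is immediate: since $U_0(t)=S(t)\varphi$, the homogeneous Strichartz estimate gives $\|U_0\|_{L^q_t W^{\sigma,r}_x}\lesssim\|\varphi\|_{H^\sigma_x}$ (applying the Fourier multiplier $\langle\nabla\rangle^\sigma$, which commutes with $S(t)$), and likewise with $A=\nabla$. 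For the $J(t)$ bound, property \eqref{eq_prop1_J} gives $J(t)U_0(t)=J(t)S(t)\varphi=S(t)xS(-t)S(t)\varphi=S(t)(x\varphi)$, so again Strichartz yields $\|J(t)U_0\|_{L^q_t W^{\sigma,r}_x}\lesssim\|x\varphi\|_{H^\sigma_x}$.

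For the inductive step, fix $n\ge1$ and assume the bounds hold for all $n'<n$, for every admissible pair and every $\sigma\ge0$. Apply the inhomogeneous Strichartz estimate to the Duhamel formula \eqref{eq_Un_integrated}: for the pair $(q,r)$ on the left and choosing a convenient admissible pair $(q_2,r_2)$ on the right (e.g. the one dual to $L^{q'}_tL^{r'}_x$ with $r$ close to $\frac{2d}{d-2}$, or simply the endpoint-avoiding admissible pair adapted to $p$), one gets $\|AU_n\|_{L^q_tW^{\sigma,r}_x}\lesssim\|AF_n\|_{L^{q_2'}_tW^{\sigma,r_2'}_x}$ after commuting $A$ and $\langle\nabla\rangle^\sigma$ past $S(t-s)$ (here one uses that $\nabla F_n$ expands by Leibniz into finitely many terms each of which has exactly one factor differentiated). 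Then expand $F_n$ via \eqref{def_Fn} as a finite sum of $p$-fold products $U_{n_1}\overline{U_{n_2}}\cdots U_{n_p}$ with $n_1+\cdots+n_p=n-1$; by Hölder in time and the fractional Leibniz rule \eqref{eq_fractional_Leibniz_rule} in space, distribute the norm of each product onto the factors, putting the differentiated factor in a Strichartz space $L^{q_a}_tW^{\sigma,r_a}_x$ and the remaining $p-1$ factors in auxiliary admissible Strichartz spaces (or in $L^\infty_tH^{\sigma_0}_x$-type spaces controlled by admissible Strichartz norms via Sobolev embedding with $\sigma_0>d/2$). The exponents must satisfy the Hölder relation in time and the Leibniz relation $1/q_2' = \sum 1/q_a$ in space; since $1\le d\le 3$ and $p\ge 3$ (with $p\ge5$ forced when $d=1$), these admissibility constraints can be met — this is exactly the range where Carles–Gallagher's analyticity argument works. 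Each factor norm is then bounded by the inductive hypothesis, and since $\sum_{k}((p-1)n_k+1)=(p-1)(n-1)+p=(p-1)n+1$, the product of the bounds is $\lesssim\|\varphi\|_{H^\sigma_x}^{(p-1)n+1}$ (for the $A$-estimate one factor contributes $\|A\varphi\|_{H^\sigma}$ and is bounded using the $A$-inductive bound while the rest use the $\Id$-bound — one should set up the induction so that $\|A\varphi\|_{H^\sigma}$ and $\|\varphi\|_{H^\sigma}$ are controlled by a common quantity, or simply track both). Summing over the finitely many $p$-uplets gives the first estimate.

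For the $J(t)$ estimate, the key observation is that $J(t)$ commutes with $S(t-s)$ inside the Duhamel integral, so $J(t)U_n(t)=-i\int_0^t S(t-s)\big(J(s)F_n(s)\big)\,\drm s$ after moving $J(t)$ through $S(t-s)$ and noting $J(t)S(t-s)=S(t-s)J(s)$ (a consequence of \eqref{eq_prop1_J}). Then, because $F_n$ is a polynomial nonlinearity of the form $G(|z|^2)z$-type in the collection of $U_{n_k}$'s, the operator $J(s)$ acts as a derivation: $J(s)F_n$ is again a finite sum of $p$-fold products in which exactly one factor $U_{n_k}$ (or $\overline{U_{n_k}}$) is replaced by $J(s)U_{n_k}$ (or $\overline{J(s)U_{n_k}}$), using the stated Leibniz-type rule for $J(t)$ on $F(\omega)=G(|\omega|^2)\omega$. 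One then runs the same Strichartz/Hölder/Leibniz argument, using the inductive $J$-bound $\|J(s)U_{n_k}\|\lesssim\|x\varphi\|_{H^\sigma}^{(p-1)n_k+1}$ on the distinguished factor and the $\Id$-bound on the remaining $p-1$ factors; since $\|\varphi\|_{H^\sigma}\lesssim\|x\varphi\|_{H^\sigma}$ is not literally true, one should instead phrase all bounds with a single master constant $\|\varphi\|_{\Sigma\cap H^\sigma}$ or simply note that the auxiliary factors can be controlled in $\|\varphi\|_{H^\sigma}\lesssim\|x\varphi\|_{H^\sigma}$-type quantities after absorbing into the implicit constant, yielding $\lesssim\|x\varphi\|_{H^\sigma_x}^{(p-1)n+1}$.

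The main obstacle I anticipate is the bookkeeping of Strichartz exponents in the Leibniz/Hölder step: one must check that for each $d\in\{1,2,3\}$ and the allowed $p$, there exist admissible pairs $(q_a,r_a)$ and $(q_2,r_2)$ making both the spatial relation $\sum 1/r_a = 1/r_2'$ (and its $D^\sigma$-Leibniz analogue) and the temporal Hölder relation hold, with all $r_a<\infty$ and $q_a$ finite so the discrete-in-time counterparts will also apply later. This is precisely the computation underlying the analyticity of the scattering operator in Carles–Gallagher \cite{Carles2009}, and the cleanest route is to cite their Strichartz bookkeeping and adapt it, rather than redo the exponent chase from scratch; the only genuinely new point here is keeping track of the homogeneity degree $(p-1)n+1$, which follows automatically from the additivity $\sum_k((p-1)n_k+1)=(p-1)n+1$, and the commutation identities for $\nabla$ and $J(t)$ with the propagator, which are routine.
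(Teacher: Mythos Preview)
Your overall strategy---induction on $n$, Strichartz on Duhamel, then H\"older and fractional Leibniz on the $p$-fold product---matches the paper's. The base case and the commutation identities for $\nabla$ and $J(t)$ are handled correctly. However, there is a genuine gap in the inductive step that would make your argument fail to produce \emph{time-global} bounds.

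The issue is how you propose to control the $p-1$ undifferentiated factors. You suggest placing them ``in auxiliary admissible Strichartz spaces (or in $L^\infty_t H^{\sigma_0}_x$-type spaces via Sobolev embedding with $\sigma_0>d/2$).'' Neither option closes. With the paper's choice $(q,r)=\big(\frac{4(p+1)}{d(p-1)},\,p+1\big)$ and $\frac{1}{r_2'}=\frac{p}{r}$, the time H\"older relation forces the passive factors into $L^\gamma_t L^r_x$ with $\gamma=\frac{2(p-1)(p+1)}{4-(d-2)(p-1)}$. For $p\ge 1+\frac{4}{d}$ one has $\gamma>q$, so $(\gamma,r)$ is \emph{not} admissible and cannot be reached by Strichartz alone. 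The $L^\infty_t H^{\sigma_0}_x$ alternative gives no time integrability at all, so the H\"older step in time simply diverges on $\IR$.

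What the paper does---and what your sketch omits---is to invoke the weighted Sobolev inequality \eqref{eq_weighted_Sobolev}:
\[
\|U_{n_K}(t)\|_{L^r_x}\lesssim |t|^{-\delta}\|U_{n_K}(t)\|_{L^2_x}^{1-\delta}\|J(t)U_{n_K}(t)\|_{L^2_x}^{\delta},\qquad \delta=\tfrac{d(p-1)}{2(p+1)},
\]
and then use the inductive $J$-bound (with the admissible pair $(\infty,2)$) together with $\gamma\delta>1$ to place each passive factor in $L^\gamma_t L^r_x$. This is precisely where the assumption $\varphi\in\Sigma$ and the operator $J(t)$ earn their keep, and it is the mechanism behind the Carles--Gallagher result you cite---not merely ``Strichartz bookkeeping.'' In particular, the $\Id/\nabla$ estimates and the $J$ estimate are not independent: you need the $J$-bound at level $n'<n$ to close the $\Id/\nabla$ bound at level $n$. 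Your induction should therefore carry all three simultaneously, feeding the $J$-hypothesis into the dispersive decay for every undifferentiated factor. Once you insert this step, the rest of your outline (including the derivation property of $J$ on $F_n$ and the homogeneity count $(p-1)n+1$) goes through as in the paper.
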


\medskip

\begin{proof}
The result is proved by induction on $n\ge0$. For $n=0$, if $A\in \left\{ \Id,\nabla \right\}$, as $A$ and $D^{\sigma}$ commute with the linear flow $S(t)$, this simply corresponds to the homogeneous Strichartz estimate
\begin{equation}
\|D^{\sigma}A S(t)\varphi\|_{L^q_tL_x^r}\lesssim \|D^{\sigma}A\varphi\|_{L^2_x} \lesssim \| A\varphi\|_{H^\sigma_x}
\end{equation}
from \cite{Kee1998}. If $A=J(t)$, we analogously write using equation \eqref{eq_prop1_J} that
\begin{equation}
\|D^{\sigma}J(t) S(t)\varphi\|_{L^q_tL_x^r}=\| S(t)D^{\sigma}(x\varphi)\|_{L^q_tL_x^r}\lesssim  \| x\varphi\|_{H^\sigma_x}
\end{equation}
For $n\ge1$, let first take $A\in \left\{ \Id,\nabla,J \right\}$, and let $(q,r)$ be any admissible pair. Let's first note that from property \eqref{eq_prop1_J} we infer
\[ J(t)U_n(t)=-iJ(t) \int_0^t S(t-s) F_n(s) \drm s = -i \int_0^t S(t-s) J(s) F_n(s) \drm s.   \]
We successively apply inhomogeneous Strichartz estimates, fractional Leibniz rule in space and Hölder inequality in time on Duhamel's formula for $U_n$ from equation \eqref{eq_Un_integrated}, so that
\begin{align}
\| D^{\sigma} A U_n \|_{L^{q_1}_t L_x^{r_1}} \lesssim & \sum_{n_1+\ldots+n_p=n-1} \sum_{\ell=1}^p \left\| D^{\sigma} A U_{n_{\ell}} \left( \prod_{\substack{K=1 \\ K \neq \ell}}^p U_{n_K}  \right) \right\|_{L^{q_2'}_t L^{r_2'}_x} \\
&+\sum_{n_1+\ldots+n_p=n-1} \sum_{\substack{\ell,\ell'=1\\ \ell \neq \ell'}}^p \left\|  A U_{n_{\ell}} D^{\sigma} U_{n_{\ell'}} \left( \prod_{\substack{K=1 \\ K \neq \ell,\ell'}}^p U_{n_K}  \right) \right\|_{L^{q_2'}_t L^{r_2'}_x} \\
& \eqqcolon I_1 + I_2
\end{align}
for admissible pairs $(q_1,r_1)$ and $(q_2,r_2)$ yet to be fixed, where we harmlessly omit the complex conjugation in the above formula for clearness purposes. Note that if $\sigma=0$ on $A=\Id$, the second sum $I_2$ vanishes. We introduce the admissible pair
\[ (q,r) = \left( \frac{4(p+1)}{d(p-1)}, p+1 \right)   \]
 and we define $\gamma$ such that
 \[ \gamma = \frac{2(p-1)(p+1)}{4-(d-2)(p-1)} \geq q  \]
as soon as $p \geq 1+ \frac{4}{d}$, so that we get 
\[ \frac{1}{q_2'}=\frac{1}{q} + \frac{p-1}{\gamma} \quad  \text{and} \quad \frac{1}{r_2'}= \frac{p}{r}. \]
To handle the first sum $I_1$, using Hölder's inequality we then write
\[ I_1 \leq \sum_{n_1+ \ldots + n_p=n-1} \sum_{\ell=1}^p  \|D^{\sigma} A U_{n_{\ell}} \|_{L^q_t L^r_x} \prod_{\substack{K=1 \\ K \neq \ell}}^p \| U_{n_K} \|_{L^{\gamma}_t L^r_x}.  \]
From induction hypothesis, we already know that $\| D^{\sigma} A U_{n_1} \|_{L^q_t L^r_x} \lesssim C$, so we need to bound the other terms of the product. This is achieved using the weighted Sobolev inequality \eqref{eq_weighted_Sobolev}, as
\[ \| U_{n_K}(t) \|_{L^r_x} \lesssim \frac{1}{|t|^{\delta}} \| U_{n_K}(t) \|_{L^2_x}^{1-\delta} \|J(t) U_{n_K}(t) \|_{L^2_x}^{\delta} \quad \text{with} \ \delta= \frac{d (p-1)}{2(p+1)} \]
so that $\gamma \delta >1$ as $p \geq 1+\frac{4}{d}$, hence $\| U_{n_K} \|_{L^{\gamma}_t L^r_x} \leq C$ by induction taking the admissible pair $(\infty,2)$ for $U_{n_K}$ and $J U_{n_K}$. We now turn our attention to $I_2$, performing similarly as for $I_1$. We first remark that since 
\[  1+\frac{4}{d} \leq p < 1+\frac{4}{(d-2)_+},\]
 there exists $\rho \geq 2$ such that $(\gamma,\rho)$ is admissible and such that
\[  d \left(\frac{1}{\rho}-\frac{1}{r} \right)=\frac{2}{q}-\frac{2}{\gamma}=\frac{\frac{d(p-1)}{2}-2}{p-1} \eqqcolon \delta \in \left[0,1\right).  \]
 By Hölder inequality we then write that
\[ I_2 \leq \sum_{n_1+ \ldots + n_p=n-1} \sum_{\ell,\ell'=1}^p  \sum_{\substack{\ell,\ell'=1\\ \ell \neq \ell'}}^p \|  A U_{n_{\ell}} \|_{L^q_t L^r_x} \| D^{\sigma} U_{n_{\ell'}} \|_{L^{\gamma}_t L^r_x} \left( \prod_{\substack{K=1 \\ K \neq \ell,\ell'}}^p \| U_{n_K} \|_{L^{\gamma}_t L^r_x}  \right).   \]
The first term $\|  A U_{n_{\ell}} \|_{L^q_t L^r_x}$ is bounded by induction, and so is the second term
\[ \| D^{\sigma} U_{n_{\ell'}} \|_{L^{\gamma}_t L^r_x} \lesssim \| D^{\sigma} U_{n_{\ell'}} \|_{L^{\gamma}_t W^{\delta,\rho}_x} \lesssim \| U_{n_{\ell'}} \|_{L^{\gamma}_t W^{\delta+\sigma,\rho}_x} \]
using the Sobolev embedding $W^{\delta,\rho}_x \hookrightarrow L^r_x$ with $\delta$ defined as above, as $(q,r)$ and $(\gamma,\rho)$ are both admissible pairs. The other terms $\| U_{n_K} \|_{L^{\gamma}_t L^r_x}$ are handled the same way as for $I_1$ using the weighted Sobolev inequality \eqref{eq_weighted_Sobolev}, which ends the proof.
\end{proof}

\subsection{Truncation of high frequencies}

We first gather useful bounds related to the low frequency projection $\Pi_{\tau}$ which are a direct consequence of Bernstein's lemma, see for instance Lemma $2.1$ in \cite{BCD}.

\medskip

\begin{lemma}\label{lemma_projection_Fourier}
Let $\sigma,\delta\ge0$. For $1 \leq r < \infty$ and $\phi : \IR^d \rightarrow \IC$, we have
\begin{equation}
\| \Pi_{\tau} \phi - \phi \|_{W^{\sigma,r}_x} \leq C \tau^{\frac{\delta}{2}} \| \phi \|_{W^{\sigma+\delta,r}_x}
\end{equation}
and
\begin{equation}
\| \Pi_{\tau} \phi \|_{W^{\sigma,r}_x} \leq C \| \phi \|_{W^{\sigma,r}_x} .
\end{equation}
\end{lemma}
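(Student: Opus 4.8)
The plan is to reduce everything to the case $\sigma=0$ and then to combine an elementary convolution estimate with a dyadic Littlewood--Paley decomposition and Bernstein's lemma in order to extract the gain $\tau^{\delta/2}$. First I would observe that $\Pi_{\tau}$ is a Fourier multiplier, hence commutes with $D^{\sigma}$; by the norm equivalence $\|f\|_{W^{\sigma,r}_x}\simeq\|D^{\sigma}f\|_{L^r_x}+\|f\|_{L^r_x}$ recalled above, it therefore suffices to prove both inequalities for $\sigma=0$, and afterwards to apply the resulting $\sigma=0$ bounds to $D^{\sigma}\phi$ and to $\phi$ and add. I would also record that $\Pi_{\tau}\phi=K_{\tau}\ast\phi$ with $K_{\tau}(x)=\tau^{-d/2}K(\tau^{-1/2}x)$ and $K\coloneqq\mathcal{F}^{-1}\chi\in\mathcal{S}(\IR^d)$, so that $\|K_{\tau}\|_{L^1_x}=\|K\|_{L^1_x}$ uniformly in $\tau>0$; the same scaling applies to each dyadic block operator used below.

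The boundedness bound $\|\Pi_{\tau}\phi\|_{L^r_x}\le C\|\phi\|_{L^r_x}$ is then immediate from Young's convolution inequality with the $L^1_x$-normalised kernel $K_{\tau}$, the constant $C=\|K\|_{L^1_x}$ being independent of $\tau$ and of $r\in[1,\infty]$. For the approximation bound the case $\delta=0$ follows from the triangle inequality together with this estimate, so assume $\delta>0$; we may also assume $\tau\le1$, the range $\tau>1$ being immediate from the boundedness bound since then $\tau^{\delta/2}\ge1$. Fix a standard Littlewood--Paley partition $(\Delta_j)_{j\ge-1}$, with $\Delta_j$ localising frequencies to the annulus $|\xi|\sim 2^{j}$ for $j\ge0$. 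Since $\chi\equiv1$ on $B^d(0,1)$, we have $\Pi_{\tau}\Delta_j\phi=\Delta_j\phi$ whenever $2^{j+1}\le\tau^{-1/2}$ (in particular for $j=-1$ when $\tau\le1$), so that, decomposing $\phi$ by Littlewood--Paley pieces (the relevant tail converging absolutely in $L^r_x$ by Bernstein's lemma),
\[
(\Id-\Pi_{\tau})\phi=\sum_{\,2^{j+1}>\tau^{-1/2}}(\Id-\Pi_{\tau})\Delta_j\phi .
\]
For each such $j$ (necessarily $j\ge0$) one bounds $\|(\Id-\Pi_{\tau})\Delta_j\phi\|_{L^r_x}\le(1+\|K\|_{L^1_x})\|\Delta_j\phi\|_{L^r_x}$, and since $\widehat{\Delta_j\phi}$ is supported in an annulus of size $2^{j}$, the annulus form of Bernstein's lemma (Lemma $2.1$ in \cite{BCD}) gives $\|\Delta_j\phi\|_{L^r_x}\lesssim 2^{-j\delta}\|D^{\delta}\Delta_j\phi\|_{L^r_x}=2^{-j\delta}\|\Delta_j D^{\delta}\phi\|_{L^r_x}\lesssim 2^{-j\delta}\|D^{\delta}\phi\|_{L^r_x}$, the last step again by Young's inequality on the block. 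Summing the geometric series over $2^{j+1}>\tau^{-1/2}$ yields $\|(\Id-\Pi_{\tau})\phi\|_{L^r_x}\lesssim\tau^{\delta/2}\|D^{\delta}\phi\|_{L^r_x}\le\tau^{\delta/2}\|\phi\|_{W^{\delta,r}_x}$; applying this to $D^{\sigma}\phi$ and to $\phi$ and using $\|D^{\sigma}(\Id-\Pi_\tau)\phi\|_{L^r_x}=\|(\Id-\Pi_\tau)D^{\sigma}\phi\|_{L^r_x}$ gives the announced $W^{\sigma,r}_x$ inequality.

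The only genuinely delicate point is the extraction of the power $\tau^{\delta/2}$ uniformly in $r$, in particular at the endpoint $r=1$. One is tempted to factor $1-\chi(\sqrt{\tau}\xi)=\tau^{\delta/2}\,m(\sqrt{\tau}\xi)$ with $m(\eta)=(1-\chi(\eta))|\eta|^{-\delta}$ and to invoke that $m(\sqrt\tau D)$ is $L^r_x$-bounded uniformly in $\tau$; this works for $1<r<\infty$ since $m$ satisfies the Mihlin--Hörmander condition, but it breaks down at $r=1$ because $1-\chi$ is supported on the unbounded set $\{|\xi|\ge1\}$ and $\mathcal{F}^{-1}m$ need not lie in $L^1$. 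The dyadic splitting above sidesteps this: it uses only the uniform $L^r_x$-boundedness of a single Littlewood--Paley block — a consequence of Young's inequality, valid for every $r\in[1,\infty]$ — together with the annulus form of Bernstein's lemma on each block, the $r$-independence being built in. Everything else, namely the convergence of the Littlewood--Paley series in $L^r_x$, the passage from $\sigma=0$ to general $\sigma$, and the reduction to $\tau\le1$, is routine.
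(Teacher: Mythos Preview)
Your argument is correct and follows precisely the route the paper indicates: the paper does not spell out a proof but states that both bounds are ``a direct consequence of Bernstein's lemma, see for instance Lemma $2.1$ in \cite{BCD}'', and your dyadic decomposition combined with the annulus Bernstein estimate and Young's inequality is exactly the standard way to unpack that reference. One small caveat: the norm equivalence $\|f\|_{W^{\sigma,r}_x}\simeq\|D^{\sigma}f\|_{L^r_x}+\|f\|_{L^r_x}$ you use for the reduction to $\sigma=0$ is recalled in the paper only for $1<r<\infty$, so your careful treatment of the endpoint $r=1$ at the multiplier level is slightly undercut by this reduction step; since all applications in the paper have $r>1$, this is harmless here.
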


We now state a slight generalization of the commutator estimate between $J$ and $\Pi_{\tau}$, originally given in \cite[Lemma 3.3]{Carles2024}.

\begin{lemma}\label{lemma_commutator_J_projector}
Let $1<r< \infty$ and $\sigma,\delta \geq 0$, then 
\[ \| J(t) \Pi_{\tau}\phi- \Pi_{\tau} J(t)\phi \|_{W^{\sigma,r}_x} \lesssim  \tau^{\frac{1+\delta}{2}} \| \phi \|_{W^{\sigma+\delta,r}_x} \]
for all $\phi \in \Sigma \cap W^{\sigma+\delta,r}(\IR^d)$ and all $t\in \IR$.
\end{lemma}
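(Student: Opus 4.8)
The plan is to reduce the commutator estimate to the factorized form of $J(t)$ and the known commutator estimate between $\nabla$ and $\Pi_\tau$. Recall that $J(t) = x + 2it\nabla$, so $J(t)\Pi_\tau\phi - \Pi_\tau J(t)\phi = [x,\Pi_\tau]\phi + 2it[\nabla,\Pi_\tau]\phi$. But $\nabla$ commutes with the Fourier multiplier $\Pi_\tau$, so the second term vanishes identically, and the whole difference reduces to the commutator $[x,\Pi_\tau]\phi$. On the Fourier side, multiplication by $x$ becomes $i\nabla_\xi$, and $\Pi_\tau$ is multiplication by $\chi(\sqrt\tau\xi)$, so
\[
\widehat{[x,\Pi_\tau]\phi}(\xi) = i\nabla_\xi\big(\chi(\sqrt\tau\xi)\widehat\phi(\xi)\big) - \chi(\sqrt\tau\xi)\,i\nabla_\xi\widehat\phi(\xi) = i\sqrt\tau\,(\nabla\chi)(\sqrt\tau\xi)\,\widehat\phi(\xi).
\]
Thus $[x,\Pi_\tau] = \sqrt\tau\,m_\tau(D)$ where $m_\tau(\xi) = i(\nabla\chi)(\sqrt\tau\xi)$ is a Fourier multiplier supported on the annulus $\{1 \le |\sqrt\tau\xi| \le 2\}$, i.e. on frequencies of size $|\xi| \simeq \tau^{-1/2}$.

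From here I would argue exactly as in Bernstein's lemma (as invoked for Lemma \ref{lemma_projection_Fourier}). Since $m_\tau(D)$ is frequency-localized on $|\xi| \simeq \tau^{-1/2}$, for any $\delta \ge 0$ one can write $m_\tau(D) = \tau^{\delta/2}\, n_\tau(D)$ where $n_\tau(\xi) = \tau^{-\delta/2}|\xi|^{-\delta}\cdot|\xi|^\delta m_\tau(\xi)$; on the support of $m_\tau$ we have $\tau^{-\delta/2}|\xi|^{-\delta} \simeq 1$, so $n_\tau(D)$ has a symbol that, together with its derivatives, satisfies Mikhlin–Hörmander bounds uniform in $\tau$, hence $n_\tau(D)\colon L^r_x \to L^r_x$ is bounded uniformly in $\tau$ for $1 < r < \infty$ (this is where $r=1,\infty$ are excluded, matching the hypothesis). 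Combining, $[x,\Pi_\tau]$ maps $D^{-\delta}L^r_x$ into $L^r_x$ with norm $\lesssim \tau^{1/2}\cdot\tau^{\delta/2} = \tau^{(1+\delta)/2}$, i.e.
\[
\|[x,\Pi_\tau]\phi\|_{L^r_x} \lesssim \tau^{\frac{1+\delta}{2}}\|\phi\|_{W^{\delta,r}_x}.
\]
To promote this to the $W^{\sigma,r}_x$ estimate, I note that $D^\sigma$ commutes with $\Pi_\tau$ and with $\nabla$, while $[x,D^\sigma]$ is a Fourier multiplier of order $\sigma - 1$ (bounded $W^{\sigma+\delta,r}_x \to W^{\delta+1,r}_x$-type gains); more cleanly, one writes $D^\sigma[x,\Pi_\tau]\phi = [x,\Pi_\tau]D^\sigma\phi + [\,[D^\sigma,x],\Pi_\tau\,]\phi$ and treats the lower-order commutator term by the same frequency-localization argument, or simply applies the scalar estimate to each component and uses the norm equivalence $\|f\|_{W^{\sigma,r}_x} \simeq \|D^\sigma f\|_{L^r_x} + \|f\|_{L^r_x}$ recalled in the excerpt.

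The main obstacle is bookkeeping rather than conceptual: one must be careful that the multiplier $n_\tau(D)$ genuinely has $\tau$-uniform Mikhlin constants — this works precisely because $\chi$ is a fixed Schwartz cutoff and each $\xi$-derivative of $\chi(\sqrt\tau\xi)$ brings down a factor $\sqrt\tau$ which is exactly compensated by the $|\xi|^{-1} \simeq \sqrt\tau$ gained from the annular localization, so no powers of $\tau$ leak out. The other point requiring a little care is the interaction with $D^\sigma$: since $x$ and $D^\sigma$ do not commute, one either restricts to the localized-multiplier computation throughout (cleanest) or absorbs the extra commutator $[D^\sigma, x]$, which is order $\sigma-1$ and hence harmless given the $\delta$ of regularity to spare. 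I would present the proof by first doing the $\sigma=0$ case fully via the Fourier computation above, then remarking that the general $\sigma$ follows by the same argument applied with $D^\sigma$ incorporated into the symbol, exactly as the generalization of \cite[Lemma 3.3]{Carles2024} requires.
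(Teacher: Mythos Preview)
Your proposal is correct and follows essentially the same route as the paper: compute the commutator on the Fourier side to obtain $\widehat{[J(t),\Pi_\tau]\phi}(\xi)=i\sqrt{\tau}\,(\nabla\chi)(\sqrt{\tau}\xi)\,\widehat{\phi}(\xi)$, observe this multiplier is localized on the annulus $|\xi|\simeq\tau^{-1/2}$ so that one can extract the factor $\tau^{\delta/2}$ against $\langle\xi\rangle^\delta$, and conclude by Fourier multiplier theory. The paper's proof is in fact more terse than yours (it simply cites ``Fourier multiplier theory'' without spelling out the Mikhlin--H\"ormander verification or the passage to general $\sigma$), so your additional bookkeeping is harmless elaboration rather than a genuine difference in approach.
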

\begin{proof}
Following the proof of \cite[Lemma 3.3]{Carles2024}, we directly compute that for all $\xi \in \IR^d$,
\[  \widehat{J(t) \Pi_{\tau}\phi}(\xi)-\widehat{ \Pi_{\tau} J(t)\phi}(\xi) = i\sqrt{\tau} \nabla \chi (\sqrt{\tau} \xi) \widehat{\phi}(\xi) \]
and as $\nabla \chi$ is a smooth cut-off function located on a ring of size $\sqrt{\tau}$, we infer
\[ | \widehat{J(t) \Pi_{\tau}\phi}(\xi)-\widehat{ \Pi_{\tau} J(t)\phi}(\xi) | \lesssim \tau^{\frac{1+\delta}{2}} |\nabla \chi (\sqrt{\tau} \xi) \langle \xi \rangle^{\delta}\widehat{\phi}(\xi)|. \]
The result follows from Fourier multiplier theory \cite{BCD}.
\end{proof}

\medskip

We are now going to prove continuous dispersive estimates associated to the projected linear flow $S_{\tau}$, based on the following lemma from \cite[Theorem 2.1 (i)]{Ignat2011}.

\begin{lemma} \label{lemma_discrete_flow_continuous_Strichartz}
For any $(q,r)$, $(q_1,r_1)$ and $(q_2,r_2)$ admissible pairs and for $\tau>0$, we have
\begin{equation}
\| S_{\tau}(t) \varphi \|_{L^{q}_t L^r_x} \lesssim \|\varphi \|_{L^2_x}
\end{equation}
and
\begin{equation}
\Big\| \int_0^t S_{\tau}(t-s) F(s) \drm s\Big\|_{L^{q_1}_t L^{r_1}_x} \lesssim \| F \|_{L^{q_2'}_t L^{r_2'}_x}
\end{equation}
for all $\varphi \in L^2(\IR^d)$ and $F \in L^{q_2'}(\IR; L^{r_2'}(\IR^d))$. 
\end{lemma}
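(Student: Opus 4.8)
The plan is to deduce both inequalities directly from the corresponding estimates for the unfiltered flow in Lemma~\ref{lemma_KeelTao}, using only the factorization $S_\tau(t)=S(t)\Pi_\tau$, the fact that $\Pi_\tau$ commutes with $S(t)$ (immediate on the Fourier side, since both are Fourier multipliers), and the uniform boundedness of $\Pi_\tau$ on Lebesgue spaces. For the latter point I would first record that $\Pi_\tau$ acts by convolution with $K_\tau(x)=\tau^{-d/2}K(\tau^{-1/2}x)$, where $K$ is the (fixed) inverse Fourier transform of the smooth compactly supported cut-off $\chi$, hence a Schwartz function; therefore $\|K_\tau\|_{L^1_x}=\|K\|_{L^1_x}$ is independent of $\tau$, and Young's inequality gives $\|\Pi_\tau\phi\|_{L^r_x}\lesssim\|\phi\|_{L^r_x}$ for every $1\le r\le\infty$, uniformly in $\tau>0$. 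For $1\le r<\infty$ this is also exactly Lemma~\ref{lemma_projection_Fourier} with $\sigma=\delta=0$, but the convolution argument has the advantage of also covering the endpoint $r=\infty$ (which occurs among admissible pairs when $d=1$) and $r=1$.

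For the homogeneous estimate, I would simply write $\|S_\tau(t)\varphi\|_{L^q_tL^r_x}=\|S(t)(\Pi_\tau\varphi)\|_{L^q_tL^r_x}$, apply the homogeneous Strichartz estimate of Lemma~\ref{lemma_KeelTao} to $\Pi_\tau\varphi\in L^2(\IR^d)$, and then use the $L^2$-boundedness of $\Pi_\tau$, obtaining $\|S_\tau(t)\varphi\|_{L^q_tL^r_x}\lesssim\|\Pi_\tau\varphi\|_{L^2_x}\lesssim\|\varphi\|_{L^2_x}$ with constant independent of $\tau$.

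For the inhomogeneous estimate, I would move the bounded operator $\Pi_\tau$ past $S(\cdot)$ and through the Bochner integral: $\int_0^t S_\tau(t-s)F(s)\drm s=\int_0^t S(t-s)\bigl(\Pi_\tau F(s)\bigr)\drm s$. Applying the inhomogeneous estimate of Lemma~\ref{lemma_KeelTao} with $\Pi_\tau F$ in place of $F$ then yields $\bigl\|\int_0^t S_\tau(t-s)F(s)\drm s\bigr\|_{L^{q_1}_tL^{r_1}_x}\lesssim\|\Pi_\tau F\|_{L^{q_2'}_tL^{r_2'}_x}$, and since $r_2\ge2$ forces $r_2'\in[1,2]$, the uniform boundedness of $\Pi_\tau$ on $L^{r_2'}_x$ gives $\|\Pi_\tau F\|_{L^{q_2'}_tL^{r_2'}_x}\lesssim\|F\|_{L^{q_2'}_tL^{r_2'}_x}$, which concludes. (Alternatively, one may pull $\Pi_\tau$ entirely in front of the Duhamel term and invoke its boundedness on $L^{r_1}_x$; this requires the endpoint $r_1=\infty$ case in $d=1$, again handled by the convolution bound.)

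There is no real obstacle in this lemma: the estimates for $S_\tau$ are strictly weaker than those for $S$, the extra factor $\Pi_\tau$ being a harmless multiplier. The only point deserving a word of care is the uniformity in $\tau$ of the operator norm of $\Pi_\tau$, which is precisely the scaling invariance $\|K_\tau\|_{L^1_x}=\|K\|_{L^1_x}$. The non-endpoint version of the statement already appears in \cite{Ignat2011}; we include the short argument because it is nothing more than the combination of Lemma~\ref{lemma_KeelTao} with the Bernstein-type bounds of Lemma~\ref{lemma_projection_Fourier}.
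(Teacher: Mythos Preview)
Your argument is correct and complete: the reduction $S_\tau(t)=S(t)\Pi_\tau$ together with the uniform $L^r$-boundedness of $\Pi_\tau$ (via Young's inequality with the scale-invariant $L^1$ norm of the convolution kernel) immediately transfers both Strichartz estimates from Lemma~\ref{lemma_KeelTao} to the filtered flow, and you have handled the endpoint exponents carefully.

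The paper takes a different route in that it does not prove the lemma at all, but simply invokes \cite[Theorem~2.1(i)]{Ignat2011}. Your approach has the merit of being entirely self-contained within the paper's existing toolkit (Lemmas~\ref{lemma_KeelTao} and~\ref{lemma_projection_Fourier}), and makes transparent both why the constants are uniform in $\tau$ and why no new dispersive input is needed beyond the classical Keel--Tao estimates. The citation, on the other hand, is more economical and points the reader to a source where related discrete-in-time estimates are also developed, which is relevant for the later Lemmas~\ref{lemma_discrete_Strichartz} and~\ref{lemma_discrete_continuous_Strichartz}.
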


\begin{corollary}\label{corollary_discrete_flow_continuous_Strichartz}
For any $(q,r)$, $(q_1,r_1)$ and $(q_2,r_2)$ admissible pairs, for $\tau>0$ and for $A \in \left\{ \Id,\nabla,J \right\}$, we have
\begin{equation}
\| A S_{\tau}(t) \varphi \|_{L^{q}_t L^r_x} \lesssim \| \varphi \|_{\Sigma}
\end{equation}
and
\begin{equation}
\Big\| A \int_0^t S_{\tau}(t-s) F(s) \drm s\Big\|_{L^{q}_t L^{r}_x} \lesssim \| F \|_{L^{q_2'}_t L^{r_2'}_x} + \| A F \|_{L^{q_2'}_t L^{r_2'}_x}
\end{equation}
for all $\varphi \in \Sigma$ and $F,AF \in L^{q_2'}(\IR; L^{r_2'}(\IR^d))$. 
\end{corollary}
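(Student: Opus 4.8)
The plan is to reduce Corollary \ref{corollary_discrete_flow_continuous_Strichartz} to Lemma \ref{lemma_discrete_flow_continuous_Strichartz} by handling the operator $A$ explicitly in each of the three cases $A\in\{\Id,\nabla,J\}$. The cases $A=\Id$ and $A=\nabla$ are immediate: $\nabla$ commutes with the Fourier multiplier $\Pi_\tau$ and with $S(t)$, so $\nabla S_\tau(t)\varphi = S_\tau(t)\nabla\varphi$ and $\nabla\int_0^t S_\tau(t-s)F(s)\,\mathrm{d}s = \int_0^t S_\tau(t-s)\nabla F(s)\,\mathrm{d}s$; applying Lemma \ref{lemma_discrete_flow_continuous_Strichartz} to $\nabla\varphi$ (which is controlled by $\|\varphi\|_\Sigma$) and to $\nabla F$ gives both bounds, with the homogeneous estimate even being cleaner than claimed since one does not need the $x\varphi$ part. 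So the real content is the case $A=J(t)$.

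For $A=J(t)$, I would use the commutator identity behind Lemma \ref{lemma_commutator_J_projector}. Write $J(t)S_\tau(t)\varphi = J(t)\Pi_\tau S(t)\varphi$. Since $\Pi_\tau$ commutes with $S(t)$, this equals $J(t)S(t)\Pi_\tau\varphi = S(t)\,x\,S(-t)S(t)\Pi_\tau\varphi = S(t)(x\,\Pi_\tau\varphi)$ by \eqref{eq_prop1_J}. Now decompose $x\,\Pi_\tau\varphi = \Pi_\tau(x\varphi) + (x\Pi_\tau\varphi - \Pi_\tau x\varphi)$. The first piece gives $S_\tau(t)(x\varphi)$, to which Lemma \ref{lemma_discrete_flow_continuous_Strichartz} applies with $\|x\varphi\|_{L^2_x}\le\|\varphi\|_\Sigma$. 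The second piece is exactly the commutator $[x,\Pi_\tau]\varphi = [J(t),\Pi_\tau]\varphi$ (the time-dependent $2it\nabla$ part of $J$ commutes with $\Pi_\tau$), which by Lemma \ref{lemma_commutator_J_projector} with $\sigma=\delta=0$, $r=2$ satisfies $\|[x,\Pi_\tau]\varphi\|_{L^2_x}\lesssim\tau^{1/2}\|\varphi\|_{L^2_x}$; then $\|S(t)[x,\Pi_\tau]\varphi\|_{L^q_tL^r_x}\lesssim\tau^{1/2}\|\varphi\|_{L^2_x}\lesssim\|\varphi\|_\Sigma$ by the homogeneous Strichartz estimate (Lemma \ref{lemma_discrete_flow_continuous_Strichartz}), which is what we want. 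For the inhomogeneous bound, apply the same splitting inside the time integral: $J(t)\int_0^t S_\tau(t-s)F(s)\,\mathrm{d}s = \int_0^t S(t-s)\big(x\Pi_\tau F(s)\big)\,\mathrm{d}s$ after moving $J(t)$ through $S(t-s)$ via \eqref{eq_prop1_J} and through $\Pi_\tau$; write $x\Pi_\tau F = \Pi_\tau(xF) + [x,\Pi_\tau]F = \Pi_\tau(JF) - \Pi_\tau(2is\nabla F) + \dots$. Here one must be slightly careful: $J(s)F(s) = xF(s) + 2is\nabla F(s)$, so pushing $J$ onto $F$ under the integral produces both an $\|AF\|$ term and, via the $2is\nabla$ part that does not commute cleanly, additional gradient terms — but these are absorbed into the $\nabla$ case already treated, and the commutator remainder is again $O(\tau^{1/2})$ in the appropriate dual norm and handled by the inhomogeneous estimate of Lemma \ref{lemma_discrete_flow_continuous_Strichartz}.

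The step I expect to be the main obstacle is bookkeeping the $A=J$ inhomogeneous estimate: unlike the homogeneous case, one cannot simply pull $J(t)$ all the way inside, because the integration variable $s$ in $J(s)$ differs from the outer $t$, and $J(t)S(t-s) = S(t-s)J(s)$ is the correct identity — so the term that appears under the integral is $J(s)F(s)$, which is why the right-hand side naturally contains $\|AF\|_{L^{q_2'}_tL^{r_2'}_x}$ rather than something involving $xF$ alone. One must check that the extra $\nabla$-terms generated (from the $2is\nabla$ part of $J$, and from the commutator $[J(s),\Pi_\tau]$ which by Lemma \ref{lemma_commutator_J_projector} costs only $\tau^{(1+\delta)/2}$ and hence is harmless) all fall under bounds already established, using that $\tau^{1/2}$ or $\tau^{(1+\delta)/2}$ factors make the commutator contributions subdominant and that $\nabla F$ is dominated by the data appearing in the statement. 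Once this accounting is done, the estimate follows by combining Lemmas \ref{lemma_discrete_flow_continuous_Strichartz}, \ref{lemma_commutator_J_projector} and the product/Leibniz rules from the start of the section; none of the remaining pieces requires new ideas.
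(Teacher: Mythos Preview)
Your overall strategy matches the paper's: reduce to Lemma~\ref{lemma_discrete_flow_continuous_Strichartz}, and for $A=J$ split off the commutator $[J,\Pi_\tau]$ and control it with Lemma~\ref{lemma_commutator_J_projector}. The homogeneous estimate is carried out correctly and coincides with the paper's argument (minor point: the commutator piece carries $S(t)$, not $S_\tau(t)$, so the citation should be Lemma~\ref{lemma_KeelTao} rather than Lemma~\ref{lemma_discrete_flow_continuous_Strichartz}).

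The inhomogeneous case, however, is genuinely muddled. Your displayed identity
\[
J(t)\int_0^t S_\tau(t-s)F(s)\,\drm s \;=\; \int_0^t S(t-s)\big(x\,\Pi_\tau F(s)\big)\,\drm s
\]
is false: passing $J(t)$ through $S(t-s)$ yields $J(s)$, not $x$, exactly as you observe one paragraph later. With the correct identity
\[
J(t)S_\tau(t-s)=S(t-s)J(s)\Pi_\tau=S_\tau(t-s)J(s)+S(t-s)[J(s),\Pi_\tau],
\]
there are \emph{no} stray $2is\nabla F$ terms at all: the full operator $J(s)$ hits $F(s)$, and $J(s)F(s)$ is precisely $AF(s)$, already present on the right-hand side of the statement. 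Your attempt to ``absorb $s\nabla F$ into the $\nabla$-case'' or to claim that ``$\nabla F$ is dominated by the data'' would fail if it were actually required, since neither $\|\nabla F\|_{L^{q_2'}_tL^{r_2'}_x}$ nor $\|s\nabla F\|_{L^{q_2'}_tL^{r_2'}_x}$ is controlled by $\|F\|+\|JF\|$ in general; fortunately these terms are phantoms. The clean argument is then one line, as in the paper: apply the inhomogeneous estimate of Lemma~\ref{lemma_discrete_flow_continuous_Strichartz} to $\int_0^t S_\tau(t-s)J(s)F(s)\,\drm s$ and that of Lemma~\ref{lemma_KeelTao} to $\int_0^t S(t-s)[J(s),\Pi_\tau]F(s)\,\drm s$, the latter contributing $\sqrt\tau\,\|F\|_{L^{q_2'}_tL^{r_2'}_x}$ by Lemma~\ref{lemma_commutator_J_projector} with $\sigma=\delta=0$.
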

\begin{proof}
The case $A=\Id$ corresponds to Lemma \ref{lemma_discrete_flow_continuous_Strichartz}, and the case $A=\nabla$ is straightforward as $\nabla$ and $S_{\tau}$ commute. We are then left with the case $A=J$, which does not commute with $S_{\tau}$. We simply write the commutator identity
\[ J(t)S_{\tau}(t-s)\varphi= S(t-s) J(s) \Pi_{\tau} \varphi= S(t-s) \left[J(s),\Pi_{\tau} \right]\varphi + S_{\tau}(t-s)J(s)\varphi \]
with the standard notation $\left[X,Y\right]=XY-YX$, hence we get the homogeneous Strichartz estimate (taking $s=0$)
\begin{align*}
\| J(t) S_{\tau}(t)\varphi \|_{L^q_t L^r_x} & \lesssim   + \| S_{\tau}(t) \left[x,\Pi_{\tau} \right] \varphi \|_{L^q_t L^r_x} + \| S_{\tau}(t) x \varphi \|_{L^q_t L^r_x} \\
& \lesssim  \| \left[x,\Pi_{\tau} \right] \varphi \|_{ L^2_x} + \|  x \varphi \|_{L^2_x} \\
& \lesssim \sqrt{\tau} \| \varphi \|_{L^2_x} + \|  x \varphi \|_{L^2_x}
\end{align*}
where we have used a combination of Lemma \ref{lemma_commutator_J_projector} with $\sigma=\delta=0$ alongside Lemma \ref{lemma_KeelTao} and Lemma~\ref{lemma_discrete_flow_continuous_Strichartz}, which gives the result as $\tau \leq 1$. The inhomogeneous Strichartz estimate is established similarly.
\end{proof}

In order to prove the convergence of our numerical schemes, we will compare them with the projected iterates $(U_n^{\tau})_{0\le n\le N-1}$ defined recursively for all $t \in \IR$ as $U_0^{\tau}(t)=S_\tau(t)\varphi$ and
\begin{equation}
U_n^{\tau}(t) = -i\int_0^tS_\tau(t-s)F_n^{\tau}(s)\drm s
\end{equation}
for $n\ge1$, where 
\[F_n^{\tau}(t) = \sum_{n_1+\ldots+n_p=n-1} U_{n_1}^{\tau}(t) \overline{U_{n_2}^{\tau}(t)} \ldots U_{n_p}^{\tau}(t).   \]
We first prove some dispersive bounds for this truncated family.

\begin{proposition} \label{prop_unif_space_time_bounds_Un^tau}
For $(q,r)$ an admissible pair, $A \in \left\{ \Id, \nabla\right\}$ and any $\sigma\ge0$, we have 
\begin{equation} 
\|A U_n^{\tau}(t)\|_{L^q_t W_x^{\sigma,r}} \lesssim \| A \varphi \|_{H^\sigma_x}^{(p-1)n+1}
\end{equation}
and 
\begin{equation} 
\|J(t)U_n^{\tau}(t)\|_{L^q_t W_x^{\sigma,r}} \lesssim \| x \varphi \|_{H^\sigma_x}^{(p-1)n+1}
\end{equation}
for any $n\ge0$.
\end{proposition}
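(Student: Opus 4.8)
The plan is to argue by induction on $n\ge0$, mirroring the proof of Proposition \ref{prop_unif_space_time_bounds_Un} with the continuous flow $S$ replaced by the projected flow $S_\tau=S(t)\Pi_\tau$. The only genuinely new feature is that $J(t)$ does not commute with $S_\tau(t)$, so whenever $A=J$ occurs we will use the splitting $J(t)\Pi_\tau=\Pi_\tau J(t)+[J(t),\Pi_\tau]$ and estimate the commutator by Lemma \ref{lemma_commutator_J_projector}. Throughout we use the norm equivalence $\|f\|_{W^{\sigma,r}_x}\simeq\|D^\sigma f\|_{L^r_x}+\|f\|_{L^r_x}$, the fractional Leibniz rule \eqref{eq_fractional_Leibniz_rule}, Hölder in time, the weighted Sobolev inequality \eqref{eq_weighted_Sobolev}, and the admissible pairs $(q,r)=\big(\tfrac{4(p+1)}{d(p-1)},p+1\big)$ and $(\gamma,\rho)$ chosen exactly as in the cited proof, all of which rely on the standing assumption $1+\tfrac4d\le p<1+\tfrac4{(d-2)_+}$.

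For $n=0$ we have $U_0^\tau(t)=S_\tau(t)\varphi$. For $A\in\{\Id,\nabla\}$ the operator $D^\sigma A$ is a Fourier multiplier, hence commutes with $S_\tau(t)$, and the homogeneous estimate of Lemma \ref{lemma_discrete_flow_continuous_Strichartz} gives $\|D^\sigma A S_\tau(t)\varphi\|_{L^q_tL^r_x}\lesssim\|D^\sigma A\varphi\|_{L^2_x}$, and likewise without $D^\sigma$, which yields the claim. For $A=J$ we write, using \eqref{eq_prop1_J}, $J(t)S_\tau(t)\varphi=S_\tau(t)(x\varphi)+S(t)[x,\Pi_\tau]\varphi$; the first term is bounded as above by Lemma \ref{lemma_discrete_flow_continuous_Strichartz}, and the second by Lemma \ref{lemma_KeelTao} together with Lemma \ref{lemma_commutator_J_projector} for $\delta=0$, which contributes only a factor $\tau^{1/2}\le1$ (equivalently, this base case is Corollary \ref{corollary_discrete_flow_continuous_Strichartz} upgraded to $W^{\sigma,r}_x$).

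For the inductive step $n\ge1$ we assume the bounds for all indices strictly less than $n$ and use the Duhamel formula $U_n^\tau(t)=-i\int_0^tS_\tau(t-s)F_n^\tau(s)\,\drm s$. For $A\in\{\Id,\nabla\}$, since $D^\sigma A$ commutes with $S_\tau$, the inhomogeneous estimate of Lemma \ref{lemma_discrete_flow_continuous_Strichartz} reduces the bound to $\|D^\sigma AF_n^\tau\|_{L^{q_2'}_tL^{r_2'}_x}$; one then expands this by the fractional Leibniz rule, splits the time integrability with Hölder as in Proposition \ref{prop_unif_space_time_bounds_Un}, and bounds the factors not carrying the derivative by the weighted Sobolev inequality \eqref{eq_weighted_Sobolev}, which needs only the $L^\infty_tL^2_x$-bounds on $U_{n_K}^\tau$ and $J(t)U_{n_K}^\tau$ supplied by the induction hypothesis. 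For $A=J$ we commute $J$ inside the integral via \eqref{eq_prop1_J} and split:
\[
J(t)U_n^\tau(t)=-i\int_0^tS_\tau(t-s)J(s)F_n^\tau(s)\,\drm s-i\int_0^tS(t-s)[J(s),\Pi_\tau]F_n^\tau(s)\,\drm s.
\]
Since $J(s)$ acts as a derivation on a nonlinearity of the form $G(|z|^2)z$, the term $J(s)F_n^\tau$ is a finite sum of products of the $U_{n_k}^\tau$ in which one factor is replaced by $J(s)U_{n_k}^\tau$, so the first integral is treated exactly as in the case $A\in\{\Id,\nabla\}$, all factors being controlled by induction. The second integral is estimated by the inhomogeneous Strichartz estimate of Lemma \ref{lemma_KeelTao} and then by Lemma \ref{lemma_commutator_J_projector} with $\delta=0$, giving $\lesssim\tau^{1/2}\|F_n^\tau\|_{L^{q_2'}_tW^{\sigma,r_2'}_x}$, and this last norm is controlled by the same Leibniz–Hölder–weighted-Sobolev argument while $\tau^{1/2}\le1$ makes it harmless.

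I expect no serious obstacle: the argument is essentially the bookkeeping of Proposition \ref{prop_unif_space_time_bounds_Un}, the one addition being the commutators $[J(t),\Pi_\tau]$, which Lemma \ref{lemma_commutator_J_projector} shows produce only lower-order $O(\tau^{1/2})$ contributions absorbed using $\tau\le1$. The single delicate point, inherited from the continuous case, is to choose in the weighted Sobolev step exponents for which $\gamma\delta>1$, which is exactly where the assumption $p\ge1+\tfrac4d$ is used.
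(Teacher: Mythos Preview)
Your proposal is correct and follows essentially the same route as the paper: induction on $n$, reduction to Proposition \ref{prop_unif_space_time_bounds_Un} via the Strichartz estimates of Lemma \ref{lemma_discrete_flow_continuous_Strichartz}, and handling the lack of commutation between $J(t)$ and $S_\tau$ by the splitting $J(s)\Pi_\tau=\Pi_\tau J(s)+[J(s),\Pi_\tau]$ together with Lemma \ref{lemma_commutator_J_projector}. The decomposition of $J(t)U_n^\tau$ into the two integrals you write is exactly the one the paper uses, and your treatment of each piece matches the paper's.
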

\begin{proof}
Once again, we perform by induction. The initialization simply corresponds to Corollary~\ref{corollary_discrete_flow_continuous_Strichartz}. The proof of the induction argument follows closely the one of Proposition~\ref{prop_unif_space_time_bounds_Un}, noting in particular that the differentiation operator $D^{\sigma}$ commutes with the projected linear flow $S_{\tau}$. The only new element is the appearance of the commutators between the operator $J$ and the projector $\Pi_{\tau}$, leading to the identity
 \begin{align*}
  J(t)U_n^{\tau}(t) & =-i\int_0^t S(t-s)J(s)\Pi_{\tau} F_n^{\tau}(s) \drm s  \\
  & = -i \int_0^t S_{\tau}(t-s) J(s) F_n^{\tau}(s) \drm s -i \int_0^t S(t-s) \left[ J(s),\Pi_{\tau} \right] F_n{\tau}(s) \drm s   
  \end{align*}
as in the proof of Corollary \ref{corollary_discrete_flow_continuous_Strichartz}. While the first term is treated inductively as in Proposition~\ref{prop_unif_space_time_bounds_Un}, using the Strichartz estimates from Corollary~\ref{corollary_discrete_flow_continuous_Strichartz}, the second term also use the commutator bound in Lemma~\ref{lemma_commutator_J_projector}, followed by an application of Strichartz estimates from Lemma~\ref{lemma_KeelTao} combined with the induction hypothesis.
\end{proof}

The following proposition ensures that the truncated sequence $(U_n^{\tau})_{0\le n\le N-1}$ provides an accurate enough approximation of the continuous sequence $(U_n)_{0\le n\le N-1}$. Although our final convergence results are established in the $L_t^\infty L_x^2$ space, the inductive proofs of our main theorems will require to work in semi-discrete norms $\ell_\tau^q L_x^r$, for which the linear flow $S(t)$ no longer satisfies dispersive estimates, which motivates the use of the truncated iterates $U_n^{\tau}$.

\medskip 

\begin{proposition}\label{prop:Unstarbound}
For any admissible pair $(q,r)$ and for $A \in \left\{ \Id,\nabla,J \right\}$, we have
\begin{equation}
\|A(U_n(t)-U_n^{\tau}(t))\|_{L_t^qL_x^r}\lesssim\tau^{N}\|\varphi\|_{\Sigma \cap H^{2N}_x }
\end{equation}
for any $n\ge0$.
\end{proposition}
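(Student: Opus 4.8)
The plan is to argue by induction on $n \ge 0$, exploiting the recursive structure that defines both $U_n$ and $U_n^\tau$. The key observation is that the difference $U_n - U_n^\tau$ decomposes into two types of error: the error coming from replacing the free flow $S(t)$ by the filtered flow $S_\tau(t)$ at the outermost Duhamel integration, and the error propagated from the lower-order terms $U_{n'} - U_{n'}^\tau$ with $n' < n$ that appear inside $F_n$ versus $F_n^\tau$. For the base case $n = 0$, we have $U_0(t) - U_0^\tau(t) = S(t)(\varphi - \Pi_\tau \varphi)$, and for $A \in \{\Id, \nabla\}$ the bound follows directly from the homogeneous Strichartz estimate (Lemma~\ref{lemma_KeelTao}) combined with the Bernstein-type bound $\|\Pi_\tau \varphi - \varphi\|_{H^\sigma_x} \lesssim \tau^N \|\varphi\|_{H^{\sigma+2N}_x}$ from Lemma~\ref{lemma_projection_Fourier} (taking $\delta = 2N$). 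For $A = J$, we use $J(t)S(t)(\varphi - \Pi_\tau\varphi) = S(t)\,x(\varphi - \Pi_\tau\varphi)$ together with the commutator estimate of Lemma~\ref{lemma_commutator_J_projector} to replace $x\,\Pi_\tau\varphi$ by $\Pi_\tau(x\varphi)$ up to an error of size $\tau^{(1+\delta)/2}$, so again $\tau^N$ regularity is available provided $\varphi \in \Sigma \cap H^{2N}$.

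For the inductive step, fix $n \ge 1$ and write, using property \eqref{eq_prop1_J} to commute $J$ through the flow,
\begin{align}
A\big(U_n(t) - U_n^\tau(t)\big) &= -i\int_0^t \big(S(t-s) - S_\tau(t-s)\big) A' F_n(s)\,\drm s \\
&\quad -i\int_0^t S_\tau(t-s)\, A'\big(F_n(s) - F_n^\tau(s)\big)\,\drm s \\
&\quad -i\int_0^t S(t-s)\,[J(s),\Pi_\tau]\,F_n^\tau(s)\,\drm s,
\end{align}
where $A'$ stands for $A$ or, in the case $A = J$, for multiplication by $x$, and where the last term only appears when $A = J$. The first term is handled by writing $S - S_\tau = S(\Id - \Pi_\tau)$, applying inhomogeneous Strichartz (Lemma~\ref{lemma_KeelTao}), then Lemma~\ref{lemma_projection_Fourier} to gain $\tau^N$ at the cost of $2N$ extra derivatives, and finally controlling $\|A' F_n\|$ in a suitable dual Strichartz norm by the fractional Leibniz rule \eqref{eq_fractional_Leibniz_rule}, Hölder in time, the weighted Sobolev inequality \eqref{eq_weighted_Sobolev}, and the uniform bounds of Proposition~\ref{prop_unif_space_time_bounds_Un} — exactly as in the proof of Proposition~\ref{prop_unif_space_time_bounds_Un}, the only change being the extra derivatives, which is why the hypothesis is $\varphi \in \Sigma \cap H^{2N}$. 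The third term (present only for $A = J$) is bounded using Lemma~\ref{lemma_commutator_J_projector} with $\delta = 2N-1$, which supplies the factor $\tau^N$, together with Proposition~\ref{prop_unif_space_time_bounds_Un^tau} to control $F_n^\tau$ in the relevant norm.

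The heart of the argument is the second term, which carries the induction. Expanding $F_n - F_n^\tau = \sum_{n_1+\dots+n_p = n-1} \big(U_{n_1}\cdots U_{n_p} - U_{n_1}^\tau\cdots U_{n_p}^\tau\big)$ via a telescoping sum, each summand is a product in which exactly one factor is a difference $U_{n_\ell} - U_{n_\ell}^\tau$ (with $n_\ell \le n-1$, so the inductive hypothesis applies and gives $\tau^N$) and the remaining $p-1$ factors are either $U_{n_K}$ or $U_{n_K}^\tau$ (bounded by Proposition~\ref{prop_unif_space_time_bounds_Un} or~\ref{prop_unif_space_time_bounds_Un^tau}). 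When $A = J$ one must also use the Leibniz-type action of $J(t)$ on products, namely $J(t)(fg) = (J(t)f)g + f(J(t)g)$ modulo the usual conjugation bookkeeping, so that $J$ falls on exactly one factor; if it lands on the difference factor we use the inductive hypothesis for $A = J$, otherwise we use it for $A = \Id$ on the difference and for $A = J$ on one of the good factors. In every case one applies Corollary~\ref{corollary_discrete_flow_continuous_Strichartz} to the outer $S_\tau$ integral, then distributes Strichartz exponents exactly as in Proposition~\ref{prop_unif_space_time_bounds_Un}, using the weighted Sobolev inequality \eqref{eq_weighted_Sobolev} to convert $L^r_x$ norms of the non-difference factors into $L^2_x$ norms of themselves and of $J(\cdot)$ applied to them, which are time-integrable with the chosen exponent $\gamma$. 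The main obstacle is purely bookkeeping: organizing the telescoping expansion and the distribution of the operator $J$ (and of the derivatives $D^\sigma$ when $A = \nabla$, to recover a $W^{\sigma,r}_x$ bound) so that the inductive hypothesis is invoked with the correct operator on the correct factor, while keeping all the Strichartz pairs admissible — no genuinely new estimate is needed beyond what has already been assembled in Propositions~\ref{prop_unif_space_time_bounds_Un} and~\ref{prop_unif_space_time_bounds_Un^tau}, Corollary~\ref{corollary_discrete_flow_continuous_Strichartz}, and Lemmas~\ref{lemma_projection_Fourier} and~\ref{lemma_commutator_J_projector}.
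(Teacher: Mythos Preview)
Your proposal is correct and follows essentially the same route as the paper: induction on $n$, with the difference $U_n - U_n^\tau$ split into a filtration error $(S-S_\tau)$ on $F_n$ (handled by Lemma~\ref{lemma_projection_Fourier} plus Proposition~\ref{prop_unif_space_time_bounds_Un}), a commutator term $[J,\Pi_\tau]$ (handled by Lemma~\ref{lemma_commutator_J_projector} plus Proposition~\ref{prop_unif_space_time_bounds_Un^tau}), and a propagated error on $F_n - F_n^\tau$ (handled by telescoping, the Leibniz-type action of $J$, and the inductive hypothesis, with the non-difference factors controlled via the weighted Sobolev inequality~\eqref{eq_weighted_Sobolev} exactly as in Proposition~\ref{prop_unif_space_time_bounds_Un}). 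One small slip: in your displayed decomposition, $A'$ in the case $A = J$ should be $J(s)$ (the time-$s$ version obtained from property~\eqref{eq_prop1_J}), not multiplication by $x$; you clearly know this since you cite~\eqref{eq_prop1_J}, so this is purely notational and does not affect the argument.
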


\medskip 

\begin{proof}
We prove the result by induction on $n\ge0$. We first have for $A \in \left\{ \Id,\nabla \right\}$,
\begin{equation}
\|A(U_0(t)-U_0^{\tau}(t))\|_{L_t^qL_x^r}=\|S(t)(\Id-\Pi_\tau)A\varphi\|_{L_t^qL_x^r}\lesssim\|(\text{Id}-\Pi_\tau)A\varphi\|_{L^2_x}\lesssim \tau^N\|A\varphi\|_{H^{2N}_x}
\end{equation}
using successively Lemmas \ref{lemma_KeelTao} and \ref{lemma_projection_Fourier}. If $A=J(t)$, we rather write
\begin{align*}
\|A(U_0(t)-U_0^{\tau}(t))\|_{L_t^qL_x^r} & =\|S(t)x(\Id-\Pi_\tau)\varphi\|_{L_t^qL_x^r}\\
& \lesssim\|x(\Id-\Pi_\tau)\varphi\|_{L^2_x} \\
& \lesssim \| (\Id - \Pi_{\tau}) x \varphi \|_{L^2_x} + \| \left[ x,\Pi_\tau \right] \varphi \|_{L^2_x} \\
& \lesssim \| x \varphi \|_{H^{2N}_x} + \| \left[ x,\Pi_\tau \right] \varphi \|_{L^2_x} \\
& \lesssim \tau^N( \|x \varphi\|_{H^{2N}_x} + \sqrt{\tau}\|\varphi\|_{H^{2N}_x}).
\end{align*}
where we have used Lemmas \ref{lemma_KeelTao}, \ref{lemma_projection_Fourier} and \ref{lemma_commutator_J_projector}. For $n\ge1$, we focus on the case $A=J$, as the other cases $A \in \left\{\Id,\nabla \right\}$ follow similarly and are in fact simpler. We write thanks to commutators as in the proof of Proposition \ref{prop_unif_space_time_bounds_Un^tau} that
\begin{align}
\|J(t)(U_n(t)-U_n^{\tau}(t))\|_{L_t^q L_x^r}&=\Big\|J(t) \int_0^t S(t-s)F_n(s)\drm s- J(t) \int_0^tS_\tau(t-s)F_n^{\tau}(s)\drm s\Big\|_{L_t^qL_x^r}\\ 
& \leq \Big\| \int_0^t S(t-s)(\Id-\Pi_{\tau}) J(s)F_n(s) \drm s \Big\|_{L_t^qL_x^r} \\
& \quad + \Big\| \int_0^t S_{\tau}(t-s) \left[ J(s),\Pi_{\tau} \right] F_n(s) \drm s  \Big\|_{L_t^qL_x^r} \\
& \quad +  \Big\| \int_0^t S_{\tau}(t-s) \left[ J(s),\Pi_{\tau} \right] F_n^{\tau}(s) \drm s  \Big\|_{L_t^qL_x^r} \\
& \quad + \Big\| \int_0^t S_{\tau}(t-s) J(s) (F_n(s)-F_n^{\tau}(s)) \drm s  \Big\|_{L_t^qL_x^r} \\
& \eqqcolon \mathcal{I}_1 + \mathcal{I}_2 + \mathcal{I}_3 + \mathcal{I}_4.
\end{align}
We estimate each term separately. First for $\mathcal{I}_1$, we successively use inhomogeneous Strichartz estimates from Lemma \ref{lemma_KeelTao} with $(q_2,r_2)$ and admissible pair, Bernstein inequality from Lemma \ref{lemma_projection_Fourier}, so that
\[ \mathcal{I}_1 \lesssim  \| (\Id-\Pi_{\tau}) J(t) F_n \|_{L^{q_2'}_t L^{r_2'}_x} \lesssim \tau^N \|J(t) F_n \|_{L^{q_2'}_t W^{2N,r_2'}_x} \]
and we conclude using fractional Leibniz rule alongside Proposition \ref{prop_unif_space_time_bounds_Un} in the same exact way as in the proof of Proposition \ref{prop_unif_space_time_bounds_Un}. The terms $\mathcal{I}_2$ and $\mathcal{I}_3$ are bounded similarly, using first inhomogeneous Strichartz estimates from \ref{lemma_discrete_flow_continuous_Strichartz}, then the commutator estimate between $J$ and $\Pi_{\tau}$ Lemma \ref{lemma_commutator_J_projector}, so for instance
\[  \mathcal{I}_2 \lesssim \| \left[ J,\Pi_{\tau} \right] F_n \|_{L^{q_2'}_t L^{r_2'}_x} \lesssim \tau^N \| F_n \|_{L^{q_2'}_t W^{2N,r_2'}_x} \]
and we again conclude by fractional Leibniz rule and Proposition \ref{prop_unif_space_time_bounds_Un} (for $\mathcal{I}_2$) or Proposition \ref{prop_unif_space_time_bounds_Un^tau} (for $\mathcal{I}_3$). Finally we treat $\mathcal{I}_4$, namely
\begin{multline*}
\mathcal{I}_4  \lesssim  \| J (U_n-U_n^{\tau}) \|_{L^{q_2'}_t L_x^{r_2'}} \\
  \lesssim \sum_{n_1+\ldots+n_p=n-1} \sum_{\ell=1}^p \left( \prod_{K=1}^{\ell-1} \| U_{n_K}\|_{L^{\gamma}_t L^r_x} \right) \|  J (U_{n_{\ell}}-U_{n_{\ell}}^{\tau}) \|_{L^{q}_t L^{r}_x} \left( \prod_{K'=\ell+1}^p \| U_{n_{K'}}^{\tau}\|_{L^{\gamma}_t L^r_x} \right)  \\
+\sum_{n_1+\ldots+n_p=n-1} \sum_{\substack{\ell,\ell'=1\\ \ell'<\ell  }}^p \| J U_{n_{\ell'}} \|_{L^{\gamma}_t L^r_x} \left( \prod_{\substack{K=1\\K\neq \ell'}}^{\ell-1} \| U_{n_K}\|_{L^{\gamma}_t L^r_x} \right) \|U_{n_{\ell}}-U_{n_{\ell}}^{\tau} \|_{L^{q}_t L^{r}_x} \left( \prod_{K'=\ell+1}^p \| U_{n_{K'}}^{\tau}\|_{L^{\gamma}_t L^r_x} \right) \\
+\sum_{n_1+\ldots+n_p=n-1} \sum_{\substack{\ell,\ell'=1\\ \ell'>\ell  }}^p \| J U_{n_{\ell'}}^{\tau} \|_{L^{\gamma}_t L^r_x} \left( \prod_{K=1}^{\ell-1} \| U_{n_K}\|_{L^{\gamma}_t L^r_x} \right) \|U_{n_{\ell}}-U_{n_{\ell}}^{\tau} \|_{L^{q}_t L^{r}_x} \left( \prod_{\substack{K'=\ell+1\\K'\neq \ell'}}^p \| U_{n_{K'}}^{\tau}\|_{L^{\gamma}_t L^r_x} \right) 
\end{multline*}
where the admissible pairs $(q,r)$ and $(\gamma,\rho)$ are taken as in the proof of Proposition \ref{prop_unif_space_time_bounds_Un}. We then apply the induction hypothesis, the weighted Sobolev inequality~\eqref{eq_weighted_Sobolev}, and the uniform bounds from Lemmas~\ref{prop_unif_space_time_bounds_Un} and~\ref{prop_unif_space_time_bounds_Un^tau}. The only new contributions are the terms $\| J U_{n_{\ell'}} \|_{L^{\gamma}_t L^r_x}$ and $\| J U_{n_{\ell'}}^{\tau} \|_{L^{\gamma}_t L^r_x}$, which are estimated once again via the Sobolev embedding  $W^{\delta,\rho}_x \hookrightarrow L^r_x$ with $\delta$ defined as in the proof of Proposition \ref{prop_unif_space_time_bounds_Un}. We conclude by using the uniform estimates from respectively Proposition~\ref{prop_unif_space_time_bounds_Un} and Proposition \ref{prop_unif_space_time_bounds_Un^tau}, which ends the proof.
\end{proof}

\subsection{Discrete dispersive estimates}

We now introduce discrete-in-time Lebesgue spaces
\begin{equation}
\|f\|_{\ell_{\tau}^q L_x^r}=\Big(\tau \sum_{j \in \IZ} \big(\int_{\IR^d}|f(j \tau,x)|^r\drm x \big)^{\frac{q}{r}} \Big)^{\frac{1}{q}}
\end{equation}
for a given time step $\tau >0$. The following discrete Strichartz estimates, serve as discrete analogs of the continuous results from Lemma \ref{lemma_KeelTao}, and have been recently stated in \cite[Corollary 3.4]{Carles2024}.

\medskip

\begin{lemma}\label{lemma_discrete_Strichartz}
Let $(q,r)$, $(q_1,r_1$ and $(q_2,r_2)$ be admissible pairs, let $\tau>0$ and $A \in \left\{ \Id,\nabla \right\}$. Then we have for all $0 \leq j \leq J$ that
\begin{equation}
\| A S_{\tau}(t_j) \varphi \|_{\ell^{q}_{\tau} L^{r}_x} \lesssim \| \varphi \|_{L^2_x}
\end{equation}
and
\begin{equation}
\Big\| \tau A \sum_{k=0}^{j-1} S_{\tau}\big(t_j-t_k\big) F(t_k)\Big\|_{\ell^{q_1}_{\tau} L^{r_1}_x} \lesssim  \| A F \|_{\ell^{q_2'}_{\tau} L^{r_2'}_x}
\end{equation}
as well as
\begin{equation}
\| J(t_j) S_{\tau}(t_j) \varphi \|_{\ell^{q}_{\tau} L^{r}_x} \lesssim \| x\varphi \|_{L^2_x}
\end{equation}
and
\begin{equation}
\Big\| \tau J(t_j) \sum_{k=0}^{j-1} S_{\tau}\big(t_j-t_k\big) F(t_k)\Big\|_{\ell^{q_1}_{\tau} L^{r_1}_x} \lesssim  \sqrt{\tau} \| F \|_{\ell^{q_2'}_{\tau} L^{r_2'}_x} + \| J(t_j) F \|_{\ell^{q_2'}_{\tau} L^{r_2'}_x}
\end{equation}
for all $\varphi \in \Sigma$ and $F \in \ell^{q_2'}(\tau \IZ; L^{r_2'}(\IR^d))$. 
\end{lemma}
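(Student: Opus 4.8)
The plan is to reduce everything to the case $A=\Id$, which is precisely the content of \cite[Corollary 3.4]{Carles2024}, and then propagate it to $\nabla$ and $J(t_j)$ by commutation. I would first recall why the $A=\Id$ estimates hold: although the purely time-discrete flow $S(t_j)=S(j\tau)$ possesses no dispersion, the truncation $\Pi_\tau$ restores it at the right scale. Concretely $S_\tau(t)$ obeys the dispersive bound $\|S_\tau(t)\phi\|_{L^\infty_x}\lesssim |t|^{-d/2}\|\phi\|_{L^1_x}$ for $t\neq0$ together with $\|S_\tau(t)\phi\|_{L^2_x}\lesssim\|\phi\|_{L^2_x}$, and since $\widehat{\Pi_\tau\phi}$ lives in $\{|\xi|\lesssim\tau^{-1/2}\}$, the map $t\mapsto\|S(t)\Pi_\tau\phi\|_{L^r_x}$ varies slowly on intervals of length $\tau$ by Bernstein; feeding this into the semi-discrete $TT^*$ machinery of \cite{Ignat2011} yields the homogeneous and inhomogeneous discrete Strichartz bounds for $S_\tau$, the passage from a sum over $\IZ$ to the retarded sum $0\le k\le j-1$ being handled by a discrete Christ–Kiselev argument exactly as in the Remark following Lemma~\ref{lemma_KeelTao}. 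The case $A=\nabla$ is then immediate, since $\nabla$ commutes with $S_\tau$: one applies the $A=\Id$ bounds to $\nabla\varphi$ and to the forcing $\nabla F(t_k)$.

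For $A=J(t_j)$ I would mimic, at the discrete level, the commutator trick of Corollary~\ref{corollary_discrete_flow_continuous_Strichartz}. Starting from $J(t)=S(t)xS(-t)$ one gets the exact pseudo-commutation $J(t_j)S(t_j-t_k)=S(t_j-t_k)J(t_k)$, hence
\[ J(t_j)\,S_\tau(t_j-t_k)\psi \;=\; S_\tau(t_j-t_k)\,J(t_k)\psi \;+\; S(t_j-t_k)\,[x,\Pi_\tau]\psi, \]
using $[J(t_k),\Pi_\tau]=[x,\Pi_\tau]$ because $t_k\nabla$ commutes with $\Pi_\tau$. Taking $\psi=\varphi$, $t_k=0$ (resp. summing this against $\psi=F(t_k)$) splits the homogeneous (resp. inhomogeneous) $J$-estimate into two pieces. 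The first, $S_\tau(t_j)(x\varphi)$ (resp. $\tau\sum_k S_\tau(t_j-t_k)J(t_k)F(t_k)$), is controlled by the already established $A=\Id$ bound applied to $x\varphi\in L^2$ (resp. to the forcing $J(t_k)F(t_k)$), producing the terms $\|x\varphi\|_{L^2_x}$ and $\|J(t_j)F\|_{\ell^{q_2'}_\tau L^{r_2'}_x}$ of the statement. For the second, commutator, piece one uses the exact identity $\widehat{[x,\Pi_\tau]\psi}(\xi)=i\sqrt\tau\,(\nabla\chi)(\sqrt\tau\xi)\,\widehat\psi(\xi)$: this is Fourier-supported in $\{|\xi|\le 2\tau^{-1/2}\}$, so $[x,\Pi_\tau]\psi=\widetilde\Pi_\tau[x,\Pi_\tau]\psi$ for a dilated cutoff $\widetilde\chi\equiv1$ on $B^d(0,2)$, whence $S(t_j-t_k)[x,\Pi_\tau]\psi=\widetilde S_\tau(t_j-t_k)[x,\Pi_\tau]\psi$ and the $A=\Id$ discrete Strichartz estimates (with constant depending only on $\widetilde\chi$) apply; combined with the pointwise gain $\|[x,\Pi_\tau]\psi\|_{L^2_x}\lesssim\sqrt\tau\|\psi\|_{L^2_x}$ from Lemma~\ref{lemma_commutator_J_projector} with $\sigma=\delta=0$, this delivers exactly the $\sqrt\tau\|F\|_{\ell^{q_2'}_\tau L^{r_2'}_x}$ remainder (and, in the homogeneous case, a harmless $O(\sqrt\tau)\|\varphi\|_{L^2_x}$ contribution that is absorbed for $\tau\le1$).

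I expect the genuine difficulty to lie entirely in the $A=\Id$ ingredient rather than in its transfer to $\nabla$ and $J$: the loss of dispersion upon time-discretization means the discrete Strichartz inequality for $S_\tau$ is not a formal consequence of its continuous counterpart, and making quantitative the balance between the time step $\tau$ and the frequency cutoff $\tau^{-1/2}$ (the Bernstein comparison of sums with integrals, and the discrete Christ–Kiselev step) is the technical heart — which I would import wholesale from \cite{Carles2024} and \cite{Ignat2011} rather than reprove. Once that is in hand, the commutator bookkeeping for $J$ is routine, modulo the observation above that $[x,\Pi_\tau]$ maps into a frequency band of size $\tau^{-1/2}$, which is precisely what lets one replace the bare flow $S$ by a filtered flow there and thereby recover a discrete dispersive estimate for the leftover term.
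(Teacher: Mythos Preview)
Your proposal is correct and aligns with the paper's treatment: the paper does not prove this lemma but cites \cite[Corollary~3.4]{Carles2024}, and the commutator argument you give for $J$ is the same mechanism the paper invokes elsewhere (e.g.\ in the proof of Proposition~\ref{prop_discrete_space_continuous_Strichartz}). The only cosmetic difference is that where you observe $[x,\Pi_\tau]$ is frequency-localized and reinsert a dilated projector $\widetilde\Pi_\tau$ a posteriori, the paper (following Carles--Su) achieves the same effect by writing $\Pi_\tau=\Pi_\tau\Pi_{\tau/4}$ upfront, yielding three terms all carrying a filtered flow; both packagings are equivalent since $\Pi_{\tau/4}$ is precisely such a $\widetilde\Pi_\tau$.
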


\begin{remark}
In fact, the discrete Strichartz estimates above remain valid if the projected linear flow $S_{\tau}(t)$ is replaced by $S_{\kappa \tau}(t)$ for any $\kappa>0$. This flexibility will be useful in later proofs, particularly when working with the additional projector $\Pi_{\tau/4}$ or in establishing convergence of the~\eqref{NQS} scheme.
\end{remark}

\medskip

We also give this version of Strichartz estimates which comes from \cite[Corollary 3.5]{Carles2024}:

\medskip

\begin{lemma}\label{lemma_discrete_continuous_Strichartz}
Let $(q_1,r_1)$ and $(q_2,r_2)$ be admissible pairs, let $\tau>0$ and $A \in \left\{ \Id,\nabla \right\}$. Then we have for all $j \in \IN$ that
\begin{equation}
\Big\| A \int_0^{j \tau} S_{\tau}(j \tau-s) F(s) \drm s \Big\|_{\ell^{q_1}_{\tau} L^{r_1}_x} \lesssim  \| AF \|_{L^{q_2'}_{t} L^{r_2'}_x}
\end{equation}
and 
\begin{equation}
\Big\| J(t_j) \int_0^{j \tau} S_{\tau}(t_j-s) F(s) \drm s \Big\|_{\ell^{q_1}_{\tau} L^{r_1}_x} \lesssim  \sqrt{\tau} \| F \|_{L^{q_2'}_{t} L^{r_2'}_x} + \| J(t_j) F \|_{L^{q_2'}_{t} L^{r_2'}_x}
\end{equation}
for all $F \in L^{q_2'}(\IR ; L^{r_2'}(\IR^d))$. 
\end{lemma}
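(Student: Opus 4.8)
The plan is to reduce the two discrete-continuous estimates to the purely discrete ones of Lemma~\ref{lemma_discrete_Strichartz} by the standard trick of freezing the inhomogeneous term on each subinterval. First I would write $\int_0^{t_j}S_\tau(t_j-s)F(s)\,\mathrm ds=\sum_{\alpha=0}^{j-1}\int_{t_\alpha}^{t_{\alpha+1}}S_\tau(t_j-s)F(s)\,\mathrm ds$ and insert the decomposition $S_\tau(t_j-s)=S_\tau(t_j-t_\alpha)S_\tau(s-t_\alpha)^{-1}\cdots$; more cleanly, I would compare with the discretized sum $\tau\sum_{\alpha=0}^{j-1}S_\tau(t_j-t_\alpha)F(t_\alpha)$ by writing the difference as a double integral controlled by $\partial_s\big(S_\tau(t_j-s)F(s)\big)$, but the cleanest route (and the one matching \cite[Corollary 3.5]{Carles2024}) is to use the substitution $s=t_\alpha+\theta$, $\theta\in[0,\tau)$, to get
\[
A\int_0^{t_j}S_\tau(t_j-s)F(s)\,\mathrm ds=\int_0^\tau A\,\tau\sum_{\alpha=0}^{j-1}S_\tau(t_j-t_\alpha)\big[S_\tau(-\theta)F(t_\alpha+\theta)/\tau\big]\tau^{-1}\,\mathrm d\theta,
\]
so that for each fixed $\theta$ the inner object is exactly of the form handled by the discrete inhomogeneous Strichartz estimate with the discrete forcing $G_\theta(t_\alpha):=S_\tau(-\theta)F(t_\alpha+\theta)$. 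Then I apply Lemma~\ref{lemma_discrete_Strichartz} pointwise in $\theta$ and integrate in $\theta$ over $[0,\tau]$, using Minkowski's integral inequality to pull the $\ell^{q_1}_\tau L^{r_1}_x$ norm inside the $\theta$-integral.

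For the case $A\in\{\Id,\nabla\}$ this is immediate: $A$ commutes with $S_\tau$, so after applying the second estimate of Lemma~\ref{lemma_discrete_Strichartz} one is left with $\int_0^\tau \|A S_\tau(-\theta)F(\cdot+\theta)\|_{\ell^{q_2'}_\tau L^{r_2'}_x}\,\mathrm d\theta$; since $S_\tau(-\theta)$ is an $L^2$-unitary composed with a frequency cutoff it is bounded on $L^{r_2'}_x$ uniformly in $\theta$ (by Lemma~\ref{lemma_projection_Fourier} and the fact that $e^{-i\theta\Delta}$ preserves frequency supports, so on the fixed dyadic block where $\Pi_\tau$ lives one can use Bernstein), and then $\int_0^\tau\|AF(\cdot+\theta)\|_{\ell^{q_2'}_\tau L^{r_2'}_x}\,\mathrm d\theta\lesssim \|AF\|_{L^{q_2'}_t L^{r_2'}_x}$ by Hölder in $\theta$ together with the comparison between the discrete $\ell^{q_2'}_\tau$ norm of a shifted sequence and the continuous $L^{q_2'}_t$ norm (this last comparison is itself a routine consequence of subdividing $\mathbb R$ into intervals of length $\tau$; it is exactly the reverse direction used to pass from continuous to discrete Strichartz and is already implicit in \cite{Carles2024}). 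Collecting these bounds gives the first displayed inequality.

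For $A=J(t)$ the new ingredient is that $J$ does not commute with $S_\tau$, so I would proceed exactly as in the proof of Corollary~\ref{corollary_discrete_flow_continuous_Strichartz} and the $J$-part of Lemma~\ref{lemma_discrete_Strichartz}: write, for each $\theta$,
\[
J(t_j)S_\tau(t_j-t_\alpha)G_\theta(t_\alpha)=S(t_j-t_\alpha)\big[J(t_\alpha),\Pi_\tau\big]G_\theta(t_\alpha)+S_\tau(t_j-t_\alpha)J(t_\alpha)G_\theta(t_\alpha),
\]
so the second term is handled by the $J$-version of the discrete inhomogeneous Strichartz estimate in Lemma~\ref{lemma_discrete_Strichartz} applied to the discrete forcing $J(t_\alpha)G_\theta(t_\alpha)=J(t_\alpha)S_\tau(-\theta)F(t_\alpha+\theta)$, while the commutator term is controlled by the (non-projected) discrete Strichartz estimate together with Lemma~\ref{lemma_commutator_J_projector} with $\sigma=\delta=0$, yielding the $\sqrt\tau\|F\|$ loss. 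I would then need the identity $J(t_\alpha)S_\tau(-\theta)=S_\tau(-\theta)J(t_\alpha-\theta)$ modulo a further commutator $S(-\theta)[J(t_\alpha-\theta),\Pi_\tau]$, which again contributes a $\sqrt\tau$-term by Lemma~\ref{lemma_commutator_J_projector}; and finally $J(t_\alpha-\theta)F(t_\alpha+\theta)=J(t_\alpha+\theta)F(t_\alpha+\theta)-4i\theta\nabla F(t_\alpha+\theta)$, where the extra $\theta\nabla F$ term is absorbed into $\sqrt\tau\|F\|$ after using $|\theta|\le\tau$ and a frequency-localization bound of $\nabla$ on $\Pi_\tau$-localized functions (Bernstein again costs $\tau^{-1/2}$, so $\theta\nabla$ is $O(\sqrt\tau)$). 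Integrating in $\theta\in[0,\tau]$ and using Minkowski as before gives the claimed bound with the $\sqrt\tau\|F\|_{L^{q_2'}_t L^{r_2'}_x}+\|J(t_j)F\|_{L^{q_2'}_t L^{r_2'}_x}$ right-hand side.

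The main obstacle, and the place requiring care, is keeping track of \emph{all} the commutator and time-shift error terms generated when moving the operator $J(t_j)$ past the propagators and past the time substitution $s\mapsto t_\alpha+\theta$: each such manipulation produces either a $[J,\Pi_\tau]$ commutator (good, because Lemma~\ref{lemma_commutator_J_projector} gives a $\sqrt\tau$ gain) or a term of the form $\theta\,\nabla$ acting on a frequency-localized function (also good, since $|\theta|\le\tau$ beats the $\tau^{-1/2}$ from Bernstein), but one must verify that no term escapes this dichotomy and that the surviving $\|J(t_j)F\|$ term appears with exactly the shift $t_\alpha+\theta$ needed to compare with the continuous norm $\|J(t)F\|_{L^{q_2'}_t L^{r_2'}_x}$. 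Since this is essentially \cite[Corollary 3.5]{Carles2024} with $A=J$ made explicit, I would cite that reference for the bookkeeping and only spell out the one extra shift identity $J(t_\alpha-\theta)=J(t_\alpha+\theta)-4i\theta\nabla$ that is particular to our setup.
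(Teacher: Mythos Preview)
The paper does not give its own proof of this lemma: it simply quotes it as coming from \cite[Corollary 3.5]{Carles2024}. Your sketch correctly reconstructs that argument --- the time-shift $s=t_\alpha+\theta$, Minkowski in $\theta$, then discrete Strichartz from Lemma~\ref{lemma_discrete_Strichartz} for each fixed $\theta$ --- and the $J$-part via successive commutators and the identity $J(t_\alpha-\theta)=J(t_\alpha+\theta)-4i\theta\nabla$ is the right bookkeeping.

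One point to tighten: in your decomposition $J(t_j)S_\tau(t_j-t_\alpha)=S(t_j-t_\alpha)[J(t_\alpha),\Pi_\tau]+S_\tau(t_j-t_\alpha)J(t_\alpha)$, the first term carries the \emph{unprojected} flow $S(t_j-t_\alpha)$, which does not satisfy discrete Strichartz as stated in Lemma~\ref{lemma_discrete_Strichartz}. This is not fatal --- the commutator $[J,\Pi_\tau]$ has Fourier support in the annulus where $\nabla\chi(\sqrt\tau\,\cdot)$ lives, so one can insert a projector $\Pi_{\tau/4}$ for free and invoke the remark after Lemma~\ref{lemma_discrete_Strichartz} --- but you should say so. Alternatively, use from the outset the identity $\Pi_\tau=\Pi_\tau\Pi_{\tau/4}$ and the three-term splitting
\[
J(t_j)S_\tau(t_j-t_\alpha)=[J(t_j),\Pi_\tau]S_{\tau/4}(t_j-t_\alpha)+S_\tau(t_j-t_\alpha)[J(t_\alpha),\Pi_{\tau/4}]+S_\tau(t_j-t_\alpha)J(t_\alpha),
\]
exactly as in the proof of Proposition~\ref{prop_discrete_space_continuous_Strichartz}; then every propagator is already filtered and the issue disappears.
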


We finally remark that truncated iterates $(U_n)_n$ also satisfies dispersive estimates in discrete norms.

\begin{proposition}\label{prop_discrete_space_continuous_Strichartz}
For $(q,r)$ an admissible pair and any $\sigma\ge0$, we have 
\begin{equation} 
\|A U_n^{\tau}(t_j)\|_{\ell^q_{\tau} W_x^{\sigma,r}} \lesssim \| A \varphi \|_{H^\sigma_x}^{(p-1)n+1}
\end{equation}
for $A \in \left\{ \Id, \nabla\right\}$ and 
\begin{equation} 
\|J(t_j)U_n^{\tau}(t_j)\|_{\ell^q_{\tau} W_x^{\sigma,r}} \lesssim \| x \varphi \|_{H^\sigma_x}^{(p-1)n+1}
\end{equation}
for any $n\ge0$.
\end{proposition}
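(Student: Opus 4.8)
The plan is to prove Proposition~\ref{prop_discrete_space_continuous_Strichartz} by induction on $n\ge0$, following the scheme of Proposition~\ref{prop_unif_space_time_bounds_Un^tau} but now working in the semi-discrete norms $\ell^q_\tau W^{\sigma,r}_x$ and invoking the discrete dispersive estimates of Lemma~\ref{lemma_discrete_continuous_Strichartz} in place of their continuous counterparts. For the base case $n=0$, one has $U_0^\tau(t_j)=S_\tau(t_j)\varphi$, and since $D^\sigma$ and $\nabla$ commute with $S_\tau$, the estimate for $A\in\{\Id,\nabla\}$ is exactly the homogeneous discrete Strichartz estimate of Lemma~\ref{lemma_discrete_Strichartz}. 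For $A=J$ one again uses the commutator identity $J(t_j)S_\tau(t_j)\varphi = S_\tau(t_j)J(0)\varphi + S(t_j)[x,\Pi_\tau]\varphi$ together with the commutator bound of Lemma~\ref{lemma_commutator_J_projector} (with $\sigma=\delta=0$) and the homogeneous discrete Strichartz estimate, exactly as in the proof of Corollary~\ref{corollary_discrete_flow_continuous_Strichartz}, giving the bound since $\tau\le1$.

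For the inductive step $n\ge1$, I would write $U_n^\tau(t_j) = -i\int_0^{t_j} S_\tau(t_j-s)F_n^\tau(s)\,\drm s$ and apply the inhomogeneous estimate of Lemma~\ref{lemma_discrete_continuous_Strichartz}, which directly converts the $\ell^{q_1}_\tau W^{\sigma,r}_x$ norm of the discrete-in-time output into the continuous $L^{q_2'}_t W^{\sigma,r_2'}_x$ norm of the forcing $F_n^\tau$ (or of $J(t)F_n^\tau$ in the case $A=J$, picking up the harmless extra $\sqrt{\tau}\|F_n^\tau\|$ term). At that point the proof reduces to estimating $\|D^\sigma A F_n^\tau\|_{L^{q_2'}_t L^{r_2'}_x}$, and this is precisely the continuous-norm computation already carried out in Proposition~\ref{prop_unif_space_time_bounds_Un}: expand $F_n^\tau$ as a sum over $n_1+\dots+n_p=n-1$, apply the fractional Leibniz rule~\eqref{eq_fractional_Leibniz_rule}, distribute the derivative and (when $A=J$) the operator $J$ across the product, then use Hölder in time with the admissible pairs $(q,r)=\left(\frac{4(p+1)}{d(p-1)},p+1\right)$ and the auxiliary exponent $\gamma$, controlling each spare factor $\|U_{n_K}^\tau\|_{L^\gamma_t L^r_x}$ via the weighted Sobolev inequality~\eqref{eq_weighted_Sobolev} and the already-established continuous bounds of Proposition~\ref{prop_unif_space_time_bounds_Un^tau}. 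The terms where $J$ lands on a low-order factor are handled, as before, through the Sobolev embedding $W^{\delta,\rho}_x\hookrightarrow L^r_x$ with $(\gamma,\rho)$ admissible.

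The point I expect to require the most care is bookkeeping rather than a genuine analytic obstacle: namely, that in the inhomogeneous discrete Strichartz estimate the \emph{forcing} is evaluated in continuous time, so after one application of Lemma~\ref{lemma_discrete_continuous_Strichartz} one is back to purely continuous norms of $F_n^\tau$, and the induction hypothesis needed to close is the \emph{continuous} bound of Proposition~\ref{prop_unif_space_time_bounds_Un^tau}, not the discrete one being proved. In other words, the discrete estimate for $U_n^\tau$ at level $n$ follows from continuous estimates at levels $<n$, so no circularity arises and the induction in fact decouples from itself after the first step. I would state this explicitly to make the logical structure transparent. The case $A=J$ additionally produces the commutator terms $[J,\Pi_\tau]F_n^\tau$ coming from $J(t_j)S_\tau(t_j-s) = S_\tau(t_j-s)J(s) + S(t_j-s)[J(s),\Pi_\tau]$; these are absorbed using Lemma~\ref{lemma_commutator_J_projector} together with the fractional Leibniz rule and Proposition~\ref{prop_unif_space_time_bounds_Un^tau}, exactly mirroring the treatment of $\mathcal{I}_2,\mathcal{I}_3$ in the proof of Proposition~\ref{prop:Unstarbound}, and the resulting powers of $\tau$ are harmless since $\tau\le1$. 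This completes the induction and hence the proof.
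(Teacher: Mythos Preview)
Your overall strategy is correct and matches the paper's: induct on $n$, use Lemma~\ref{lemma_discrete_continuous_Strichartz} to convert the $\ell^q_\tau$ norm of $U_n^\tau(t_j)$ into a \emph{continuous} $L^{q_2'}_t$ norm of $F_n^\tau$, and then close using the already-established continuous bounds of Proposition~\ref{prop_unif_space_time_bounds_Un^tau}. Your observation that the induction ``decouples'' after one step is exactly right and is the logical heart of the argument.

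However, there is a genuine gap in your treatment of $A=J$. Both in the base case and in the inductive step you invoke the commutator identity
\[
J(t_j)S_\tau(t_j-s)=S_\tau(t_j-s)J(s)+S(t_j-s)[J(s),\Pi_\tau],
\]
which produces a term carrying the \emph{unprojected} flow $S(t_j-s)$. You then claim this term is handled ``exactly mirroring the treatment of $\mathcal{I}_2,\mathcal{I}_3$'' in Proposition~\ref{prop:Unstarbound}. But that treatment was in continuous $L^q_t L^r_x$ norms, where $S$ enjoys Strichartz estimates; in the discrete norms $\ell^q_\tau L^r_x$ the bare flow $S$ does \emph{not} satisfy Strichartz, so the step fails as written. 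The paper flags this explicitly as ``the main challenge'' of the proof.

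The paper's fix is the projector-composition trick $\Pi_\tau=\Pi_\tau\Pi_{\tau/4}$ (from \cite[Corollary 3.4]{Carles2024}), which yields
\[
J(t)S_\tau(t-s)=[J(t),\Pi_\tau]S_{\tau/4}(t-s)+S_\tau(t-s)[J(s),\Pi_{\tau/4}]+S_\tau(t-s)J(s),
\]
so that only projected flows $S_\tau$, $S_{\tau/4}$ appear and the discrete Strichartz machinery applies to every piece. Equivalently, you could observe that $[J(s),\Pi_\tau]$ is Fourier-supported on the ring $\{1\le|\sqrt{\tau}\xi|\le 2\}$, whence $S(t_j-s)[J(s),\Pi_\tau]=S_{\tau/4}(t_j-s)[J(s),\Pi_\tau]$ and your identity becomes usable. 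Either way, this missing observation is what you need to insert; once it is in place, the rest of your argument goes through exactly as you describe. Alternatively, since Lemmas~\ref{lemma_discrete_Strichartz} and~\ref{lemma_discrete_continuous_Strichartz} already state the $J$-estimates as black boxes (with the double-projector trick absorbed into their proofs), you may simply invoke them directly and drop your explicit commutator identity altogether.
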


\medskip

\begin{proof}
As before, the proof (by induction) follows the structure of Propositions~\ref{prop_unif_space_time_bounds_Un} and~\ref{prop_unif_space_time_bounds_Un^tau}, this time using the discrete Strichartz estimates from Lemmas~\ref{lemma_discrete_continuous_Strichartz} in place of the continuous ones. We provide the proof only for $A=J(t)$, as the cases $A\in\left\{ \Id,\nabla \right\}$  follow similar arguments and are in fact simpler. The main challenge here is that the continuous linear flow $S(t)$ does not satisfy dispersive estimates in discrete time norms $\ell^q_{\tau} L^r_x$, so one cannot make the continuous linear flow $S(t)$ appear in the commutator expressions involving the operator $J(t)$ as in the proof of Proposition~\ref{prop_unif_space_time_bounds_Un^tau}. To overcome this, we adopt the strategy from \cite[Corollary 3.4]{Carles2024}, based on projector composition. Specifically, we write
\[  \Pi_{\tau}=\Pi_{\tau}\Pi_{\tau/4}, \]
which allows us to derive, in the same way as \cite[Corollary 3.4]{Carles2024},
\[ J(t)S_{\tau}(t-s)\varphi= \left[J(t),\Pi_{\tau} \right]S_{\tau/4}(t-s)\varphi +S_{\tau}(t-s) \left[ J(s),\Pi_{\tau/4} \right]\varphi +S_{\tau}(t-s)J(s)\varphi \]
where only the discrete flow  $S_{\tau}$ appears, thus remaining compatible with the discrete Strichartz framework. We then directly get the initialization $n=0$ from the homogeneous Strichartz estimate
\begin{align*}
\| J(t) S_{\tau}(t)\varphi \|_{\ell^q_{\tau} L^r_x} & \lesssim  \| \left[J(t),\Pi_{\tau} \right]S_{\tau/4}(t-s)\varphi \|_{\ell^q_{\tau} L^r_x} + \| S_{\tau}(t) \left[x,\Pi_{\tau/4} \right] \varphi \|_{\ell^q_{\tau} L^r_x} + \| S_{\tau}(t) x \varphi \|_{\ell^q_{\tau} L^r_x} \\
& \lesssim \sqrt{\tau} \| S_{\tau/4}(t-s)\varphi \|_{\ell^q_{\tau} L^r_x} + \| \left[x,\Pi_{\tau/4} \right] \varphi \|_{\ell^q_{\tau} L^r_x} + \|  x \varphi \|_{L^2_x} \\
& \lesssim \sqrt{\tau} \| \varphi \|_{L^2_x} + \|  x \varphi \|_{L^2_x}
\end{align*}
where we have used combinations of Lemma \ref{lemma_commutator_J_projector} with $\sigma=\delta=0$ and Lemma \ref{lemma_discrete_continuous_Strichartz}, which gives the result as $\tau \leq 1$. For $n \geq 1$, we simply write in view of the above identity that
\begin{align*}
J(t_j) U_n^{\tau}(t_j) & = -i J(t_j) \int_0^{t_j} S_{\tau}(t_j-s) F_n^{\tau}(s) \drm s \\
& = - i \int_0^{t_j} \left[J(t_j),\Pi_{\tau}  \right] S_{\tau/4}(t_j -s ) F_n^{\tau}(s) \drm s -i \int_0^{t_j} S_{\tau}(t_j -s ) \left[J(s),\Pi_{\tau/4}  \right]  F_n^{\tau}(s) \drm s \\
& \quad -i \int_0^{t_j} S_{\tau}(t_j -s ) J(s)  F_n^{\tau}(s) \drm s
\end{align*}
so using combinations of Lemma \ref{lemma_commutator_J_projector} with $\delta=0$ and Lemma \ref{prop_discrete_space_continuous_Strichartz}, we infer
\begin{align*}
\|J(t_j)U_n^{\tau}(t_j) \|_{\ell^{q}_{\tau} W^{\sigma,r}_x} & \lesssim \sqrt{\tau} \| \int_0^{t_j} S_{\tau/4}(t_j-s) F_n^{\tau}(s) \drm s \|_{\ell^{q}_{\tau} W^{\sigma,r}_x} +\| \left[ J(s),\Pi_{\tau/4} \right] F_n^{\tau}(s) \drm s \|_{L^{q_2'}_t W^{\sigma,r_2'}_x} \\
& \quad + \| J(t)F_n^{\tau} \|_{L^{q_2'}_t W^{\sigma,r_2'}_x} \\
& \lesssim \| F_n^{\tau} \|_{L^{q_2'}_t W^{\sigma,r_2'}_x}+\| J(t)F_n^{\tau} \|_{L^{q_2'}_t W^{\sigma,r_2'}_x}
\end{align*}
We can then conclude by induction the same way as in the proof of Proposition \ref{prop_unif_space_time_bounds_Un} using the results from Proposition \ref{prop_unif_space_time_bounds_Un^tau}, which ends the proof.
\end{proof}

\subsection{Dispersive estimates for NQS}

We now present discrete dispersive estimates specific to \eqref{NQS}. 

\medskip

\begin{proposition} \label{prop_unif_discrete_space_time_bounds_NQS}
Let $(\mathfrak{U}_n^{j})_{n,j}$ be defined by \eqref{NQS} for $0\leq n \leq 3$. For $(q,r)$ an admissible pair, $A\in \left\{\Id,\nabla \right\}$ and $\sigma \geq 0$, we have
\begin{equation}
\|A \mathfrak{U}_n^{j}\|_{\ell^q_{\tau} W^{\sigma,r}_x} \lesssim \| A\varphi \|_{L^2}^{(p-1)n+1}
\end{equation}
as well as
\begin{equation}
\|J(t_j) \mathfrak{U}_n^{j}\|_{\ell^q_{\tau} W^{\sigma,r}_x} \lesssim \| \varphi \|_{\Sigma}^{(p-1)n+1}.
\end{equation}
\end{proposition}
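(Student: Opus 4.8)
The plan is to prove Proposition~\ref{prop_unif_discrete_space_time_bounds_NQS} by induction on $n\in\{0,1,2,3\}$, following step by step the arguments of Propositions~\ref{prop_unif_space_time_bounds_Un} and~\ref{prop_discrete_space_continuous_Strichartz}, the difference being that every estimate is now carried out in the purely discrete norms $\ell^q_\tau L^r_x$, for which only the filtered flows $S_\tau$ and $S_{\kappa\tau}$ (for $\kappa>0$, by the remark following Lemma~\ref{lemma_discrete_Strichartz}) satisfy Strichartz bounds. For the base case $n=0$ one has $\mathfrak U_0^j=S_\tau(t_j)\varphi$ on $\tau\IZ$ and $\mathfrak U_0^{j+1/2}=S_\tau(t_{j+1/2})\varphi$ on $\tfrac\tau2\IZ$; for $A\in\{\Id,\nabla\}$ this is the homogeneous estimate of Lemma~\ref{lemma_discrete_Strichartz} after commuting $D^\sigma$ through $S_\tau$, while for $A=J$ one writes, using $\Pi_\tau=\Pi_\tau\Pi_{\tau/4}$,
\[ J(t)S_\tau(t)\varphi=\big[J(t),\Pi_\tau\big]S_{\tau/4}(t)\varphi+S_\tau(t)\big[x,\Pi_{\tau/4}\big]\varphi+S_\tau(t)x\varphi \]
and controls the first two terms with the commutator bound of Lemma~\ref{lemma_commutator_J_projector} (with $\sigma=\delta=0$) together with the homogeneous discrete estimates — which is legitimate precisely because only $S_\tau$ and $S_{\tau/4}$ appear.

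For the inductive step I would first record that, by construction, $\mathfrak U_n^j$ is a discrete Duhamel sum $\mathfrak U_n^j=-i\tau\sum_k w_k\,S_\tau(t_j-t_k)\,\mathfrak F_n(t_k)$ over a \emph{uniform} grid — namely $\tau\IZ$ for $n=2,3$ (trapezoidal and left-rectangle rules) and $\tfrac\tau2\IZ$ for $n=1$ (Simpson's rule, which also requires $\mathfrak U_0$ at half-integer times) — with $t_j$ lying on that same grid. The nested design of~\eqref{NQS} guarantees that each $\mathfrak F_n(t_k)$ only ever involves lower-order iterates $\mathfrak U_{n'}(t_k)$ on a grid on which they have already been constructed and estimated, so the induction hypothesis applies verbatim. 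One then applies the inhomogeneous discrete Strichartz estimate of Lemma~\ref{lemma_discrete_Strichartz} directly — on $\tfrac\tau2\IZ$ using $S_\tau=S_{2\cdot(\tau/2)}$, and afterwards restricting the resulting $\ell^q_{\tau/2}L^r_x$ bound for $\mathfrak U_1$ to the coarse grid $\tau\IZ$ at no cost — which reduces the bound for $A\mathfrak U_n^j$ to a bound for $A\mathfrak F_n$ in $\ell^{q_2'}_\tau L^{r_2'}_x$. For $A=J$ one inserts beforehand the same commutator decomposition as in the base case (and as in Proposition~\ref{prop_discrete_space_continuous_Strichartz}), so that again only $S_\tau,S_{\tau/4}$ occur and each commutator contributes an $O(\sqrt\tau)$ term harmless by Lemma~\ref{lemma_commutator_J_projector}.

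It then remains to estimate the $p$-linear nonlinearity $\mathfrak F_n(t_k)$, which is done exactly as for the terms $I_1,I_2$ in the proof of Proposition~\ref{prop_unif_space_time_bounds_Un}: distribute $D^\sigma$ (and the derivative action of $J$ recalled after~\eqref{eq_weighted_Sobolev}) onto a single factor via the fractional Leibniz rule~\eqref{eq_fractional_Leibniz_rule}, apply discrete Hölder in $k$ with the admissible pair $(q,r)=\big(\tfrac{4(p+1)}{d(p-1)},\,p+1\big)$ and the exponent $\gamma=\tfrac{2(p-1)(p+1)}{4-(d-2)(p-1)}$, bound the distinguished factor by the induction hypothesis, and bound each spare factor $\|\mathfrak U_{n'}(t_k)\|_{L^r_x}$ by the weighted Sobolev inequality~\eqref{eq_weighted_Sobolev} together with the discrete $J$-bound from the induction hypothesis (with the pair $(\infty,2)$). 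As in the continuous proof, one must split the sum over $k$: for $t_k\lesssim 1$ use the plain Sobolev embedding $W^{\delta,2}_x\hookrightarrow L^r_x$ (no time decay, but controlled by the $\ell^\infty_\tau$ bound), and invoke the $|t_k|^{-\delta}$ decay only for $t_k\gtrsim 1$, where $\gamma\delta>1$ (valid in our range of $p$) makes $\tau\sum_{t_k\gtrsim1}|t_k|^{-\gamma\delta}$ bounded uniformly in $J$; this splitting is what prevents a spurious negative power of $\tau$ from appearing in the discrete setting. Assembling these bounds closes the induction.

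The one genuinely delicate point I anticipate — already isolated in~\cite{Carles2024} and in Proposition~\ref{prop_discrete_space_continuous_Strichartz} — is keeping the continuous flow $S(t)$ out of every computation involving $J(t)$, so that all dispersive inputs are genuinely discrete Strichartz estimates; by contrast, the off-grid quadrature nodes, the quadrature weights, and the finer grid needed for $\mathfrak U_0$ are routine bookkeeping, rendered harmless by the nested construction of~\eqref{NQS} and by the $S_{\kappa\tau}$ flexibility of Lemma~\ref{lemma_discrete_Strichartz}.
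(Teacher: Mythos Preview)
Your proposal is correct and follows essentially the same route as the paper: induction on $n$, the commutator trick $\Pi_\tau=\Pi_\tau\Pi_{\tau/4}$ to keep only filtered flows in the $J$-estimates, discrete Strichartz on the $\tfrac{\tau}{2}$-grid for $n=1$ via the $S_{\kappa\tau}$ remark, and H\"older plus weighted Sobolev for the nonlinearity with the same exponents $(q,r,\gamma)$. Your explicit splitting $|t_k|\lesssim 1$ versus $|t_k|\gtrsim 1$ to avoid a spurious $\tau^{-\gamma\delta}$ factor is in fact more careful than the paper, which (as in Proposition~\ref{prop_unif_space_time_bounds_Un}) leaves this step implicit.
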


\medskip

\begin{proof}
For $n=0$, we remark that by definition
\[ \| J(t_j) \mathfrak{U}_0^j \|_{\ell^q_{\tau} W^{\sigma,r}_x} = \| J(t_j) U_0^{\tau}(t_j) \|_{\ell^q_{\tau} W^{\sigma,r}_x}   \]
so the result is directly given by Proposition \ref{prop_discrete_space_continuous_Strichartz}. For $n=1$, considering commutators as before and in view of definition of \eqref{NQS}, we write that
\begin{align*}
J(t_j) \mathfrak{U}_1^j  = &-i\tau \sum_{\alpha=0}^{j-1} \sum_{\beta=0}^2 \left[ J(t_j),\Pi_{\tau} \right] S_{\tau/4}(t_j-t_{\alpha,\beta}^{(2)}) \left( \omega_{\beta}^{(2)} \mathfrak{F_1}^{\alpha+\frac{\beta}{2}}  \right) \\
& -i\tau \sum_{\alpha=0}^{j-1} \sum_{\beta=0}^2 S_{\tau}(t_j-t_{\alpha,\beta}^{(2)}) \left[ J(t_{\alpha,\beta}^{(2)}),\Pi_{\tau/4} \right]  \left( \omega_{\beta}^{(2)} \mathfrak{F_1}^{\alpha+\frac{\beta}{2}}  \right) \\
& -i\tau \sum_{\alpha=0}^{j-1} \sum_{\beta=0}^2 S_{\tau}(t_j-t_{\alpha,\beta}^{(2)})  \left( \omega_{\beta}^{(2)} \mathfrak{F_1}^{\alpha+\frac{\beta}{2}}  \right).
\end{align*}
As $S_{\tau}(t_j-t_{\alpha,\beta}^{(2)})$ and $S_{\tau/4}(t_j-t_{\alpha,\beta}^{(2)})$ are applied on regular grids with step size $\tau/2$, one can apply discrete Strichartz estimates from Lemma \ref{lemma_discrete_Strichartz}, alongside commutator error from Lemma \ref{lemma_commutator_J_projector}, which gives
\[ \| J(t_j) \mathfrak{U}_1^j \|_{\ell^q_{\tau} W^{\sigma,r}_x} \leq \| J(t_{\alpha}) \mathfrak{U}_1^{\alpha} \|_{\ell^q_{\tau/2} W^{\sigma,r}_x} \lesssim \| \mathfrak{R}^{\alpha} \|_{\ell^{q_2'}_{\tau/2} W^{\sigma,r_2'}_x} + \| J(t_{\alpha} \mathfrak{R}^{\alpha} \|_{\ell^{q_2'}_{\tau/2} W^{\sigma,r_2'}_x} \]
where
\[ 
\left\{
\begin{aligned}
&\mathfrak{R}^{\alpha}\coloneqq \omega_{0}^{(2)} \mathfrak{F}_1^{\alpha} + \omega_{0}^{(2)}\mathfrak{F}_1^{\alpha +1} = \frac{1}{3} \left( |\mathfrak{U}_0^{\alpha}|^{p-1} \mathfrak{U}_0^{\alpha} + |\mathfrak{U}_0^{\alpha+1}|^{p-1} \mathfrak{U}_0^{\alpha+1} \right) \\
&\mathfrak{R}^{\alpha+\frac12}\coloneqq \omega_{1}^{(2)} \mathfrak{F}_1^{\alpha+\frac12} = \frac{2}{3} |\mathfrak{U}_0^{\alpha+\frac12}|^{p-1} \mathfrak{U}_0^{\alpha+\frac12}
\end{aligned} \right.   \]
for $\alpha \in \tau \IZ$. It is then direct to write that
\[ \| \mathfrak{R}^{\alpha} \|_{\ell^{q_2'}_{\tau/2} W^{\sigma,r_2'}_x} \lesssim \| \mathfrak{F}_1^{\alpha} \|_{\ell^{q_2'}_{\tau} W^{\sigma,r_2'}_x} + \| \mathfrak{F}_1^{\alpha+\frac12} \|_{\ell^{q_2'}_{\tau} W^{\sigma,r_2'}_x}.  \]
The first term in the above estimate is handled classically by a combination of Hölder inequality and weighted Sobolev inequality as in the proof of Proposition \ref{prop_unif_space_time_bounds_Un}. We now remark that as 
\[ \| S_{\tau}(t_j) S_{\tau}(\tau/2) \varphi \|_{\ell^q_{\tau} L^r_x} \lesssim \| S_{\tau}(\tau/2) \varphi \|_{L^2_x} \leq  \| \varphi \|_{L^2_x} \]
by discrete Strichartz estimates Lemma \ref{lemma_discrete_Strichartz} and continuity of $\Pi_{\tau}$ \ref{lemma_projection_Fourier}, the second term satisfies the same discrete dispersive estimates as the first term, and we can conclude similarly. The proofs for~$(\mathfrak{U}_2^j)_j$ and $(\mathfrak{U}_3^j)_j$ follow the same lines, and are in fact simpler, as they do not require the use of finer temporal grids.
\end{proof}

\subsection{Dispersive estimates for NTS}

We also prove discrete dispersive estimates for \eqref{NTS}.

\medskip

\begin{proposition} \label{prop_unif_discrete_space_time_bounds_NTS}
Let $(\mathcal{U}_n^{j,k})_{n,j,k}$ be defined by \eqref{NTS} for $0\leq n \leq N-1$. For $(q,r)$ an admissible pair, $A\in \left\{\Id,\nabla \right\}$ and $\sigma \geq 0$, we have
\begin{equation}
\|A \mathcal{U}_n^{j}\|_{\ell^q_{\tau} W^{\sigma,r}_x} \lesssim \| A\varphi \|_{H^k_x}^{(p-1)n+1}
\end{equation}
as well as
\begin{equation}
\|J(t_j) \mathcal{U}_n^{j}\|_{\ell^q_{\tau} W^{\sigma,r}_x} \lesssim \| \varphi \|_{\Sigma\cap H^k_x}^{(p-1)n+1}.
\end{equation}
\end{proposition}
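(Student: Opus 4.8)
The proof is by induction on $n\geq0$ and follows the template of Propositions \ref{prop_unif_space_time_bounds_Un}, \ref{prop_unif_space_time_bounds_Un^tau} and especially \ref{prop_discrete_space_continuous_Strichartz}, now applied to the decorated-tree expansion defining \eqref{NTS}. For $n=0$ one has $\mathcal{U}_0^{j,k}=S_\tau(t_j)\nabla^k\varphi$; since $\nabla$, $\nabla^k$ and $D^\sigma$ commute with $S_\tau$, the bounds for $A\in\{\Id,\nabla\}$ are immediate from the homogeneous discrete Strichartz estimate of Lemma \ref{lemma_discrete_Strichartz}, and the bound for $A=J(t)$ follows by inserting the projector-composition identity $\Pi_\tau=\Pi_\tau\Pi_{\tau/4}$ to write $J(t_j)S_\tau(t_j)\psi=[J(t_j),\Pi_\tau]S_{\tau/4}(t_j)\psi+S_\tau(t_j)[x,\Pi_{\tau/4}]\psi+S_\tau(t_j)x\psi$, then combining the commutator estimate of Lemma \ref{lemma_commutator_J_projector} (which gains a factor $\sqrt\tau\leq1$) with Lemma \ref{lemma_discrete_Strichartz}, exactly as in the initialization of Proposition \ref{prop_discrete_space_continuous_Strichartz}.

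For $n\geq1$ we insert the definition \eqref{NTS} and regard $\mathcal{U}_n^{j,k}$ as $-i\tau\sum_{\alpha=0}^{j-1}S_\tau(t_j-t_\alpha)\,\mathcal{G}_\alpha$ with $\mathcal{G}_\alpha\coloneqq\sum_{\beta\le m_n^k}\sum_{a\in\CT_n^{\beta,k}}c_k(a)\frac{\tau^\beta}{(\beta+1)!}a_\alpha^\cut$, where by \eqref{eq:arbrecut} each $a_\alpha^\cut$ is a product $\prod_e\nabla^{k_e}b_e$ whose factors are of the form $\mathcal{U}_{n_e}^{\alpha,k_e}$ (or their conjugates) with $n_e<n$, so that the induction hypothesis applies to every factor. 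For $A\in\{\Id,\nabla\}$ we apply the inhomogeneous discrete Strichartz estimate of Lemma \ref{lemma_discrete_Strichartz} in $W^{\sigma,r}_x$, reducing the problem to bounding $\|\mathcal{G}\|_{\ell^{q_2'}_\tau W^{\sigma,r_2'}_x}$; we then use the fractional Leibniz rule \eqref{eq_fractional_Leibniz_rule} to distribute $D^\sigma$ (and $A$) over the product, and Hölder in discrete time with the admissible pairs $(q,r)$ and $(\gamma,\rho)$ introduced in the proof of Proposition \ref{prop_unif_space_time_bounds_Un}, placing the one factor carrying the derivatives in $\ell^q_\tau W^{\cdot,r}_x$ (controlled by the induction hypothesis) and the remaining $p-1$ factors in $\ell^\gamma_\tau L^r_x$. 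The latter norms are finite and \emph{uniform in $\tau$ and $T$}: for $t_\alpha\ge1$ one uses the weighted Sobolev decay $\|\mathcal{U}_{n_e}^{\alpha,k_e}\|_{L^r_x}\lesssim|t_\alpha|^{-\delta}\|\mathcal{U}_{n_e}^{\alpha,k_e}\|_{L^2_x}^{1-\delta}\|J(t_\alpha)\mathcal{U}_{n_e}^{\alpha,k_e}\|_{L^2_x}^{\delta}$ from \eqref{eq_weighted_Sobolev} with $\gamma\delta>1$, so that $\tau\sum_{\alpha:\,t_\alpha>1}|t_\alpha|^{-\gamma\delta}\lesssim1$ (the tail of a convergent series cancels the factor $\tau^{1-\gamma\delta}$), while for $t_\alpha\le1$ one uses a Sobolev embedding together with the $(\infty,2)$-Strichartz bound, so that the number $\approx\tau^{-1}$ of such indices is compensated by the prefactor $\tau$; the $L^2_x$ and $J$-norms entering these estimates are bounded by the induction hypothesis. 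The time-zero contribution, for which $a_0^\cut$ is either zero or a product of $\nabla^{k_e}\varphi$, is controlled directly through the Sobolev product rule \eqref{eq_algebra_sobolev}, so the singularity of \eqref{eq_weighted_Sobolev} at $t=0$ plays no role. Multiplying the resulting bounds over the factors yields the exponent $(p-1)n+1$, which is the number of leaves of any $a\in\CT_n^{\beta,k}$.

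The estimate for $A=J(t)$ is the delicate case, since $J$ does not commute with $S_\tau$ and $S(t)$ satisfies no discrete Strichartz bound. As in Proposition \ref{prop_discrete_space_continuous_Strichartz} we expand $J(t_j)S_\tau(t_j-t_\alpha)a_\alpha^\cut$ via $\Pi_\tau=\Pi_\tau\Pi_{\tau/4}$ into three pieces: two carry a commutator $[J,\Pi_\tau]$ or $[J,\Pi_{\tau/4}]$, which Lemma \ref{lemma_commutator_J_projector} turns into a $\sqrt\tau$ gain absorbed after applying discrete Strichartz, and the third is $S_\tau(t_j-t_\alpha)J(t_\alpha)a_\alpha^\cut$. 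On this last piece one uses that $J(t_\alpha)$ acts as a derivation on products (the Leibniz-type identity for $J$ recalled in Section \ref{section_dispersive_estimates}) to distribute it over $\prod_e\nabla^{k_e}b_e$; exactly one factor then becomes $J(t_\alpha)\mathcal{U}_{n_e}^{\alpha,k_e}$, bounded in $\ell^q_\tau W^{\sigma,r}_x$ by the induction hypothesis, while the remaining factors are again treated by \eqref{eq_weighted_Sobolev}, and one concludes as before.

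\textbf{Main obstacle.} The hard part is the combinatorial bookkeeping of the tree structure: one must verify that every spatial derivative produced by rules \textbf{(A)}--\textbf{(C)}, by the fractional Leibniz rule and by the $J$-Leibniz identity falls on a factor $\mathcal{U}_{n_e}^{\alpha,k_e}$ with $n_e<n$ for which the inductive bound is available, and whose required regularity never exceeds that of $\varphi\in\Sigma\cap H^{2N}$ — this is precisely where the identity $2m_n^k+k\le N-n-2$ and the nested-error condition \eqref{ConditionNestedErrors} are used. It is also where the dimensional restriction enters (only the cubic case, with $N\le3$, when $d=3$, cf. Remark \ref{remark_NTS_d_3}): in higher dimension rule \textbf{(C)} would generate interactions of degree $>p$ carrying too many derivatives for the weighted Sobolev inequality \eqref{eq_weighted_Sobolev} to close the estimate.
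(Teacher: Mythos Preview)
Your proof is correct and follows essentially the same approach as the paper's own argument: induction on $n$, initialization via the discrete homogeneous Strichartz estimate (with the projector-composition trick $\Pi_\tau=\Pi_\tau\Pi_{\tau/4}$ for the $J$-case), and for $n\ge1$ the inhomogeneous discrete Strichartz estimate followed by H\"older, the fractional Leibniz rule and the weighted Sobolev inequality \eqref{eq_weighted_Sobolev}, exactly as in Propositions \ref{prop_unif_space_time_bounds_Un} and \ref{prop_discrete_space_continuous_Strichartz}. Your write-up is in fact more explicit than the paper's (you spell out the $t_\alpha\le1$/$t_\alpha>1$ splitting and the leaf-counting combinatorics), and your identification of the dimensional obstruction in $d=3$ matches the paper's Remark \ref{remark_NTS_d_3}.
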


\medskip

\begin{proof}
The proof is once again made by induction on $n$. For $n=0$, we directly get that
\[  \| J(t_j) \mathcal{U}_0^{j,k} \|_{\ell^q_{\tau} W^{\sigma,r}_x} = \| J(t_j) S_{\tau}(t_j) \nabla^k \varphi \|_{\ell^q_{\tau} W^{\sigma,r}_x} \lesssim \| \nabla^k \varphi \|_{H^{\sigma}_x} + \| x \nabla^k \varphi \|_{H^{\sigma}_x}  \]
using commmutators estimates from Lemma \ref{lemma_commutator_J_projector} and discrete Strichartz estimates from Lemma \ref{lemma_discrete_Strichartz}. For $n \geq 1$, we first write that
\[ \| J(t_j) \mathcal{U}_n^{j,k} \|_{\ell^q_{\tau} W^{\sigma,r}_x} \leq \sum_{\beta=0}^{m_n^k} \sum_{a \in \mathcal{T}_n^{\beta,k}} \tau^{\beta} \| \tau J(t_j) \sum_{\alpha=0}^{j-1} S_{\tau}(t_j-t_{\alpha} a_{\alpha}^\cut \|_{\ell^q_{\tau} W^{\sigma,r}_x}. \]
Since $a_k^\cut$ is a product of discretized terms $\mathcal{U}_{n'}^{\alpha,k'}$ with $n'<n$ and $k'\leq k$, we can invoke the induction hypothesis together with standard combinations of dispersive estimates, Hölder inequality and weighted Sobolev inequality. Note than even when rule \textbf{(C)} generates additional products of such terms, these can still be treated using the weighted Sobolev inequality  \ref{eq_weighted_Sobolev}, as in the proof of Proposition \ref{prop_unif_space_time_bounds_Un} (except in the three-dimensional case $d=3$ where this strategy can no longer apply, see Remark \ref{remark_NTS_d_3}).
\end{proof}

\section{Convergence of NQS} \label{sec:convergence_NQS}

The goal of this section is to prove Theorem \ref{theorem_convergence_NQS}, which establishes the convergence of our first numerical scheme \eqref{NQS}, now that we have all the necessary tools from Section \ref{section_dispersive_estimates} at our disposal. We begin by writing
\begin{equation} \label{eq_global_error_term_NQS}
\sup_{0 \leq j \leq J} \left\| u(t_j) - \sum_{n=0}^{N-1} \eps^n \mathfrak{U}_n^j \right\|_{L^2_x} \leq \left\| u(t_j) - \sum_{n=0}^{N-1} \eps^n U_n(t_j) \right\|_{\ell^{\infty}_{\tau} L^2_x} + \sum_{n=0}^{N-1} \eps^n \big\| U_n(t_j) - \mathfrak{U}_n^j \big\|_{\ell^{\infty}_{\tau} L^2_x}
\end{equation}
with the direct bound
\begin{equation} 
\Big\| u(t_j) - \sum_{n=0}^{N-1} \eps^n U_n(t_j) \Big\|_{\ell^{\infty}_{\tau} L^2_x} \leq \Big\| u(t_j) - \sum_{n=0}^{N-1} \eps^n U_n(t_j) \Big\|_{L^{\infty}_t \Sigma} \lesssim \eps^N
\end{equation}
from \cite{Carles2009} for the first term. For the second term, we can write for each term of the sum that
\begin{align}
\big\| U_n(t_j) - \mathfrak{U}_n^j \big\|_{\ell^{\infty}_{\tau} L^2_x}&\le \big\| U_n(t_j) - U_n^{\tau}(t_j) \big\|_{\ell^{\infty}_{\tau} L^2_x} + \big\| U_n^{\tau}(t_j) - \mathfrak{U}_n^j \big\|_{\ell^{\infty}_{\tau} L^2_x}\\ 
&\lesssim \tau^N\|\varphi\|_{\Sigma \cap H^{2N}_x} + \big\| U_n^{\tau}(t_j) - \mathfrak{U}_n^j \big\|_{\ell^{\infty}_{\tau} L^2_x}
\end{align}
using Proposition \ref{prop:Unstarbound} with the admissible pair $(q,r)=(\infty,2)$. To complete the proof, we now have to prove the following estimate
\begin{equation} \label{eq_induction_hypothesis_error_bound_Un_NQS}
\| U_n^{\tau}(t_j) - \mathfrak{U}_n^{j} \|_{\ell^q_{\tau} L^r_x} \leq C \tau^{N-n}
\end{equation}
for any admissible pairs $(q,r)$ and $0\le n\le 3$. While it would a priori be sufficient to consider only the admissible pair $(q,r)=(\infty,2)$, establishing stronger bounds for all admissible pairs is necessary to ensure the propagation of regularity. Moreover, this semi-discrete bound motivates the introduction of the auxiliary sequence $U_n^{\tau}$, as the continuous iterates $U_n(t_j)$ cannot be directly compared with the numerical scheme $\mathfrak{U}_n^{j}$ due to the lack of discrete-time dispersive estimates for~$U_n(t_j)$.

We prove the bounds \eqref{eq_induction_hypothesis_error_bound_Un_NQS} by induction on $n \geq 0$, focusing on the fourth-order case $N=4$ as previously announced, the lower-order cases $1 \leq N \leq 3$ being proven in a similar and easier way. For $n=0$, we simply remark by definition of \eqref{NQS} that
\[ U_0^{\tau}(t_\alpha)=S_{\tau}(t_\alpha)\varphi=\mathfrak{U}_0^{\alpha} \]
for all $\alpha \in \frac{\tau}{2}\IZ$, hence we directly get that
\[ \| U_0^{\tau}(t_j) - \mathfrak{U}_0^{j} \|_{\ell^q_{\tau} L^r_x} = 0 \]
which proves the initialization. We are now going to treat the cases $n\geq 1$, recalling that by definition of \eqref{NQS} we can write introducing the notation $\mathfrak{m}_n\coloneqq 3-n $ that
\[ \mathfrak{U}_n^j = - i \tau \sum_{\alpha=0}^{j-1} \sum_{\beta=0}^{\mathfrak{m}_n} S_{\tau}(t_j - t_{\alpha,\beta}^{(\mathfrak{m}_n)} ) \left(  \omega_{\beta}^{(\mathfrak{m}_n)} \mathfrak{F}_n^{\alpha+\frac{\beta}{\mathfrak{m}_n}} \right)  \]
with the conventions $t_{\alpha,0}^{(\mathfrak{m}_n)}=t_{\alpha}$, $\omega_{0}^{(0)}=1$ and $\frac{\beta}{\mathfrak{m}_n} = 0$ if $\mathfrak{m}_n=\beta=0$ (so for $n=3$). We can then write the global error
\begin{align}
U_n^{\tau}(t_j) - \mathfrak{U}_n^j & =   -i \int_0^{t_j} S_{\tau}(t_j-s) \ F_n^{\tau}(s)  \drm s +i \tau \sum_{\alpha=0}^{j-1} \sum_{\beta=0}^{\mathfrak{m}_n}  \omega_\beta^{(\mathfrak{m}_n)} S_{\tau}(t_j-t_{\alpha,\beta}^{(\mathfrak{m}_n)} ) \big(  F_n^{\tau}(t_{\alpha,\beta}^{(\mathfrak{m}_n)}) \big) \\
& \quad -i \tau \sum_{\alpha=0}^{j-1} \sum_{\beta=0}^{\mathfrak{m}_n}  S_{\tau}(t_j-t_{\alpha,\beta}^{(\mathfrak{m}_n)} ) \left( \omega_\beta^{(\mathfrak{m}_n)} \left( F_n^{\tau}(t_{\alpha,\beta}^{(\mathfrak{m}_n)}) - \mathfrak{F}_n^{\alpha+\frac{\beta}{\mathfrak{m}_n}} \right) \right)  \\
& \eqqcolon -i \mathfrak{B}_n^{j} - i \mathfrak{C}_n^{j}
\end{align}
coming respectively from the discretization of the time integral and the propagation of errors. For the first error term $B_n^{j}$, which corresponds to the high order Newton-Cotes error term, we write using Peano's error representation (see for instance \cite[Section 3.2]{StoerBulirsch2002}) associated to the Newton-Cotes formula that
\[  \mathfrak{B}_n^j= \frac{\tau}{2^{\mathfrak{m}_n}!} \sum_{\alpha=0}^{j-1} \int_{t_{\alpha}}^{t_{\alpha+1}} K_{2^{\mathfrak{m}_n}}^{\alpha}(s) \partial_s^{2^{\mathfrak{m}_n}} \left(  S_{\tau}(t_j-s) F^{\tau}_n(s) \right) \drm s,   \]
where $K_{2^{\mathfrak{m}_n}}^{\alpha}$ is the Peano kernel associated with the Newton-Cotes quadrature of order $2^{m_n}$ on the interval $\left[ t_{\alpha}, t_{\alpha+1} \right[$, which stands as a regular function satisfying $|K_{2^{\mathfrak{m}_n}}^{\alpha}(s)| \lesssim \tau^{2^{\mathfrak{m}_n}}$ uniformly in $\alpha$ for all $s \in \left[ t_{\alpha}, t_{\alpha+1} \right[$. For instance for the trapezoidal rule and Simpson's rule we respectively have
\[ K_{2}^{\alpha}(s)= \frac{(t_{\alpha+1}-s)(t_{\alpha}-s)}{2} \quad \text{and} \quad K_{4}^{\alpha}= \frac{(t_{\alpha+1}-s)^4}{4} - \frac{\tau}{6} \left(  4 \left( \frac{t_{\alpha}+ t_{\alpha+1}}{2} - s \right)^3_+ + (t_{\alpha+1}-s)^3 \right).  \]
We then piece-wisely define for $0 \leq s \leq t_j$ the function
\[K_{2^{\mathfrak{m}_n}}(s) \coloneqq \sum_{\alpha=0}^{j-1} K_{2^{\mathfrak{m}_n}}^{\alpha}(s) \mathds{1}_{\left[ t_{\alpha}, t_{\alpha+1} \right[}(s), \]
which still satisfies the bound $|K_{2^{\mathfrak{m}_n}}(s)| \lesssim \tau^{2^{\mathfrak{m}_n}}$ for all $0 \leq s \leq t_j$, uniformly in $T$. By direct differentiation we now remark that
\[  \partial_s( S_{\tau} (t_j - s) F_n^{\tau}(s)) = -i S_{\tau} (t_j - s) \Delta F_n^{\tau}(s) + S_{\tau} (t_j - s) \partial_s F_n^{\tau}(s),   \]
and $\partial_s F_n$ can be expressed as a combination of sum and product of $U_k^{\tau}$ for $0 \leq k \leq n-1$ as well as their derivatives in space using equation \eqref{eq_Un}. From this remark we define $\Lambda_n$ such that
\[ \partial_s^{2^{\mathfrak{m}_n}+1} ( S_{\tau}(t_j-s) \ F_n^{\tau}(s))  \eqqcolon S_{\tau} (t_j - s) \Lambda_n(s).   \]
We then get from Lemma \ref{lemma_discrete_continuous_Strichartz} that
\begin{align*}
 \| \mathfrak{B}_n^j \|_{\ell_{\tau}^q L^r_x} & = \left\| \int_0^{t_j} S_{\tau}(t_j-s) \left( K_{2^{m_n}}(s) \Lambda_n(s) \right) \drm s  \right\|_{\ell_{\tau}^q L^r_x}  \lesssim \|  K_{2^{m_n}} \Lambda_n \|_{L^{q'}_t L^{r'}_x} \\
 & \lesssim \tau^{2^{m_n}} \| \Lambda_n \|_{L^{q'}_t L^{r'}_x}
 \end{align*}
uniformly in $T$, which gives the result as $\tau^{2^{\mathfrak{m}_n}} \leq \tau^{N-n}$ from our choice of $\mathfrak{m}_n$. In order to estimate products of $U_k$ or their spatial derivatives when applying Hölder's inequality to control $\|\Lambda_n \|_{L^{q'}_t L^{r'}_x}$, we rely on Proposition~\ref{prop_unif_space_time_bounds_Un^tau}, which provides the following bounds 
\[ \| U_k^{\tau} \|_{L^{\infty}_t W^{\sigma,\infty}_x} \lesssim \| U_k^{\tau} \|_{L^{\infty}_t H^{\sigma+2}_x} \leq C   \]
for a generic constant $C>0$ uniform in time $T$, thanks to usual Sobolev embeddings as $d\leq 3$ and for any $\sigma \geq 0$.

\smallskip

We now turn to the analysis of the second error term $\mathfrak{C}_n^{j}$, namely the inductive propagation of error. For $n=1$ we simply remark that
\[  F_1^{\tau}(t_{\alpha})-\mathfrak{F}_1^{\alpha} = |U_0^{\tau}(t_{\alpha})|^{p-1}U_0^{\tau}(t_{\alpha}) - |\mathfrak{U}_0^{\alpha}|^{p-1}\mathfrak{U}_0^{\alpha}  = 0 \]
for all $\alpha \in \frac{\tau}{2}\IZ$, so $\mathfrak{C}_1^{j}=0$. For $n=2$, we can rewrite that
\[ \mathfrak{C}_2^{j}= \frac{\tau}{2} \sum_{\alpha}^{j-1} S_{\tau}(t_j-t_{\alpha})\left( F_{2}^{\tau}(t_{\alpha})-\mathfrak{F}_2^{\alpha}  \right) + \frac{\tau}{2} \sum_{\alpha}^{j-1} S_{\tau}(t_j-t_{\alpha+1})\left( F_{2}^{\tau}(t_{\alpha+1})-\mathfrak{F}_2^{\alpha+1}  \right),  \]
so applying discrete Sitrchartz estimates from Lemma \ref{lemma_discrete_Strichartz} to both terms we are brought back to estimate
\begin{align*}
\| F_{2}^{\tau}(t_{\alpha})-\mathfrak{F}_2^{\alpha} \|_{\ell_{\tau}^{q_2'} L^{r_2'}_x} = &\ \frac{p+1}{2} \|  |U_0^{\tau}(t_{\alpha})|^{p-1} (U_1^{\tau}(t_{\alpha})-\mathfrak{U}_1^{\alpha})\|_{\ell_{\tau}^{q_2'} L^{r_2'}_x} \\
& + \frac{p-1}{2} \|  |U_0^{\tau}(t_{\alpha})|^{p-3} (U_0^{\tau}(t_{\alpha}))^2 (\overline{U_1^{\tau}(t_{\alpha})}-\overline{\mathfrak{U}_1^{\alpha}})\|_{\ell_{\tau}^{q_2'} L^{r_2'}_x}  
\end{align*} 
as $U_0^{\tau}(t_\alpha)=\mathfrak{U}_0^{\alpha}$, thus by Hölder inequality we infer
\[ \| F_{2}^{\tau}(t_{\alpha})-\mathfrak{F}_2^{\alpha} \|_{\ell_{\tau}^{q_2'} L^{r_2'}_x} \lesssim \| U_0^{\tau}(t_{\alpha}) \|_{\ell^{\gamma}_{\tau} L^r_x}^{p-1} \| U_1^{\tau}(t_{\alpha})-\mathfrak{U}_1^{\alpha} \|_{\ell_{\tau}^q L^r_x}, \]
with $(q,r)$ admissible and $\gamma$ defined as in the proof of Proposition \ref{prop_unif_space_time_bounds_Un}. One can then conclude by estimating the $U_0^{\tau}(t_{\alpha})$ terms thanks to the weighted Sobolev inequality \eqref{eq_weighted_Sobolev} (as in the proof of Proposition \ref{prop_unif_space_time_bounds_Un}) alongside Proposition \ref{prop_unif_space_time_bounds_Un^tau}, and the $U_1^{\tau}(t_{\alpha})-\mathfrak{U}_1^{\alpha}$ by induction hypothesis. The case~$n=3$ is handled similarly, as
\begin{multline*}
\| \mathfrak{C}_3^{j} \|_{\ell_{\tau}^q L^r_x} = \left\| \tau \sum_{\alpha=0}^{j-1} S_{\tau}(t_j-t_{\alpha} ) \left(F_3^{\tau}(t_{\alpha})-\mathfrak{F}_3^{\alpha,\beta} \right) \right\|_{\ell^q_{\tau} L^r_x} \\
\lesssim \sum_{n_1+\ldots+n_p=n-1}\sum_{K=1}^p \left( \prod_{l=1}^{K-1} \|U_{n_l}^{\tau}(t_{\alpha}) \|_{\ell^{\gamma}_{\tau} L^{r}_x} \right) \| U_{n_K}^{\tau}(t_{\alpha}) - \mathfrak{U}_{n_K}^{\alpha} \|_{\ell^{q}_{\tau} L^{r}_x} \left( \prod_{l'=K+1}^{p} \|\mathfrak{U}_{n_l'}^{\alpha} \|_{\ell^{\gamma}_{\tau} L^{r}_x} \right) 
\end{multline*}
by discrete Strichartz estimates from Lemma \ref{lemma_discrete_Strichartz} and Hölder inequality. The terms 
$U_{n_l}^{\tau}(t_{\alpha})$ and~$\mathfrak{U}_{n_l'}^{\alpha}$ are then estimated through the weighted Sobolev inequality \eqref{eq_weighted_Sobolev} alongside Proposition \ref{prop_unif_space_time_bounds_Un^tau} and Proposition \ref{prop_unif_discrete_space_time_bounds_NQS}, while the terms $U_{n_K}^{\tau}(t_{\alpha}) - \mathfrak{U}_{n_K}^{\alpha}$ provide the needed accuracy in $\mathcal{O}(\tau)$ uniformly in time thanks to the induction argument, which ends the proof.

\section{Convergence of NTS} \label{sec:convergence_NTS}

The content of this Section is the proof of Theorem \ref{theorem_convergence_NTS}. First, as for \eqref{NQS} we write
\begin{equation} \label{eq_global_error_term_NQS}
\sup_{0 \leq j \leq J} \left\| u(t_j) - \sum_{n=0}^{N-1} \eps^n \mathcal{U}_n^j \right\|_{L^2_x} \lesssim \eps^N + \tau^{N} + \sum_{n=0}^{N-1} \eps^n \big\| U_n^{\tau}(t_j) - \mathcal{U}_n^j \big\|_{L^2_x}
\end{equation}
and we now prove by induction on $n\ge0$ the bound
\begin{equation} \label{eq_induction_hypothesis_error_bound_Un_NQS}
\mathcal{E}_n^{j,k}\coloneqq \|\nabla^kU_n^*(t_j) - U_n^{j,k} \|_{\ell^q_{\tau} L^r_x}\leq C \tau^{N-n-1-\lfloor\frac{k}{2}\rfloor}
\end{equation}
for any admissible pairs $(q,r)$, $0\le n\le N-1$, $0\le k\le2(N-n-2)$ and $0\le j\le J$, see condition~\eqref{ConditionNestedErrors}, which will eventually complete the proof. For $n=0$, we have
\begin{equation}
\nabla^k U_0^{\tau}(t_j)=S_{\tau}(t_j)\nabla^k\varphi
\end{equation}
for $0\le j\le J$ and $0\le k\le 2(N-2)$ thus $\mathcal{E}_0^{j,k}=0$. For $n\ge1$, we have
\begin{align*}
\nabla^kU_n^{\tau}(t_j)&=-i\int_0^{t_j}S_\tau(t_j-s)\nabla^kF_n^{\tau}(s)\drm s\\
&=-i\sum_{\alpha=0}^{j-1}\sum_{a\in\CT_n^{0,k}}c_k(a)\int_{t_\alpha}^{t_{\alpha+1}}S_\tau(t_j-s)a^\cut(s)\drm s
\end{align*}
where decorated trees are to be interpreted with truncated semigroup $S_\tau$, although we omit this dependence in the notation for conciseness. For the numerical scheme, we have
\begin{equation}
\mathcal{U}_n^{j,k}\coloneqq-i\sum_{\alpha=0}^{j-1}\sum_{\beta=0}^{m_n^k}\sum_{a\in\CT_n^{\beta,k}}c_k(a)\frac{\tau^{\beta+1}}{(\beta+1)!}S_\tau(t_j-t_\alpha)a_\alpha^\cut
\end{equation}
for $0\le j\le J$ and $0\le k\le 2(N-n-2)$ thus we split the error in two terms with
\begin{align}
\mathcal{E}_n^{k,j} & =-i\sum_{\alpha=0}^{j-1}\sum_{a\in\CT_n^{0,k}}c_k(a)\Big(\int_{t_\alpha}^{t_{\alpha+1}}S_\tau(t_j-s)a^\cut(s)\drm s-\sum_{\beta=0}^{m_n^k}\frac{\tau^\beta}{\beta!}\partial_s^\beta\big(S_\tau(t_j-s)a^\cut(s)\big)(t_\alpha)\Big)\\ 
&\quad-i\sum_{\alpha=0}^{j-1}\sum_{\beta=0}^{m_n^k}\sum_{a\in\CT_n^{\beta,k}}c_k(a)\frac{\tau^\beta}{\beta!}S_\tau(t_j-t_\alpha)\big(a^\cut(t_\alpha)-a_\alpha^\cut\big)\\ 
&\eqqcolon\mathcal{B}_n^{k,j}+\mathcal{C}_n^{k,j}
\end{align}
using the tree representation of $\nabla^kF_n$ and its time derivatives. The two error terms comes respectively from the discretization of the time integral and the propagation of errors. For the first term, we have
\begin{align}
\mathcal{B}_n^{k,j}&=-i\sum_{\alpha=0}^{j-1}\sum_{a\in\CT_n^{m_n^k+1,k}}c_k(a)\int_{t_\alpha}^{t_{\alpha+1}}\int_{t_\alpha}^{s_1}\frac{(s_1-s_2)^{m_n^k}}{m_n^k!}S_\tau(t_j-s_2)a^t(s_2)\drm s_2\drm s_1\\
&=-i\sum_{a\in\CT_n^{m_n^k+1,k}} \sum_{\alpha=0}^{j-1} c_k(a)\int_{t_\alpha}^{t_{\alpha+1}}S_\tau(t_j-s_2)a^t(s_2)\Big(\int_{s_2}^{t_{\alpha+1}}\frac{(s_1-s_2)^{m_n^k}}{m_n^k!}\drm s_1\Big)\drm s_2\\
&=-i\sum_{a\in\CT_n^{m_n^k+1,k}}c_k(a)\int_{0}^{t_j}S_\tau(t_j-s_2)a^t(s_2)\frac{(t_{\alpha+1}-s_2)^{m_n^k+1}}{(m_n^k+1)!}\drm s_2
\end{align}
using Taylor expansion with explicit remainder. Using discrete Strichartz estimates from Lemma~\ref{lemma_discrete_continuous_Strichartz}, we get
\begin{equation}
\|\mathcal{B}_n^{k,j}\|_{\ell_\tau^qL_x^r}\lesssim\tau^{N-n-1-\lfloor\frac{k}{2}\rfloor}\sum_{a\in\CT_n^{m_n^k+1,k}}\|a^t\|_{L_t^{q_2'}L_x^{r_2'}}
\end{equation}
for any admissible pair $(q_2,r_2)$ since $m_n^k=N-n-\lfloor\frac{k}{2}\rfloor-2$. For any $a\in\CT_n^{m_n^k+1,k}$, there exists decorated trees $b_1,\ldots,b_m$ with $m=p+q(p-1)$ and an integer $q\ge0$ such that
\begin{equation}
a^\cut=\nabla^{k_1}b_1\ldots\nabla^{k_m} b_m
\end{equation}
where $b_e\in\{a_e,\overline{a_e}\}$ with $a_e\in\CT_{n_e}$ with $n_1+\ldots+n_m=n$ and $2(\beta-q)+k=k_1+\ldots+k_m$, which corresponds to the representation \eqref{eq:arbrecut}. We then get
\[\|a^\cut\|_{L_t^{q_2'}L_x^{r_2'}}=\|\nabla^{k_1}b_1\ldots\nabla^{k_m}b_m\|_{L_t^{q_2'}L_x^{r_2'}} 
\lesssim \|\nabla^{k_1}b_1\|_{L_t^{q}L_x^{r}} \prod_{e=2}^m\|\nabla^{k_e}b_e\|_{L_t^{\gamma}L_x^{r}} \]
using Hölder inequality, with $\gamma$ and $r$ defined as in the proof of Proposition \ref{prop_unif_space_time_bounds_Un}. While the first term is bounded by Proposition \ref{prop_unif_space_time_bounds_Un^tau}, for the other terms we write that
\[  \|\nabla^{k_e}b_e\|_{L_x^{r}} \lesssim \frac{1}{|t|^{\delta}}  \|\nabla^{k_e}b_e\|_{L_x^{2}}  \|J(t)\nabla^{k_e}b_e\|_{L_x^{2}} \]
using the weighted Sobolev inequality \eqref{eq_weighted_Sobolev} with $\delta>0$ as in the proof of Proposition \ref{prop_unif_space_time_bounds_Un}. As~$\gamma \delta>1$, one can integrate in time, and by induction we are left by estimating thanks to commutators identities that
\[ \|J(t)\nabla^{k_e}b_e\|_{L^{\infty}_t L_x^{2}} \lesssim \|\nabla^{k_e} J(t)b_e\|_{L^{\infty}_t L_x^{2}} + \| \nabla^{k_e}b_e\|_{L^{\infty}_t L_x^{2}}, \]
which are bounded by Proposition \ref{prop_unif_space_time_bounds_Un^tau}. For the second error term, write again $a\in\CT_n^{\beta,k}$ as
\begin{equation}
a^\cut=\nabla^{k_1}b_1\ldots\nabla^{k_m}b_m
\end{equation}
where $b_i\in\{a_i,\overline{a_i}\}$ with $a_i\in\CT_{n_i}$ and $n_1+\ldots+n_m=n$ and $2(\beta-q)+k=k_1+\ldots+k_m$. We get
\begin{multline*}
\frac{\tau^\beta}{\beta!}S_\tau(t_j-t_\alpha)\big(a^\cut(t_\alpha)-a_\alpha^\cut\big) \\
=\frac{\tau^\beta}{\beta!}S_\tau(t_j-t_\alpha)\Big(\prod_{e=1}^m\nabla^{k_e}b_e(t_\alpha)-\prod_{e=1}^m\nabla^{k_e}(b_e)_\alpha\Big)\\
=\frac{\tau^\beta}{\beta!}S_\tau(t_j-t_\alpha)\sum_{e=1}^m\big(\nabla^{k_e}b_e(t_\alpha)-(\nabla^{k_e}b_e)_\alpha\big)\prod_{e'<e}\nabla^{k_{e'}}b_{e'}(t_\alpha)\prod_{e'>e}(\nabla^{k_{e'}}b_{e'})_\alpha
\end{multline*}
hence
\begin{multline*}
\|\sum_{\alpha=0}^j\frac{\tau^\beta}{\beta!}S_\tau(t_j-t_\alpha)\big(a^\cut(t_\alpha)-a_\alpha^\cut\big)\|_{\ell_\tau^qL_x^r}\\
\lesssim\tau^\beta\sum_{e=1}^m\|\big(\nabla^{k_e}b_e(t_\alpha)-(\nabla^{k_e}b_e)_\alpha\big)\prod_{e'<e}\nabla^{k_{e'}}b_{e'}(t_\alpha)\prod_{e'>e}(\nabla^{k_{e'}}b_{e'})_\alpha\|_{\ell_\tau^{q_2'}L_x^{r_2'}}\\ 
\lesssim\tau^\beta\sum_{e=1}^m\|\nabla^{k_e}b_e(t_\alpha)-(\nabla^{k_e}b_e)_\alpha\|_{\ell_\tau^{q}L_x^{r}}\prod_{e'<e}\|\nabla^{k_{e'}}b_{e'}(t_\alpha)\|_{\ell_\tau^{\gamma}L_x^{r}}\prod_{e'>e}\|(\nabla^{k_{e'}}b_{e'})_\alpha\|_{\ell_\tau^{\gamma}L_x^{r}}
\end{multline*}
for any admissible pairs $(q_2,r_2)$ using discrete Strichartz estimates from Lemma \ref{lemma_discrete_Strichartz}. We handle the first term by induction, while other terms are treated as before (with the use of discrete dispersive bounds from Proposition \ref{prop_unif_discrete_space_time_bounds_NTS} for the terms with $e'>e$). We finally get that
\begin{equation}
\|\sum_{\alpha=0}^j\frac{\tau^\beta}{\beta!}S_\tau(t_j-t_\alpha)\big(a^\cut(t_\alpha)-a_\alpha^\cut\big)\|_{\ell_\tau^qL_x^r}\lesssim\tau^\beta\sum_{e=1}^m\tau^{N-n_e-1-\lfloor\frac{k_e}{2}\rfloor}
\end{equation}
by induction hypotheses since $n_e<n$. As $q\ge0$, we have $k_e\le 2\beta+k$ thus
\begin{equation}
\|\sum_{\alpha=0}^j\frac{\tau^\beta}{\beta!}S_\tau(t_j-t_\alpha)\big(a^\cut(t_\alpha)-a_\alpha^\cut\big)\|_{\ell_\tau^qL_x^r}\lesssim\tau^{N-n-1-\lfloor\frac{k}{2}\rfloor}
\end{equation}
using also that $n_e<n$, which concludes the proof.

\section{Numerical experiments} \label{sec:numerical_experiments}

\subsection{The quintic case in dimension 1}

In this section, we illustrate Theorem \ref{theorem_convergence_NQS}, and we compare our newly developed multiscale approximation scheme with the classical Lie splitting from \cite{Carles2024}. In these numerical simulations, we use a standard Fourier pseudospectral method for space discretization with largest Fourier mode $K=2^9$. Our computations are carried out on a finite interval $\IT_a=\left[-\frac{\pi}{a},\frac{\pi}{a} \right[$ where~$a>0$ is a constant that will be specified along each numerical simulations. This constant is chosen to avoid finite-box size effects in our simulations, such as unwanted reflections or artificial transmissions due to periodic boundary conditions.

We first restrict our attention to the one-dimensional case $d=1$ (making simulations both easier and shorter to compute) for equation \eqref{NLS} in the quintic case $p=5$, which is covered by Theorem \ref{theorem_convergence_NQS}. We will take $N=3$, which guarantees a convergence in $\mathcal{O}(\eps^3)$ of our multiscale scheme towards the true solution as the parameter $\eps$ tends to 0 under the CFL condition $\tau \leq \eps$. With these parameters, \eqref{NQS} writes as 
\[ \left|
\begin{aligned} 
& \  \mathfrak{U}_0^{j}= S_{\tau}(t_{j}) \varphi,   \\
& \  \mathfrak{U}_1^{j}= - i \frac{\tau}{2} \left( \sum_{\alpha=0}^{j-1} \sum_{\beta=0}^{1} S_{\tau} (t_j-t_{\alpha+\beta}) \left(|\mathfrak{U}_0^{\alpha+\beta} |^4 \mathfrak{U}_0^{\alpha+\beta}  \right)  \right),  \\
& \  \mathfrak{U}_2^{j}= - i \tau  \sum_{\alpha=0}^{j-1} S_{\tau} (t_j-t_\alpha) \left( 3|\mathfrak{U}_0^{\alpha} |^4 \mathfrak{U}_1^{\alpha} + 2|\mathfrak{U}_0^{\alpha} |^2 \left( \mathfrak{U}_0^{\alpha}\right)^2 \overline{\mathfrak{U}_1^{\alpha}}  \right)  ,
\end{aligned}
 \right. \]

where we recall that $t_j=j \tau$ for $0 \leq j \leq J$ with $T=J\tau$.

\begin{remark}
For computational efficiency in both memory and runtime, we rather compute~$\mathfrak{V}_2^j =S_{\tau}(-t_j) \mathfrak{U}_2^j$ using the recursive formula:
\[  \mathfrak{V}_2^{j+1}= \mathfrak{V}_2^{j} - i \tau S_{\tau} (-t_j) \left( 3|\mathfrak{U}_0^{j} |^4 \mathfrak{U}_1^{j} + 2|\mathfrak{U}_0^{j} |^2 \left( \mathfrak{U}_0^{j}\right)^2 \overline{\mathfrak{U}_1^{j}}  \right).  \]
We only revert to $\mathfrak{U}_2^j=S_{\tau}(t_j) \mathfrak{V}_2^j$ at specific times of interest.
\end{remark}

\subsubsection{Numerical accuracy and propagation of Gaussian data for quintic 1D NLS}

To the best of the authors' knowledge, no explicit formula exists for non-trivial solutions of equation~\eqref{NLS} in the defocusing case. This makes it challenging to obtain a reference solution for comparison with our scheme. We avoid these technicalities by comparing our numerical scheme directly to the solutions of the linear Schrödinger sub-problems
\[ \left|
\begin{aligned} 
& \ i \partial_t U_0 +\Delta U_0 =0, & \quad U_0(0)=\varphi, \\
& \ i \partial_t U_1 +\Delta U_1 =|U_0|^4 U_0, & \quad U_1(0)=0, \\
& \ i \partial_t U_2 +\Delta U_2 =3 |U_0|^4 U_1 + 2 |U_0|^2 U_0^2 \overline{U_1}, & \quad U_2(0)=0,
\end{aligned}
 \right. \]
 rather than to the general solution $u$ of equation \eqref{NLS}. Indeed, explicit formulas for $U_0$, $U_1$ and $U_2$ can be derived by taking the initial condition $\varphi$ as a Gaussian function. This computations rely on the well-known property that Gaussians functions remains Gaussians under the Schrödinger flow: for instance on $\IR$, for any $z\in \IC$ and $f(x)=e^{-z x^2}$, we have
\[ \left(e^{i t \Delta} f \right)(x)= \frac{1}{\sqrt{1+4izt}} e^{- \frac{z}{1+4izt}x^2}. \]
Let then $\varphi(x) =e^{-\frac{x^2}{2}}$ be our initial condition. Applying recursively the previous formula we then get that
\[  U_0(t,x)=\frac{1}{\sqrt{\lambda(t)}} e^{-\frac{1}{2 \lambda (t)} x^2} \]
with $\lambda(t)= 1+2it$,
\[ U_1(t,x) = -i \int_0^t \frac{1}{|\lambda(s)|^2\sqrt{\lambda(s)}} \frac{1}{\sqrt{1+4i(t-s)z(s) }} e^{\frac{z(s)}{1+4i(t-s)z(s)}x^2} \drm s \] 
with $z(s)=\frac{2}{1+4s^2}+\frac{1}{2 \lambda(s)}$, and 
\begin{align*}
U_2(t)  = & - 3 \int_0^t \frac{1}{|\lambda(s)|^2} \int_{0}^s \frac{1}{|\lambda(r)|^2\sqrt{\lambda(r)} \sqrt{\Theta(r,s)}} \frac{1}{\sqrt{1+4i(t-s) \zeta(r,s)}} e^{- \frac{\zeta(r,s)}{1+4i \zeta(r,s) (t-s)} x^2} \drm r \drm s \\
& +2 \int_0^t \frac{1}{|\lambda(s)|\lambda(s)} \int_{0}^s \frac{1}{|\lambda(r)|^2\sqrt{\overline{\lambda(r)}}\sqrt{\overline{\Theta(r,s)}}} \frac{1}{\sqrt{1+4i(t-s) \widetilde{\zeta(r,s)}(r,s)}} e^{- \frac{\widetilde{\zeta(r,s)}}{1+4i \widetilde{\zeta(r,s)} (t-s)} x^2} \drm r \drm s
\end{align*}  
with $\Theta(r,s)=1+4i(s-r)z(r)$ as well as 
\[ \zeta(r,s)= \frac{z(r)}{\Theta(r,s)} + \frac{2}{1+4s^2} \quad \text{and} \quad \widetilde{\zeta(r,s)} = \frac{\overline{z(r)}}{\overline{\Theta(r,s)}} + \frac{1}{1+4s^2} + \frac{1}{\lambda(s)}. \]

We first fix $T=1$ and $a=0.05$, and we compute $(U_n(T))_{0 \leq n \leq 2}$ using the above exact formulas discretized by a rectangle rule with a very precise stepsize $\tau_0 = 1.10^{-4}$ to get a reference solution. We then compute the numerical errors $e_n \coloneqq \|U_n(T) - \mathfrak{U}_n^{J} \|_{L^2(\IR)}$ for $n=0,1$ and $2$ as a function of the time step $\tau$ in the left part of Figure 1. As expected, we observe that $\mathfrak{U}_2^J$ provides a first-order approximation of $U_2$, while $\mathfrak{U}_1^J$ achieves a second-order approximation of  $U_1$. We also note that the error for $U_0$ is negligible, as it is computed using a direct explicit formula.

\begin{figure}[h]
	\centering
	\captionsetup{width=0.75\textwidth}
		\includegraphics[width=0.80\textwidth,trim = 15cm 10cm 15cm 10cm, clip]{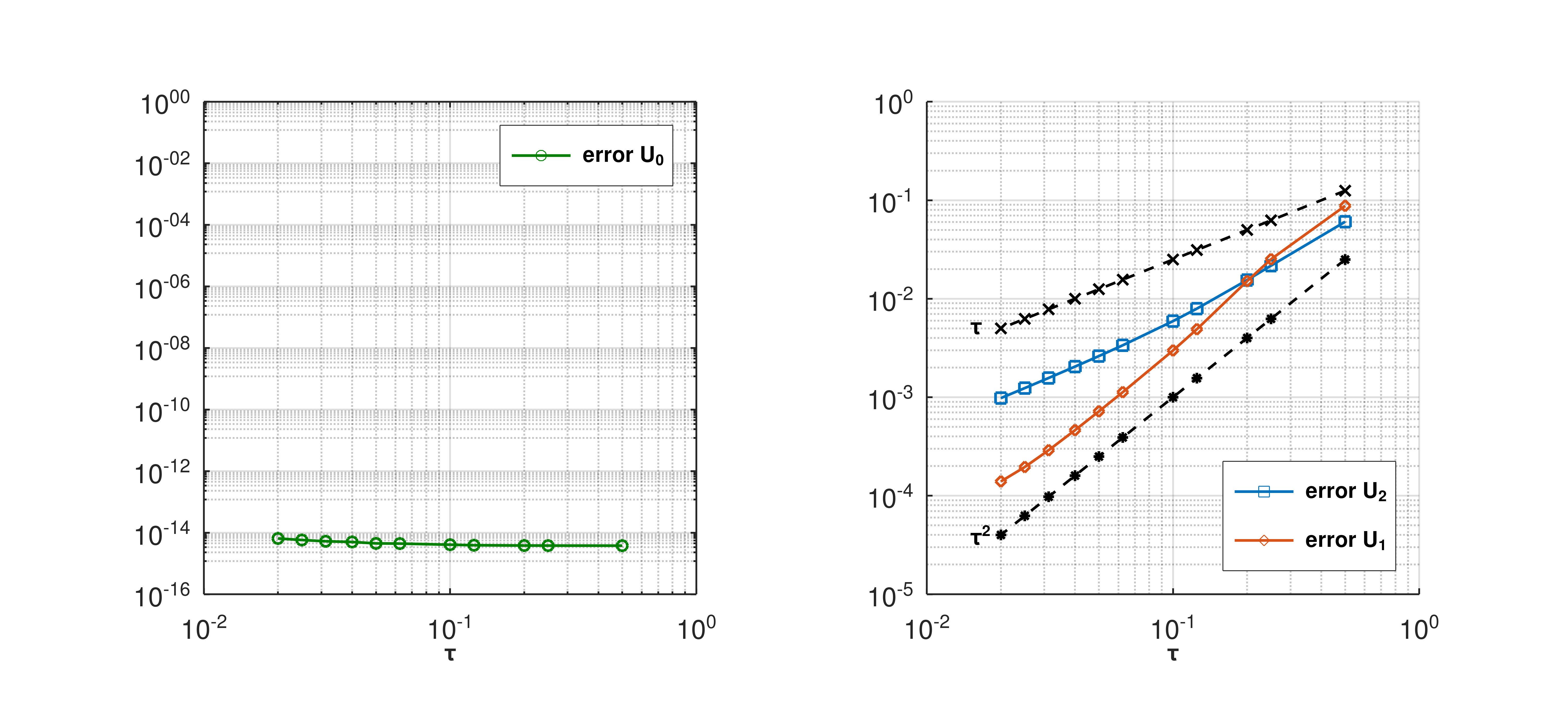}	
	\caption{Plot of the $L^2$-numerical errors of $\mathfrak{U}_0^J$ (left panel) and of $\mathfrak{U}_1^J$ and $\mathfrak{U}_2^J$ (right panel) as time step $\tau \rightarrow 0$, with fixed final time $T=1$.}
	\label{fig:convergence_Un.png}
\end{figure}

\subsubsection{Convergence in the weakly nonlinear regime for quintic 1D NLS and comparison}

We now compare the evolution of the errors in $L^2$-norm produced both by our multiscale scheme and by the Lie splitting scheme described in \cite{Carles2024}, as the weakly nonlinear parameter $\eps$ goes to 0. These errors are evaluated against a reference solution computed using a highly accurate Strang splitting approximation (with $\tau_1=1.10^{-4}$). All computations are performed with final time $T=1$ and space constant $a=0.05$.

The usual operator splitting methods for the time integration of \eqref{NLS} are based on the solutions of the subproblems
\[ \left|
\begin{aligned} 
& \ i  \partial_t v= - \Delta v, & \quad v(0)=v_0,  \\   
& \ i  \partial_t w(t,x)= \eps |w|^{p-1} w , & \quad w(0)=w_0,
\end{aligned}
 \right. \]
and the associated operators are then explicitly given, for $t \in \IR$, by
\[  v(t)=S(t) v_0=e^{it \Delta} v_0, \]
\[  w(t)=\Phi^t_{\mathcal{N}}(w_0)= e^{-i \eps t |w_0|^{p-1}} w_0. \]
The Lie splitting scheme from \cite{Carles2024}, with projected linear flow $S_{\tau}$ instead of $S$, is then given by the recursive formula
\[  u_{LS}^{j+1}= S_{\tau}(\tau) \circ \Phi^{\tau}_{\mathcal{N}} (u^{j}_{LS}), \quad u^0_{LS}=\varphi,  \]
while the second-order Strang splitting writes as 
\[  u^{j+1}_{ST}= S_{\tau} \left(\frac{\tau}{2} \right) \circ \Phi^{\tau}_{\mathcal{N}} \circ S_{\tau}\left(\frac{\tau}{2} \right) (u^{j}_{ST}), \quad u^0_{ST}=\varphi.  \]

In Figure \ref{fig:convergence_order_3.png}, we compute the errors 
\[E_{LS}(\eps)=\| u_{ST}^J-u^{J}_{LS} \|_{L^2_x} \quad \text{and} \quad E_{NQS}(\eps)=\| u_{ST}^J-\mathfrak{U}_0^J - \eps \mathfrak{U}_1^J - \eps^2 \mathfrak{U}_2^J \|_{L^2_x}, \]
 evaluated as $\eps$ varies from $1$ to $1.10^{-4}$. In the left panel, the time step $\tau=0.01$ is used, while in the right panel, a smaller time step $\tau=0.001$ is employed.
 
\begin{figure}[h]
	\centering
	\captionsetup{width=0.75\textwidth}
		\includegraphics[width=0.80\textwidth,trim = 15cm 10cm 15cm 10cm, clip]{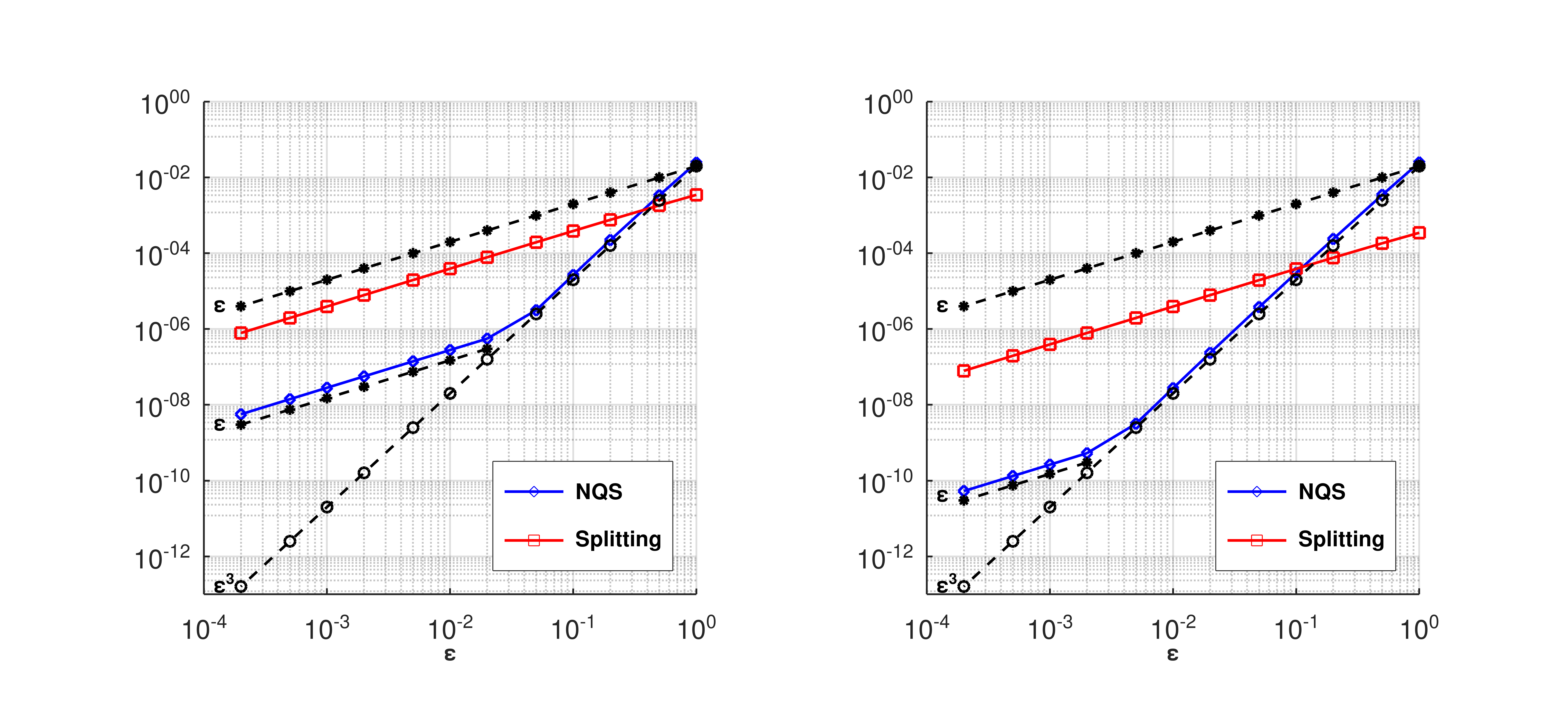}	
	\caption{Log-log plot of the convergence of the $L^2$-numerical errors for the splitting scheme $E_{LS}(\eps)$ and the multiscale scheme $E_{NQS}(\eps)$ as $\eps \rightarrow 0$, with fixed final time $T=1$ and time step $\tau=0.01$ (left pannel) or $\tau=0.001$ (right pannel).}
	\label{fig:convergence_order_3.png}
\end{figure}

As anticipated, our multiscale scheme exhibits third-order accuracy for $\eps=1$ to $\eps \simeq 2 \tau$. As $\eps$ approaches $\tau$,  the scheme gradually transitions to first-order accuracy, effectively illustrating the CFL condition $\tau \leq \eps$ and the results of Theorem \ref{theorem_convergence_NQS}. We also observe that our multiscale scheme surpasses the accuracy of the splitting scheme (which is first-order in $\eps$) only when $\eps \leq 0.1$.  This aligns with the fact that our analysis is specifically designed for the weakly nonlinear regime $\eps \rightarrow 0$. 

\subsection{The cubic case in dimension 2}

We now turn to a numerical illustration of Theorem \ref{theorem_convergence_NTS}, implementing \eqref{NTS} for the cubic \eqref{NLS} equation $p=3$ on the two dimensional setting $d=2$. As previously, all computations are performed using Fourier pseudospectral method for space discretization with $K=2^8$, on a finite interval $\IT_a^2=\left[-\frac{\pi}{a},\frac{\pi}{a} \right[^2$ where~$a>0$ is a constant chosen to avoid finite-box size effects that will be specified along numerical tests. 

\smallskip

We will now take a fourth order $N=4$, so that our scheme is expected to have a $\mathcal{O}(\eps^4)$ convergence towards the true solution as the parameter $\eps \rightarrow 0$ under the CFL condition $\tau \leq \eps$. We recall that $t_j=j \tau$ for $0 \leq j \leq J$ with $T=J\tau$, and with these parameters, our scheme recursively writes as 

\[ \left|
\begin{aligned} 
& \ \mathcal{U}_0^{j+1} = S_{\tau}(\tau)  \mathcal{U}_0^j, & \quad \mathcal{U}_0^0=\varphi, \\
& \ \mathcal{U}_0^{1,j+1} = S_{\tau}(\tau)   \mathcal{U}_0^{1,j}, & \quad  \mathcal{U}_0^{1,0}=\nabla \varphi, \\
& \ \mathcal{U}_0^{2,j+1} = S_{\tau}(\tau)   \mathcal{U}_0^{2,j}, & \quad  \mathcal{U}_0^{2,0}=\Delta \varphi, \\
& \ \mathcal{U}_0^{3,j+1} = S_{\tau}(\tau)   \mathcal{U}_0^{3,j}, & \quad  \mathcal{U}_0^{3,0}=\nabla \Delta \varphi, \\
& \ \mathcal{U}_0^{4,j+1} = S_{\tau}(\tau)   \mathcal{U}_0^{4,j}, & \quad  \mathcal{U}_0^{4,0}=\Delta^2 \varphi, \\
& \ \mathcal{V}_1^{j+1} = \mathcal{V}_1^j - i S_{\tau}(-t_j) \left[ \tau |\mathcal{U}_0^j|^2 \mathcal{U}_0^j - i \tau^2 \left( 2 |\mathcal{U}_0^{1,j}|^2 \mathcal{U}_0^j + (\mathcal{U}_0^{1,j})^2 \overline{\mathcal{U}_0^j} + (\mathcal{U}_0^j)^2 \overline{\mathcal{U}_0^{2,j}}\right. \right) & \\
& \quad \quad \quad \left. - 2\frac{\tau^3}{3} \left((\mathcal{U}_0^{2,j})^2 \overline{\mathcal{U}_0^j} + 4 \mathcal{U}_0^{2,j} |\mathcal{U}_0^{1,j}|^2 + 4 (\mathcal{U}_0^{1,j})^2 \overline{\mathcal{U}_0^{2,j}} + 2 |\mathcal{U}_0^{2,j}|^2 \mathcal{U}_0^j  \right. \right. &  \\
& \quad \quad \quad \left. \left. + \overline{\mathcal{U}_0^{4,j}} (\mathcal{U}_0^j)^2 + 4 \mathcal{U}_0^j \mathcal{U}_0{1,j}\cdot \overline{\mathcal{U}_0^{3,j}} \right)   \right] & \quad V_1^0=0, \\
& \ \mathcal{U}_1^{j+1} =S_{\tau}(t_{j+1}) \mathcal{V}_1^{j+1} & \quad \mathcal{U}_1^0=0, \\
& \mathcal{V}_1^{1,j+1} = \mathcal{V}_1^{1,j} - i S_{\tau}(-t_j) \left[ \tau \left( 2 |\mathcal{U}_0^j|^2 \mathcal{U}_0^{1,j} + (\mathcal{U}_0^j)^2 \overline{\mathcal{U}_0^{1,j}}  \right) \right] & \quad \mathcal{V}_1^{1,0}=0,\\
& \mathcal{U}_1^{1,j+1} = S_{\tau}(t_{j+1}) \mathcal{V}_1^{1,j+1} & \quad \mathcal{U}_1^{1,0}=0, \\
& \mathcal{V}_1^{2,j+1} = \mathcal{V}_1^{2,j} - i S_{\tau}(-t_j) \left[ \tau \left( 2 \mathcal{U}_0^{2,j} |\mathcal{U}_0^j|^2 + 4 |\mathcal{U}_0^{1,j}|^2 \mathcal{U}_0 + 2 (\mathcal{U}_0^{1,j})^2 \overline{\mathcal{U}_0^j}  + (\mathcal{U}_0^j)^2 \overline{\mathcal{U}_0^{2,j}} \right) \right] & \quad \mathcal{V}_1^{2,0}=0,\\
& \mathcal{U}_1^{2,j+1} = S_{\tau}(t_{j+1}) \mathcal{V}_1^{2,j+1} & \quad \mathcal{U}_1^{2,0}=0, \\
& \mathcal{V}_2^{j+1} = \mathcal{V}_2^j - i S_{\tau}(- t_j) \left[ \tau \left( 2 |\mathcal{U}_0^j|^2 \mathcal{U}_1^j + (\mathcal{U}_0^j)^2 \overline{\mathcal{U}_1^j} \right) - \frac{\tau^2}{2} \left(  |\mathcal{U}_0^j|^4 \mathcal{U}_0^j + 4 |\mathcal{U}_0^{1,j}|^2 \mathcal{U}_1^j \right. \right. & \\
& \quad \quad \quad  + 4 \overline{\mathcal{U}_0^{2,j}} \mathcal{U}_0^j \mathcal{U}_1^j + 4 \overline{\mathcal{U}_0^j} \mathcal{U}_0^{1,j} \cdot \mathcal{U}_1^{1,j} + 4 \mathcal{U}_0^j \overline{\mathcal{U}_0^{1,j}} \cdot \mathcal{U}_1^{1,j} + 2 \overline{\mathcal{U}_1^{2,j}} (\mathcal{U}_0^j)^2 \\
& \quad \quad \quad \left. \left. + 2 (\mathcal{U}_0^{1,j})^2 \cdot \overline{\mathcal{U}_1^{j}} + 4 \mathcal{U}_0^j \mathcal{U}_0^{1,j} \cdot \overline{\mathcal{U}_1^{1,j}} \right)   \right] & \quad \mathcal{V}_2^{0}=0,\\
& \mathcal{U}_2^{j+1}= S_{\tau}(t_{j+1}) \mathcal{V}_2^{j+1} & \quad \mathcal{U}_2^{0}=0, \\
& \mathcal{V}_3^{j+1} = \mathcal{V}_3^j - i \tau S_{\tau}(- t_j) \left( 2 |\mathcal{U}_0^j|^2 \mathcal{U}_2^j + (\mathcal{U}_0^j)^2 \overline{\mathcal{U}_2^j} + 2 |\mathcal{U}_1^j|^2 \mathcal{U}_0^j + (\mathcal{U}_1^j)^2 \overline{\mathcal{U}_0^j}  \right) & \quad \mathcal{V}_3^{0}=0,\\
& \mathcal{U}_3^{j+1} = S_{\tau}(t_{j+1}) \mathcal{V}_3^{j+1} & \quad \mathcal{U}_3^{0}=0. \\
\end{aligned}
 \right. \]
 
 \subsubsection{Numerical accuracy and propagation of Gaussian data for cubic 2D NLS}
 
 To check accuracy for approximation for $U_0$, $U_1$ and $U_2$, we use the propagation of 2D Gaussian data
  \[ \left(e^{i t \Delta} e^{-z |x|^2} \right)(x)= \frac{1}{1+4izt} e^{- \frac{z}{1+4izt} |x|^2} \quad \text{for} \ z \in \IR. \] 
For initial condition $\varphi(x)=e^{-|x|^2/2}$, this leads to 
  \[ U_0(t)=\frac{1}{\lambda(t)} e^{- \frac{1}{2 \lambda(t)} |x|^2}, \quad \nabla U_0(t) = - \frac{x}{\lambda(t)^2} e^{- \frac{1}{2 \lambda(t)} |x|^2}  \]
  and
  \[ \Delta U_0(t)= \frac{1}{\lambda(t)^2} \left( \frac{|x|^2}{\lambda(t)} - 2  \right) e^{- \frac{1}{2 \lambda(t)} |x|^2}  \]
  with $\lambda(t)=1+2it$. We also get
  \[ U_1(t)=  -i \int_0^t \frac{1}{|\lambda(s)|^2 \lambda(s)} \frac{1}{1+4i(t-s)\nu(s)} e^{-\frac{\nu(s)}{1+4i(t-s)\nu(s)} |x|^2} \drm s \]
  with $\nu(s)=\frac{1}{1+4s^2}+\frac{1}{2 \lambda(s)}$, as well as
   \[ \nabla U_1(t)=  2 i \int_0^t \frac{1}{|\lambda(s)|^2 \lambda(s)} \frac{\nu(s)}{(1+4i(t-s)\nu(s))^2} x e^{-\frac{\nu(s)}{1+4i(t-s)\nu(s)} |x|^2} \drm s \] 
and
\[ \Delta U_1(t)=  4 i \int_0^t \frac{1}{|\lambda(s)|^2 \lambda(s)} \frac{\nu(s)}{(1+4i(t-s)\nu(s))^2} \left( 1 - \frac{\nu(s) |x|^2}{1+4i(t-s)\nu(s)}  \right) e^{-\frac{\nu(s)}{1+4i(t-s)\nu(s)} |x|^2} \drm s. \]
Finally we compute
\begin{align*}
U_2(t)  = & -2 \int_0^t \frac{1}{|\lambda(s)|^2} \int_{0}^s \frac{1}{|\lambda(r)|^2 \lambda(r) \Upsilon(r,s)} \frac{1}{1+4i(t-s) \eta(r,s)} e^{- \frac{\mu(r,s)}{1+4i \mu(r,s) (t-s)} |x|^2} \drm r \drm s \\
& + \int_0^t \frac{1}{\lambda(s)^2} \int_{0}^s \frac{1}{|\lambda(r)|^2 \overline{\lambda(r)}\overline{\Upsilon(r,s)}} \frac{1}{\sqrt{1+4i(t-s) \widetilde{\mu(r,s)}(r,s)}} e^{- \frac{\widetilde{\mu(r,s)}}{1+4i \widetilde{\mu(r,s)} (t-s)} |x|^2} \drm r \drm s
\end{align*}  
with $\Upsilon(r,s)=1+4i(s-r)\nu(r)$ as well as 
\[ \mu(r,s)= \frac{\nu(r)}{\Upsilon(r,s)} + \frac{1}{1+4s^2} \quad \text{and} \quad \widetilde{\mu(r,s)} = \frac{\overline{\nu(r)}}{\overline{\Upsilon(r,s)}} +\frac{1}{\lambda(s)}. \]
   
\medskip
   
We now fix $T=1$ and $a=1/6$, and we compute $(U_n(T))_{0 \leq n \leq 2}$ using the above formulas discretized by a rectangle rule with a precise stepsize $\tau_0 = 1.10^{-3}$ to get a reference solution. We then compute the numerical errors $e_n \coloneqq \|U_n(T) - \mathcal{U}_n^{J} \|_{L^2(\IR^2)}$ for $n=0$ and  $n=1,2$ as a function of the time step $\tau$ in respectively the left and right part of Figure 1. We well observe that the errors for instance for $\mathcal{U}_0^J$, $\mathcal{U}_0^{2,J}$ and $\mathcal{U}_0^{4,J}$ are negligible, that $\mathcal{U}_1^{1,J}$ and $\mathcal{U}_1^{2,J}$ are well first-order convergent towards $\nabla U_1(T)$ and $\Delta U_1(T)$, and that $\mathcal{U}_2^J$ well provides a second-order approximation of $\mathcal{U}_2$. On the other hand, as we reach very high precision, $\mathcal{U}_1^J$ achieves an in-between second-order and third-order approximation of $U_1(T)$. Note that we do not investigate the first-order convergence of $U_3$, as it relies solely on a left-point rectangle rule and involves increasingly intricate Gaussian computations, making both the computations and simulations significantly more demanding.

\begin{figure}[h]
	\centering
	\captionsetup{width=0.75\textwidth}
		\includegraphics[width=0.80\textwidth,trim = 15cm 10cm 15cm 10cm, clip]{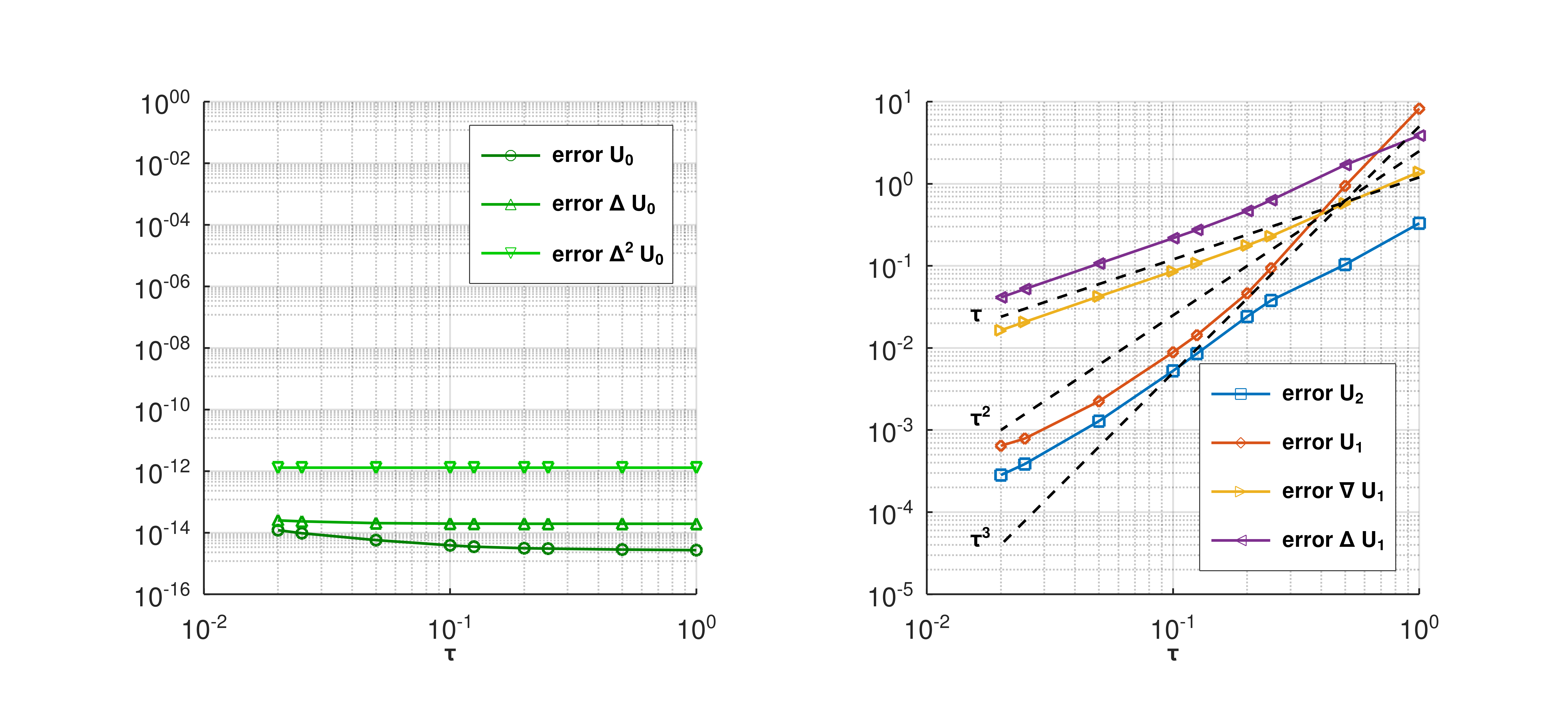}	
	\caption{ Log-log plot of the convergence of the $L^2$-numerical errors of $\mathcal{U}_0^J$ and derivatives (left) and $\mathcal{U}_1^J$ and $\mathcal{U}_2^J$ as well as derivatives (right) as time step $\tau \rightarrow 0$, with fixed final time $T=0.1$}
	\label{fig:comparison_gaussian_cubic_NTS.png}
\end{figure}
   
\subsubsection{Convergence in the weakly nonlinear regime for cubic 2D NLS}

We now turn to the error in $L^2$-norm produced by \eqref{NTS} scheme as $\eps$ goes to 0. Once again the reference solution is computed using a highly accurate Strang splitting approximation (with $\tau_1=1.10^{-4}$). All computations are performed with final time $T=1$ and space constant $a=1/3$. In Figure \ref{fig: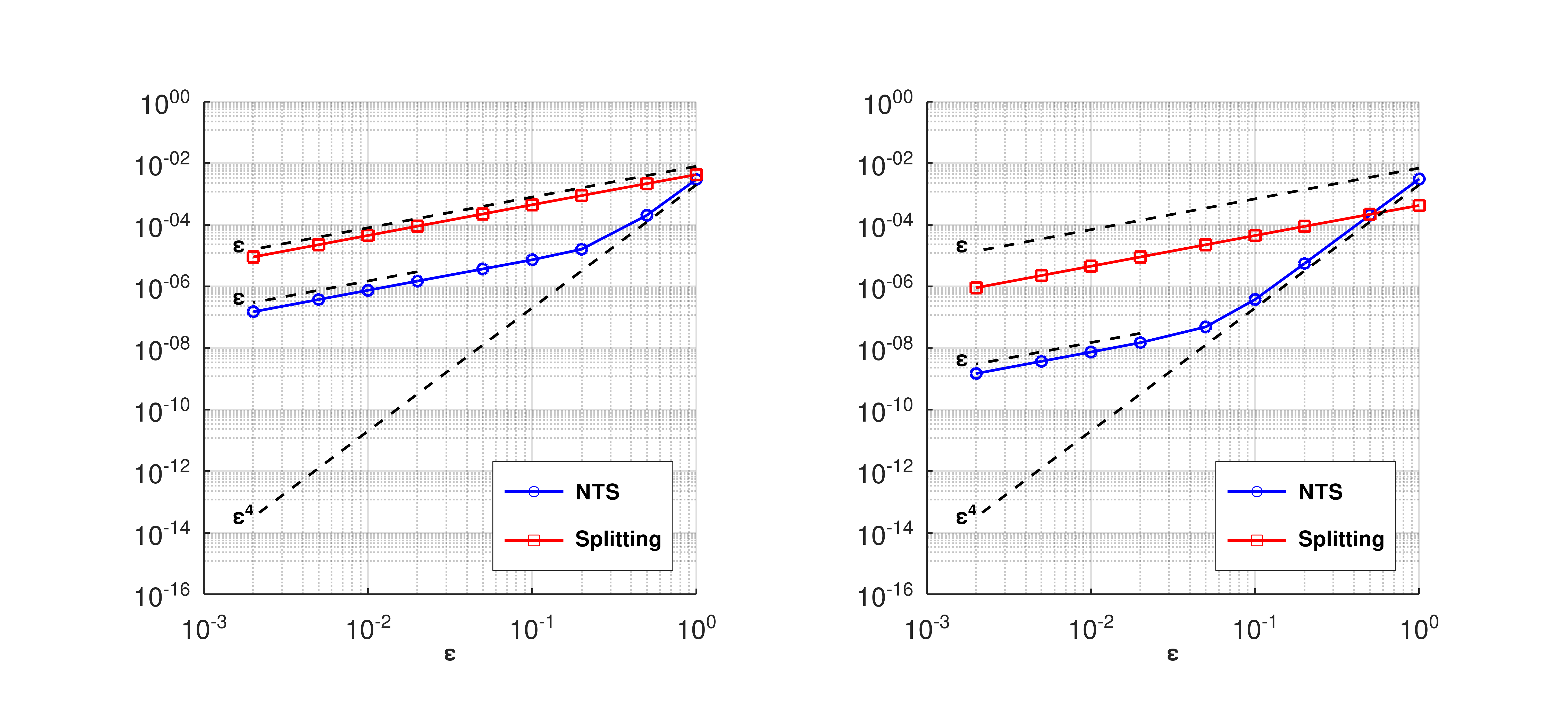}, we compute the errors 
\[ E_{NTS}(\eps)=\| u_{ST}^J-\mathcal{U}_0^J - \eps \mathcal{U}_1^J - \eps^2 \mathcal{U}_2^J - \eps^3 \mathcal{U}_3^J  \|_{L^2_x}, \]
 evaluated as $\eps$ varies from $1$ to $1.10^{-4}$. In the left panel, the time step $\tau=0.01$ is used, while in the right panel, a smaller time step $\tau=0.001$ is employed. As expected, \eqref{NTS} scheme exhibits fourth-order accuracy until the CFL condition $\tau \geq \eps$, which well confirms the result from Theorem~\ref{theorem_convergence_NTS}.
 
 \begin{figure}[h]
	\centering
	\captionsetup{width=0.75\textwidth}
		\includegraphics[width=0.80\textwidth,trim = 15cm 10cm 15cm 10cm, clip]{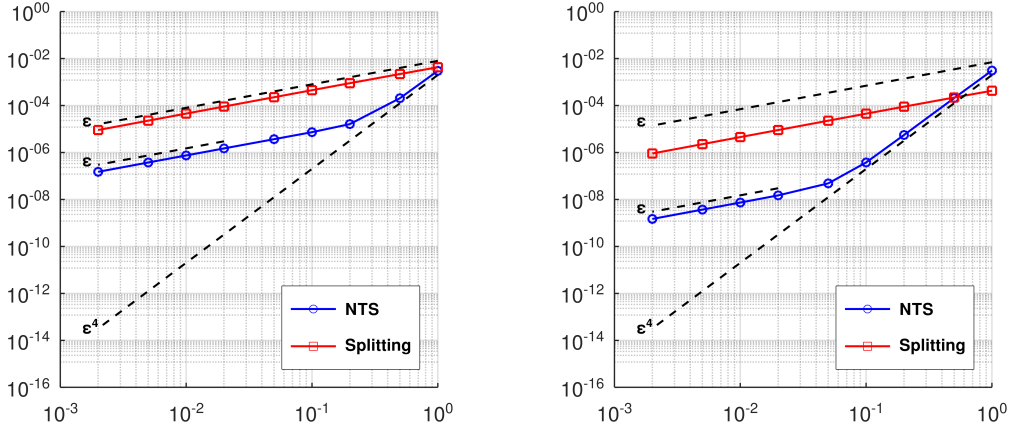}	
	\caption{Log-log plot of the convergence of the $L^2$-numerical errors for the splitting scheme $E_{LS}(\eps)$ and the multiscale scheme $E_{NTS}(\eps)$ as $\eps \rightarrow 0$, with fixed final time $T=1$ and time step $\tau=0.01$ (left panel) or $\tau=0.001$ (right panel).}
	\label{fig:NTSorder4.png}
\end{figure}
   
\section{Application to Wave Turbulence} \label{sec:wave_turbulence}

We conclude this paper by illustrating our numerical scheme within the physical framework that originally motivated our work, which is the theory of wave turbulence. We briefly recall the core concepts of the theory, focusing specifically on the two-dimensional cubic Schrödinger equation throughout this section, that is equation \eqref{NLS} with $p=3$ and $d=2$.

Rather than dealing with the full space $\IR^2$, we write our formal analysis on the large torus~$\IT^2_L \coloneqq L\mathbb{T}^2$ for some $L > 0$, which aligns with the numerical necessity of working in a bounded spatial domain. The initial data is assumed to satisfy the random phase (RP) condition
\[ \varphi(x)=\sum_{k \in \IZ^2_L} \widehat{\varphi}_k e^{ik\cdot x}, \quad \widehat{\varphi}_k=\sqrt{\phi(k)} e^{i \theta_k}  \]
where $\theta_k$ are independent variables, uniformly distributed on $\left[ 0, 2\pi \right]$ and $\phi$ is a regular function with fast decrease as $|k|\rightarrow + \infty$. We denote by
\[  u_k \coloneqq \widehat{u}(k)= \frac{1}{(2\pi L)^2} \int_{\IT^2_L} u(x) e^{-ik\cdot x} \drm x  \]
the $k$-th Fourier coefficient of $u$ with $k \in \IZ^2_L \coloneqq \frac{1}{L} \IZ^2$. Filtering by the linear Schrödinger flow~$v_k(t)=e^{it|k|^2}u_k(t)$, the unknowns $(v_k)_{k \in \IZ^2_L}$ then satisfy the following set of coupled nonlinear ODEs
\[  i\partial_t v_k = \eps \sum_{k+k_2=k_1+k_3} v_{k_1} \overline{v_{k_2}} v_{k_3} e^{-i t (|k|^2-|k_1|^2+|k_2|^2-|k_3|^2)}.\]

Wave turbulence theory predicts that in the large-volume limit $L \to \infty$ and the weakly nonlinear regime $\varepsilon \to 0$, the statistical behavior of the mean wave action density $n_k = \mathbb{E}[|v_k|^2]$ is governed, over a characteristic time scale known as the \textit{wave kinetic time} $T \sim \varepsilon^{-2}$, by the so-called \emph{wave kinetic equation} 

\[ \partial_t n_k = \int_{\substack{k+k_2=k_1+k_3 \\ |k|^2+|k_2|^2=|k_1|^2+|k_3|^2}}  n_{k} n_{k_1} n_{k_2} n_{k_3}  \left( \frac{1}{n_k} + \frac{1}{n_{k_2}}- \frac{1}{n_{k_1}} - \frac{1}{n_{k_3}}\right) \drm k_1 \drm k_2 \drm k_3 . \]

Physically, the evolution of $n_k$ reflects how energy is redistributed across scales: a direct cascade refers to energy transfer toward high wavenumbers (corresponding to smaller spatial scales), while an inverse cascade refers to particles transfer toward low wavenumbers (corresponding to larger spatial scales). These cascades are universal features of turbulent dynamics. In the framework of Wave Turbulence theory, they are characterized by the emergence of Kolmogorov-Zakharov (KZ) solutions, which are particular stationary solutions of the wave kinetic equation and take the form of power-law spectra
\[  n_k=k^{-\gamma} \]
at least within a certain range of frequencies known as the inertial range. Note that the exponent $\gamma$ encodes universal aspects of the system, depending only on its dimension and nonlinearity. In the case of the two-dimensional cubic Schrödinger equation, the theory predicts KZ spectra of the form
\[ n_k=k^{-\frac{4}{3}} \quad \text{(inverse cascade)} \quad  \text{and} \quad n_k=k^{-2} \quad \text{(direct cascade)}. \]
Returning to our perturbative expansion in powers of $\varepsilon$, the solution $u$ in frequency reads
\[ u_k=\widehat{U_0}(k)+\eps \widehat{U_1}(k)+\eps^2 \widehat{U_2}(k) + \ldots   \]
Taking products and expectations, we obtain
\[ \mathbb{E}[|u_k|^2]= \mathbb{E}[|\widehat{U_0}(k)|^2] + 2 \eps \Re  \mathbb{E}[\overline{\widehat{U_0}(k)} \widehat{U_1}(k)]   + \eps^2 \left( \mathbb{E}[|\widehat{U_1}(k)|^2] + 2 \Re \mathbb{E}[\overline{\widehat{U_0}(k)} \widehat{U_2}(k)]  \right) + \mathcal{O}(\eps^3)  . \]
The leading-order term in $\mathcal{O}(\varepsilon^0)$ corresponds to the linear dynamics, while the first-order contribution vanishes due to probabilistic cancellations. It is well known that nonlinear effects first arise at order~$\varepsilon^2$, marking the point where kinetic behavior becomes dominant and KZ-type solutions can emerge.
However, since our model includes neither forcing nor dissipation, we cannot expect KZ spectra to appear as stationary states (as in many physical setups), but rather as transient, intermediate states, as emphasized in \cite{Krstulovic2022} for the three-dimensional cubic Schrödinger equation.

We now present our numerical simulations. We fix the weak nonlinearity parameter at $\eps=0.1$, take $L=16\pi$, and discretize each spatial direction with $K=2^{10}$. The total simulation time is~$T=100$, with a time step $\tau=0.1$. Following the initial conditions used in \cite{Krstulovic2022}, we define the Fourier transform of the initial data as
\[ \phi(k) = \frac{1}{L^2 |k|^2} e^{-\frac{|k-k_s|^2}{\sigma^2}} \] 
with Gaussian mean $k_s=15$ and width $\sigma=1$ (note that we use the convention $|k|^2=1$ if $k=0$ in the above expression).

We use our \eqref{NQS} with $N=3$, computing $U_0$, $U_1$ and $U_2$. For a single realization, we plot in Figure~\ref{fig:longtimes.png} the evolution of the radial wave-action spectrum, averaged over frequency shells:
\[ n^{\mathrm{rad}}(t,\mathrm{k}) \coloneqq \frac{L}{2\pi} \sum_{k\in \Gamma_{\mathrm{k}}} \left[ |\widehat{U_1}(k)|^2 + 2 \Re \left( \overline{\widehat{U_0}(k)} \widehat{U_2}(k) \right) \right],   \]
where $\Gamma_{\mathrm{k}}$ denotes the circular shell of thickness $2\pi / L$ centered at frequency $\mathrm{k}$. The results suggest the emergence of a direct energy cascade, characterized by a power-law decay close to $\mathrm{k}^{-2}$ at high wavenumbers. Furthermore, we compute the $L^{\infty}_t L^2_x$ norm of the first-order term $\Re ( \overline{\widehat{U_0}} \widehat{U_1} )$ which is equal to $2.8028 \times 10^{-4}$, illustrating the expected cancellation at first order in the Picard expansion.

\begin{figure}[H]
	\centering
	\captionsetup{width=0.75\textwidth}
		\includegraphics[width=1\textwidth,trim = 0cm 0cm 0cm 0cm, clip]{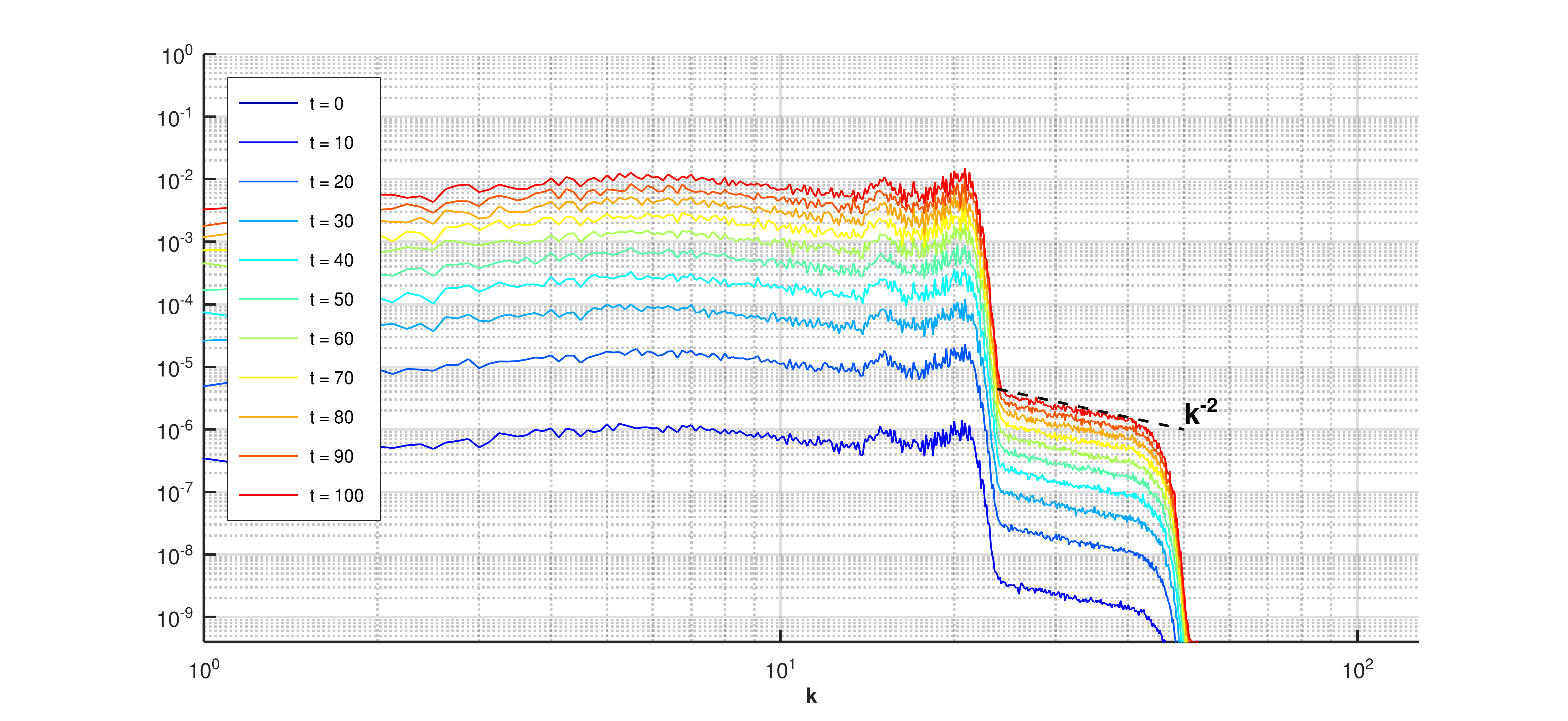}	
	\caption{Evolution of the wave spectra average $\mathrm{k} \mapsto n^{\mathrm{rad}}(t,\mathrm{k})$ for several times.}
	\label{fig:longtimes.png}
\end{figure}

\vspace{0.5cm}

\noindent \textcolor{gray}{$\bullet$} Q. Chauleur -- Univ. Lille, CNRS, Inria, UMR 8524 - Laboratoire Paul Painlevé, F-59000 Lille, France.\\
{\it E-mail}: quentin.chauleur@inria.fr

\noindent \textcolor{gray}{$\bullet$} A. Mouzard -- Modal’X - UMR CNRS 9023, Université Paris Nanterre, 92000 Nanterre, France.\\
{\it E-mail}: antoine.mouzard@math.cnrs.fr

\end{document}